\newtheorem{theorem}{Theorem}
\newtheorem{defi}{Definition}
\newtheorem{lemma}{Lemma}
\newtheorem{cor}{Corollary}
\newtheorem{prop}{Proposition}
\begin{document}
\title{\bf{Eigenspaces of Newforms with Nontrivial Character}}
\author{M. Karameris}
\date{}
\maketitle
\begin{abstract}
  Let $\mathcal{S}_{k}(\Gamma_0(N),\chi)$ denote the space of holomorphic cuspforms with Dirichlet character $\chi$ and modular subgroup $\Gamma_0(N)$. We will characterize the space of newforms $\mathcal{S}_{k}^{new}(\Gamma_0(N),\chi)$ as the intersection of eigenspaces of a particular family of Hecke operators, generalizing the work of Baruch-Purkait \cite{BarSom} to forms with non-trivial character. We achieve this by obtaining representation theoretic results in the $p$-adic case which we then de-adelize into relations of classical Hecke operators. 
  \end{abstract}
\section*{Introduction}
The aim of this work is to provide a link between the theory of new vectors described by Casselman and the classical theory of Atkin and Lehner for the space of cuspforms $\mathcal{S}_{k}(\Gamma_0(N),\chi)$.
 In \cite{BarSom1}, Baruch and Purkait worked with Loke and Savin's method \cite{LokSav} to characterize the space of integral weight newforms $S^{new}_{2k}(\Gamma_0(N))$ as the intersection of eigenspaces of Hecke operators that appear from adelizing a $p-$adic operator in the corresponding Hecke algebra. In particular for $p|N$ they found two families of Hecke operators and their conjugates under involution, which they denoted $Q_p,Q_p'$ and $S_{p^n,n-1},S_{p^n,n-1}'$ respectively, for which they obtained that:
\begin{theorem}[\textbf{Baruch-Purkait}]
    Consider the space of level $N=\prod\limits_{j=1}^np_j\prod\limits_{i=1}^m{q_i}^{a_i}$ cuspforms $\mathcal{S}_{k}(\Gamma_0(N))$. Then the space of newforms $\mathcal{S}_{k}^{new}(\Gamma_0(N))$ is the intersection of the $-1$ eigenspaces of the Hecke operators $Q_{p_j}, Q'_{p_j}$ and the $0$ eigenspaces of $S_{{q_i}^{a_i},a_i-1},S'_{{q_i}^{a_i},a_i-1}$ for all indices $i,j$ in the above product. 
\end{theorem}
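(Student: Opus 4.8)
The plan is to turn the statement into a family of local problems in the $p$-adic Hecke algebras, solve those with Casselman's new-vector theory together with the explicit structure of those algebras, and then de-adelize. I would begin globally: via strong approximation and the usual correspondence between classical cusp forms and automorphic representations of $\mathrm{GL}_2(\mathbb{A}_\mathbb{Q})$, the space $\mathcal{S}_k(\Gamma_0(N))$ decomposes — by multiplicity one — as $\bigoplus_\pi V_\pi$ over the cuspidal automorphic representations $\pi=\bigotimes_v\pi_v$ with $\pi_\infty$ the weight-$k$ discrete series and conductor dividing $N$, and $V_\pi\cong\bigotimes_{p\mid N}\pi_p^{K_0(p^{v_p(N)})}$ with the remaining local factors one-dimensional. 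Casselman's theory gives $\dim\pi_p^{K_0(p^n)}=n-c_p+1$ for $n\ge c_p:=c(\pi_p)$, and $\pi$ contributes to $\mathcal{S}_k^{new}(\Gamma_0(N))$ precisely when $c_p=v_p(N)$ for all $p\mid N$. Since classical Atkin--Lehner theory already identifies $\mathcal{S}_k^{new}(\Gamma_0(N))$ with $\bigcap_{p\mid N}\mathcal{S}_k(\Gamma_0(N))^{p\text{-new}}$, it is enough to detect each condition $c_p=v_p(N)$ by explicit operators in $\mathcal{H}_p:=\mathcal{H}\bigl(\mathrm{GL}_2(\mathbb{Q}_p),K_0(p^{v_p(N)})\bigr)$ acting on $\pi_p^{K_0(p^{v_p(N)})}$, uniformly in $\pi_p$.

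Next I would carry out the local computations, separately for $p\,\|\,N$ and for $p^a\,\|\,N$ with $a\ge 2$. For $p\,\|\,N$ the algebra $\mathcal{H}_p$ is the Iwahori--Hecke algebra, and among the $\pi_p$ that can occur at such a prime $\pi_p^{K_0(p)}$ is one-dimensional when $c_p=1$ ($\pi_p$ a possibly unramified-twisted Steinberg representation) and two-dimensional when $c_p=0$ ($\pi_p$ an unramified principal series). I would then exhibit $Q_p$ and its involutive conjugate $Q_p'$ so that both act by $-1$ on the one-dimensional new line, while on the two-dimensional Iwahori-fixed space of any unramified principal series the joint $(-1,-1)$-eigenspace is trivial — a $2\times 2$ linear-algebra check, using that the relevant eigenvalues are governed by the Satake parameters $\alpha,\alpha^{-1}$ of $\pi_p$, which for a local component of a cusp form are never $-1$. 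For $p^a\,\|\,N$ with $a\ge 2$ I would instead use $S_{p^a,a-1},S_{p^a,a-1}'$, built from the level-raising embeddings $\pi_p^{K_0(p^{a-1})}\hookrightarrow\pi_p^{K_0(p^a)}$: the new vector of a conductor-$a$ representation (where $\pi_p^{K_0(p^a)}$ is one-dimensional) is killed by both operators, whereas on $\pi_p^{K_0(p^a)}$ when $c_p\le a-1$ the joint kernel of the pair vanishes. All of this leans on Loke--Savin's \cite{LokSav} explicit presentation of $\mathcal{H}_p$ and of the action of its generators on new- and oldvectors.

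Finally I would de-adelize: each double coset $K_0(p^n)gK_0(p^n)$ entering $Q_p$, $S_{p^n,n-1}$, or their conjugates matches, under the standard dictionary between local Hecke operators and classical double-coset operators on $\Gamma_0(N)\backslash\mathrm{GL}_2^+(\mathbb{R})$, a concrete Hecke operator on $\mathcal{S}_k(\Gamma_0(N))$, and one checks this translation respects the decomposition $\bigoplus_\pi V_\pi$. Putting the three steps together: $f\in\mathcal{S}_k(\Gamma_0(N))$ is new iff its image in every local factor is a new vector, iff, by the local computation, $Q_{p_j}f=Q_{p_j}'f=-f$ for every $j$ and $S_{q_i^{a_i},a_i-1}f=S_{q_i^{a_i},a_i-1}'f=0$ for every $i$ — which is exactly the asserted characterization.

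The hard part will be the local step. The algebras $\mathcal{H}_p$ are noncommutative and infinite-dimensional once $n\ge 1$, so selecting the right elements $Q_p$ and $S_{p^n,n-1}$ and computing their action on the oldspaces takes real work; and the delicate point is verifying that the eigenvalue conditions genuinely separate old from new in \emph{every} case — unramified principal series (including those near the complementary-series range), twists of Steinberg, and supercuspidals once $a\ge 2$ — with no accidental Satake-parameter coincidence creating a spurious $-1$ eigenvalue, or a $0$ eigenvalue of $S_{p^a,a-1}$ on the oldspace. This is precisely why a single operator does not suffice: its $-1$- (respectively $0$-) eigenspace can meet the oldspace, and it is the further intersection with the eigenspace of the involuted operator that forces newness.
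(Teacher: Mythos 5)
Your overall architecture --- analyze the local Hecke algebra $\mathcal{H}(K/\!\!/K_0(p^{v_p(N)}))$ at each $p\mid N$, identify the operators there, and de-adelize --- is the same as the paper's (which follows Baruch--Purkait), and your treatment of the easy direction (newforms land in the $-1$, resp.\ $0$, eigenspaces) is consistent with how the paper does it via Li's pseudo-eigenvalue formula and the level-lowering property of $S_{p^n,n-1}$. The genuine divergence, and the genuine gap, is in the converse direction: showing that the joint eigenspace contains no oldforms. You propose to do this purely locally, by decomposing $\mathcal{S}_k(\Gamma_0(N))=\bigoplus_\pi V_\pi$ and checking on each old local component $\pi_p^{K_0(p^n)}$ that the joint $(-1,-1)$-eigenspace of $Q_p,Q_p'$ (resp.\ the joint kernel of $S,S'$) vanishes, ``using that the relevant eigenvalues are governed by the Satake parameters $\alpha,\alpha^{-1}$, which are never $-1$.'' This is exactly the step you flag as delicate, and as written it does not close. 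The relations $(Q_p+1)(Q_p-p)=0$ and $S(S-p)=0$ are algebra identities, so on the two-dimensional old component each of $Q_p$ and $Q_p'$ has a one-dimensional $(-1)$-eigenline independently of any Satake data; what must be ruled out is that these two lines \emph{coincide}, and the condition for that is not ``$\alpha\ne -1$'' (which, in the unitary normalization, is the equality case of Ramanujan--Petersson and is not something you can simply invoke) but a specific non-degeneracy coming out of the $2\times2$ matrix computation that you have not performed. For $p^a\,\|\,N$ with $c_p\le a-1$ the old component has dimension $a-c_p+1\ge 2$ and the analogous coincidence of kernels must be excluded case by case across principal series, twisted Steinberg, and supercuspidal $\pi_p$; none of this is carried out.

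The paper avoids this entirely by a global orthogonality argument that you should adopt: the operators $Q_p$, $Q_p'$, $S_{p^n,n-1}$, $S_{p^n,n-1}'$ are Hermitian for the Petersson product (Propositions \ref{herm} and its analogue in Section \ref{GenLev}); the $p$-eigenspace of $Q_p$ contains $\mathcal{S}_k(\Gamma_0(N/p))$ (Lemma \ref{lem2}) and the $p$-eigenspace of $Q_p'$ contains $V(p)\mathcal{S}_k(\Gamma_0(N/p))$ (Corollary \ref{corol}), and likewise for the $p^{n-r}$-eigenspaces of $S$ and $S'$. Hence any $f$ in the joint $(-1)$-eigenspaces (resp.\ joint kernels) is orthogonal to $\mathcal{S}_k(\Gamma_0(N/p))\oplus V(p)\mathcal{S}_k(\Gamma_0(N/p))$ for every $p\mid N$, i.e.\ to the whole oldspace, so it is new. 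This replaces your representation-by-representation linear algebra with two explicit computations on classical oldforms plus self-adjointness, and it is where the actual content of the converse lives; without it (or a completed Satake computation with the correct non-coincidence criterion), your proof is incomplete at precisely its load-bearing step.
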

In the process, Baruch and Purkait described the structure of a Hecke algebra of compactly supported, bi-invariant functions on $GL_2(\mathbb{Z}_p)$ with respect to an open compact subgroup $K_0(p^n)$ defined below. In particular they use this algebra to determine the finite dimensional representations of $K$ containg $K_0(p^n)$-fixed vectors and explicitly describe these vectors and the action of the Hecke algebra on them. This algebra contains a unique $K_0(p^n)$-fixed vector in the sense of Casselman \cite{CassLehn} and there is an operator whose $0$ eigenspace contains only this eigenvector. Adelizing this operator, Baruch and Purkait find a new Hecke operator not previously known in the classical Atkin-Lehner theory.
\par 
We generalize the results in \cite{BarSom} to the case of non-trivial character forms $\mathcal{S}_k(\Gamma_0(N),\chi)$. We use a similar approach but work with Hecke algebras that are $\chi$ bi-invariant (see subsection \ref{TwistHecke}) to obtain families of classical Hecke operators whose common eigenspaces characterize the space $\mathcal{S}^{new}_k(\Gamma_0(N),\chi)$. The operators Baruch and Purkait used are special cases of these Hecke operators for $\chi\equiv 1$. 
\par 
In section \ref{MainRes} we introduce the notation and state the main result. In section \ref{HeckeAlg} we first recall some known results on the Iwahori Hecke algebra and then exhibit the structure of the twisted Hecke subalgebras for higher level Iwahori subgroups of $GL_n(\mathbb{Z}_p)$. We use this structure to describe the irreducible representations of $K$ with $\chi$-equivariant vectors and determine the action of the Hecke algebra on these vectors. Section \ref{AdeltoCl} is where we de-adelize the $p$-adic operators we obtained from the previous section into classical operators. In section \ref{Sqfree} we use these operators to prove the main theorem for square free level forms and in section \ref{GenLev} we obtain the main theorem.
 \vspace{1cm}
 \section*{Acknowledgements}
My heartfelt thanks go to my advisor, Moshe Baruch, for not only introducing me to this problem but also for our extensive discussions that made this work possible.
 \newpage 
\section{Main Result} \label{MainRes}
For $N\in\mathbb{N}$, let $\chi$ be a Dirichlet character $\mod N$. Let $N=p_1^{n_1}\dots p_m^{n_m}$ be the prime decomposition of $N$. Then $\chi$ can be decomposed into a product of Dirichlet characters $\chi=\chi^{(p_1^{n_1})}\dots\chi^{(p_m^{n_m})}$ (see \cite[p. 44]{Gold}). Throughout the paper we assume that either $\chi(-1)=-1$ or that the weight $k$ is even so that $\mathcal{S}_{k}(\Gamma_0(N),\chi)$ is non trivial.
For any matrix $A=\begin{pmatrix}
    a & b \\
    c & d
\end{pmatrix}\in GL_2(\mathbb{R})$ with positive determinant and $f$ a complex valued function define the slash operator: 
\[f|A(z)=(ad-bc)^{\frac{k}{2}}(cz+d)^{-k}f(\frac{az+b}{cz+d}).\]
For any matrix $A=\begin{pmatrix}
    a & b \\
    c & d
\end{pmatrix}\in \Gamma_0(N)$ the functions $f\in\mathcal{S}_{k}(\Gamma_0(N),\chi)$ satisfy the following defining relation: 
\[f|A(z)=(ad-bc)^{\frac{k}{2}}(cz+d)^{-k}f(\frac{az+b}{cz+d})=\chi(d)f(z).\]
For a prime $p$ such that $p^n\parallel N$ (i.e. $p^n|N$ but $p^{n+1}\nmid N$) and $N=p^nM$ consider the well known Atkin-Lehner-Li operators: 
\[\text{the Atkin-Lehner involution: }W_{p^n}(f)(z)=f|\begin{pmatrix}
    p^n\beta & 1 \\
    N\gamma & p^n
\end{pmatrix}(z) \]  with $\beta\in\mathbb{Z}$ such that $p^{2n}\beta-N\gamma=p^n$, 
\[\text{the operator }U_p(f)(z)=p^{\frac{k}{2}-1}\sum_{s=0}^{p-1}f(\frac{z+s}{p}),\] and its normalization as it appears in \cite[§0.]{Li-Atkin}:
\[\tilde{U}_p(f)(z)=p^{1-k}U_p.\] 
The operators $U_p$ play a central part in the study of modular forms since they provide the Fourier series coefficients of $f$ in the case $p|N$ (see \cite[Theorem 3]{Atkin-Lehner}).
We also define the new operators, a trivial character version of which appears in \cite{BarSom} and also in the proof of \cite[Theorem 2.1]{Li-Atkin}:
\[Q_p=\bar{\chi}^{(M)}(p)\tilde{U}_pW_p= \bar{\chi}^{(M)}(p)p^{1-\frac{k}{2}}U_pW_p, \text{ for } p\parallel N\]
\[\text{and its conjugate }Q_p'=W_pQ_pW_p^{-1},\text{ for } p\parallel N\] as well as the twist of an operator first considered by Baruch and Purkait \cite{BarSom}:
\[S_{p^n,n-1}(f)(z)=f+\sum_{s\in(\mathbb{Z}/p\mathbb{Z})^*}\overline{\chi(A_s)}f|A_s(z),\] where $A_s=\begin{pmatrix}
    a_s & b_s \\
    N/p & p-sN/p^n
\end{pmatrix}$
with $a_s,b_s$ integers such that $det(A_s)=1$, 
\[\text{and its conjugate }S_{p^n,n-1}'=W_{p^n}S_{p^n,n-1}W_{p^n}^{-1}.\] 
Using these operators we obtain the following result:
\begin{theorem} \label{mainth}
    Consider the space $\mathcal{S}_{k}(\Gamma_0(N),\chi)$ where $N=\prod\limits_{j=1}^{m_1}p_j\prod\limits_{i=1}^{m_2}{q_i}^{n_i}$ with $\prod\limits_{j=1}^{m_1}p_j$ being the squarefree part of $N$ and write $\chi=\prod\limits_{j=1}^{m_1}\chi^{(p_j)}\prod\limits_{i=1}^{m_2}\chi^{({q_i}^{n_i})}$. Then $\mathcal{S}_{k}^{new}(\Gamma_0(N),\chi)$ is the intersection of the $-1$ eigenspaces of the Hecke operators $Q_{p_j}, Q'_{p_j}$ and the $0$ eigenspaces of the operators $S_{{q_i}^{n_i},n_i-1},S'_{{q_i}^{n_i},n_i-1}$ for every $i,j$ in the above product for which $\chi^{(p_j)}\equiv 1$ or $\chi^{({q_i}^{n_i})}$ is imprimitive. 
\end{theorem}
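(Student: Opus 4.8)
The plan is to mimic the Baruch--Purkait strategy but carry the nontrivial character through every step, and to reduce immediately to the case of prime power level by a standard ``local'' decomposition. Concretely, I would first recall that $\mathcal{S}_k(\Gamma_0(N),\chi)$ decomposes as a tensor product (over the oldform-raising maps) of the spaces attached to each prime $p_j^{n_j}\parallel N$, and that a form is new at $N$ iff it is new at each prime dividing $N$. This lets me fix one prime $p$ with $p^n\parallel N$ and work entirely with the local component: the relevant Hecke algebra is the $\chi^{(p^n)}$-twisted convolution algebra $\mathcal{H}(K,\chi_p)$ of compactly supported functions on $K=GL_2(\mathbb{Z}_p)$ that transform by $\chi_p$ on the left and right under $K_0(p^n)$, as set up in Section~\ref{HeckeAlg}. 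The adelic translation (Section~\ref{AdeltoCl}) identifies the classical operators $U_p$, $W_{p^n}$, $Q_p$, $Q_p'$, $S_{p^n,n-1}$, $S_{p^n,n-1}'$ with specific elements of (products of local copies of) this algebra, so the theorem becomes a purely local statement: a $\chi_p$-equivariant vector $v$ in an irreducible admissible representation $(\pi,V)$ of $GL_2(\mathbb{Q}_p)$ lies in the ``new'' line (the Casselman new vector, or its span) iff it is killed/negated by the appropriate local operators.

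Next I would run the local classification. Using the structure of $\mathcal{H}(K,\chi_p)$ obtained earlier, I enumerate the irreducible representations of $K$ that contain a $K_0(p^n)$-fixed, $\chi_p$-equivariant vector and write down the action of the generators on the (finite-dimensional) space of such vectors. For the squarefree local case $p\parallel N$ this space is at most two-dimensional, spanned by an old vector and a new vector, and one computes directly that $Q_p$ acts with eigenvalues $\pm1$, with the $-1$-eigenspace exactly the new line, provided $\chi^{(p)}\equiv1$ (if $\chi^{(p)}$ is ramified there is already a unique new vector by Casselman's theory and nothing to impose); conjugating by $W_p$ gives the statement for $Q_p'$ and one intersects. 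For $p^n\parallel N$ with $n\geq2$, the same analysis shows $S_{p^n,n-1}$ — which up to the $\bar\chi(A_s)$ normalization is essentially an averaging projector over $(\mathbb{Z}/p\mathbb{Z})^*$ — has $0$-eigenspace precisely the span of the new vector among the $\chi_p$-equivariant vectors, again exactly when $\chi^{(p^n)}$ is imprimitive (when it is primitive Casselman's new vector is already the unique such vector of minimal level and the operator is not needed), and $S_{p^n,n-1}'$ handles the ``other side''. Intersecting over all $i,j$ and reassembling via the tensor decomposition yields $\mathcal{S}_k^{new}(\Gamma_0(N),\chi)$.

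The main obstacle I expect is the bookkeeping of characters in the twisted Hecke algebra: one must verify that the $\bar\chi^{(M)}(p)$ and $\overline{\chi(A_s)}$ normalization factors in the definitions of $Q_p$ and $S_{p^n,n-1}$ are exactly what is needed to make the de-adelized operators self-adjoint (hence diagonalizable) with the predicted integer eigenvalues $\{\pm1\}$ and $\{0,\ast\}$, and that $W_{p^n}$ intertwines the two families correctly in the presence of the character. A secondary technical point is checking that the local space of $\chi_p$-equivariant $K_0(p^n)$-vectors is spanned by oldform images of lower-level new vectors together with the genuine new vector, so that ``lies in the bad eigenspace'' really does cut out the new line and nothing spurious; this is where the hypothesis ``$\chi^{(p_j)}\equiv1$ or $\chi^{(q_i^{n_i})}$ imprimitive'' enters, since outside that case Casselman's theory already pins down the new vector uniquely and the extra operator imposes no new condition. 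Everything else is routine: matrix identities for the coset representatives $A_s$ and $\begin{pmatrix} p^n\beta & 1 \\ N\gamma & p^n \end{pmatrix}$, compatibility of the slash action with the adelic convolution, and linear algebra on finite-dimensional eigenspaces.
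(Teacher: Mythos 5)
Your overall strategy (reduce to a purely local statement at each prime and classify the $\chi_p$-equivariant vectors in each local representation) is genuinely different from the paper's. After constructing the operators via the twisted Hecke algebras, the paper argues globally: it shows $Q_p$, $Q_p'$, $S_{p^n,n-1}$, $S'_{p^n,n-1}$ are Hermitian for the Petersson inner product, computes their action on newforms (via Li's pseudo-eigenvalue formula for $W_p$, and via the fact that $S_{p^n,r}$ maps into $\mathcal{S}_k(\Gamma_0(p^rM),\chi)$ and commutes with the $T_q$) and on the two old components $\mathcal{S}_k(\Gamma_0(N/p),\chi)$ and $V(p)\mathcal{S}_k(\Gamma_0(N/p),\chi)$, and concludes by orthogonality. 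A local proof along your lines is plausible in principle, but as written it has a genuine gap at its central step.

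The gap: you assert that $Q_p$ acts on the (at most two-dimensional) local old space ``with eigenvalues $\pm1$, with the $-1$-eigenspace exactly the new line.'' Both halves are incorrect. The operator satisfies $(Q_p+1)(Q_p-p)=0$, so its eigenvalues are $-1$ and $p$, not $\pm1$; and on the two-dimensional space of Iwahori-fixed vectors of an unramified local component it generically has one eigenvector for each eigenvalue, so its $-1$-eigenspace contains a nonzero \emph{old} vector. The whole content of the theorem is that no old vector lies simultaneously in the $-1$-eigenspaces of $Q_p$ and $Q_p'$ (resp.\ the $0$-eigenspaces of $S_{p^n,n-1}$ and $S'_{p^n,n-1}$), and your sketch supplies no argument for this. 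You would need either to compute the $2\times2$ matrices of both operators on the local old space and check that their bad eigenspaces intersect trivially, or to reproduce the paper's substitute: the operators are Hermitian, $\mathcal{S}_k(\Gamma_0(N/p),\chi)$ lies in the $p$-eigenspace of $Q_p$ and $V(p)\mathcal{S}_k(\Gamma_0(N/p),\chi)$ in that of $Q_p'$, hence a common $-1$-eigenvector is orthogonal to the entire old space. The same issue recurs verbatim for $S_{p^n,n-1}$, whose $0$-eigenspace alone is not the new line. A secondary imprecision: $\mathcal{S}_k(\Gamma_0(N),\chi)$ is not literally a tensor product of spaces attached to the primes dividing $N$; the factorization is valid only representation by representation after decomposing into cuspidal automorphic representations, so your reduction to prime-power level needs that intermediate step (the paper avoids it altogether by treating the oldspace decomposition one prime at a time).
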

\section{Twisted p-adic Hecke algebras and representations of $GL_2(\mathbb{Z}_p)$} \label{HeckeAlg}
Let $\chi_p:\mathbb{Q}_p^{\times}\to\mathbb{C}^{\times}$ be a $p-$adic Hecke character. In this section we will explicitly describe the structure of the Hecke algebras of $\chi_p$-bi-invariant functions on $K=GL_2(\mathbb{Z}_p)$. We will use these results to obtain a classification of the smooth irreducible finite dimensional representations of $K$ which have $\chi_p$ equivariant vectors. 
\subsection{The Iwahori Hecke Algebra}
Let $G=GL_2(\mathbb{Q}_p)$ and consider the family of subgroups $K_0(p^n)=\{\begin{pmatrix}
    a & b \\
    c & d
\end{pmatrix}\in K:c\in p^n\mathbb{Z}_p\}$. Let $\chi_p$ be a $p-$adic character, $\chi_p:\mathbb{Q}_p^{\times}\to\mathbb{C}^{\times}$ of conductor $1+p^r\mathbb{Z}_p$ with $r\leq n$. Abusing notation we extend $\chi_p:K_0(p^n)\to\mathbb{C}^{\times}$ as $\chi_p(\begin{pmatrix}
    a & b \\
    c & d
\end{pmatrix})=\chi_p(d)$.
We first consider the case of the Iwahori Hecke algebra $$\mathcal{H}(G/\!\!/K_0(p),\chi_p)=\{f:G\to\mathbb{C}|f(kxk')=\chi_p(kk')f(x),\forall x\in G, k,k'\in K_0(p)\}$$ where $\chi_p$ is a $p-$adic character of conductor at most $1+p\mathbb{Z}_p$. It is obvious that if $\chi_p\equiv 1$ then we get the full Iwahori Hecke algebra $\mathcal{H}(G/\!\!/K_0(p))$. We define the elements $d(p^n)=\begin{pmatrix}
    p^n & 0 \\
    0 & 1
\end{pmatrix}, w(p^n)=\begin{pmatrix}
    0 & -1 \\
    p^n & 0
\end{pmatrix}, z(p^n)=\begin{pmatrix}
    p^n & 0 \\
    0 & p^n
\end{pmatrix},  x(t)=\begin{pmatrix}
    1 & t \\
    0 & 1
\end{pmatrix},  y(s)=\begin{pmatrix}
    1 & 0 \\
    s & 1
\end{pmatrix}$. For any $g\in G$ we denote by $X_g$ the characteristic function of the double coset $KgK$ and define the Hecke Algebra operators: $$\mathcal{T}_n=X_{d(p^n)}, \mathcal{U}_n=X_{w(p^n)},\mathcal{Z}=X_{z(p)}.$$ The following description of the Iwahori Hecke algebra is then well known.
\begin{theorem} \label{struct} (see \cite[Theorem 5]{BarSom})
The Hecke algebra $\mathcal{H}(G/\!\!/K_0(p),1)$ is generated by $\mathcal{U}_0, \mathcal{U}_1$ and $\mathcal{Z}$ with relations:
	\begin{enumerate}
		\item $\mathcal{U}_1^2=\mathcal{Z}$
		\item $(\mathcal{U}_0+1)(\mathcal{U}_0-p)=0$
		\item $\mathcal{Z}$ commutes with $\mathcal{U}_0,\mathcal{U}_1$
	\end{enumerate}
\end{theorem}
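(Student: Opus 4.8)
\emph{Proof strategy.} The plan is to deduce everything from explicit arithmetic of double cosets of the Iwahori subgroup $I=K_0(p)$ in $G=GL_2(\mathbb{Q}_p)$. First I would invoke the Iwahori--Bruhat decomposition $G=\coprod_{w\in\widetilde{W}}IwI$, where $\widetilde{W}$ is the extended affine Weyl group of $GL_2$; concretely one takes as representatives of $\widetilde{W}$ the monomial matrices $\begin{pmatrix}p^{a}&0\\0&p^{b}\end{pmatrix}$ and $\begin{pmatrix}0&p^{a}\\p^{b}&0\end{pmatrix}$, which is verified directly by performing the $\mathbb{Z}_p$-elementary row and column operations coming from $I$ on either side of an arbitrary element of $G$. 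This shows that $\{X_w:w\in\widetilde{W}\}$ is a $\mathbb{C}$-basis of $\mathcal{H}(G/\!\!/K_0(p),1)$. I would then record the structure $\widetilde{W}=W_{\mathrm{aff}}\rtimes\Omega$, with $W_{\mathrm{aff}}$ the infinite dihedral group of type $\widetilde{A}_1$ generated by the finite reflection $s$ (represented by $w(1)$) and the affine reflection $s'=w(p)\,s\,w(p)^{-1}$, and $\Omega\cong\mathbb{Z}$; one checks that $w(p)=s\cdot\operatorname{diag}(p,1)$ normalises $I$ and represents a generator of the length-zero subgroup $\Omega$, and that $w(p)^2=-pI$ represents the central element $z(p)$. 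Because conjugation by $w(p)$ interchanges $s$ and $s'$, already $w(1),w(p)$ generate $\widetilde{W}$; combined with the standard fact that $X_v*X_w=X_{vw}$ whenever lengths add, this shows that $\mathcal{U}_0=X_{w(1)}$, $\mathcal{U}_1=X_{w(p)}$ (invertible, since $w(p)$ normalises $I$) and $\mathcal{Z}=X_{z(p)}$ generate the algebra.

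Next I would verify the three relations by direct coset computation. Relation~(3) is immediate: $z(p)=pI$ is central in $G$, so $\mathcal{Z}$ is convolution by the characteristic function of a single central coset and therefore commutes with every $X_g$. For relation~(1): since $w(p)$ normalises $I$, the double coset $Iw(p)I=w(p)I$ is a single right coset, so $\mathcal{U}_1*\mathcal{U}_1=X_{w(p)^2}=X_{-pI}$, and as $-I\in K_0(p)$ is central we get $K_0(p)(-pI)K_0(p)=K_0(p)\,z(p)\,K_0(p)$, whence $\mathcal{U}_1^2=\mathcal{Z}$. For relation~(2): $w(1)\notin I$ and $s$ has length one, so $Iw(1)I$ splits into $p$ right cosets, which one may take to be $x(t)w(1)I$ for $t=0,\dots,p-1$; computing $w(1)x(t)w(1)=\begin{pmatrix}-1&0\\t&-1\end{pmatrix}$ one sees this lies in $I$ exactly when $p\mid t$ and in $Iw(1)I$ otherwise, and counting multiplicities in $\mathcal{U}_0*\mathcal{U}_0=\sum_{s,t}\big[x(s)w(1)x(t)w(1)\,I\big]$ gives $\mathcal{U}_0*\mathcal{U}_0=p\cdot\mathbf{1}+(p-1)\,\mathcal{U}_0$, i.e. $(\mathcal{U}_0+1)(\mathcal{U}_0-p)=0$ --- the usual Iwahori quadratic relation with parameter $q=p$.

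Finally I would show that these relations present the algebra, i.e. that the tautological surjection from the abstractly presented algebra $A$ (generators $U_0,U_1$, with $U_1$ invertible and $Z:=U_1^{2}$, subject to (2) and (3)) onto $\mathcal{H}(G/\!\!/K_0(p),1)$ is injective. The idea is to exhibit a spanning set of $A$ of the right size: using (3) to collect all powers of $Z$ to the front and introducing $U_0':=U_1U_0U_1^{-1}$ (which satisfies the same quadratic relation as $U_0$ by (2) and conjugation), every element of $A$ reduces to a $\mathbb{C}$-linear combination of $Z^{k}\cdot(\text{alternating word in }U_0,U_0')\cdot U_1^{\varepsilon}$ with $k\in\mathbb{Z}$ and $\varepsilon\in\{0,1\}$; this family is indexed by reduced dihedral words in $W_{\mathrm{aff}}$ together with the $\Omega$-coordinate, hence by $\widetilde{W}$. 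Since the image of such an element is the basis vector $X_w$ and distinct $w$ give distinct, hence linearly independent, double cosets, the family must be a basis of $A$ and the surjection an isomorphism. This last step --- matching the normal form in $A$ with the Iwahori--Bruhat basis and confirming that no collapse occurs --- is the real content and the main obstacle: it amounts to the Iwahori--Matsumoto presentation specialised to type $\widetilde{A}_1$ together with the $\Omega$-twist induced by $w(p)$, the one delicate point being that $w(p)^2$ is a nontrivial central element rather than the identity, so that $\mathcal{Z}$ genuinely enters the presentation.
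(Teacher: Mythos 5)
The paper offers no proof of this statement: it is quoted verbatim from Baruch--Purkait \cite[Theorem 5]{BarSom} and labelled ``well known,'' so there is nothing internal to compare against. Your argument is a correct, self-contained derivation of that cited result, and it follows the standard Iwahori--Matsumoto route. The three relations are verified correctly: $w(p)$ does normalise $K_0(p)$, $w(p)^2=-pI$ lies in the double coset of $z(p)$ because $-I\in K_0(p)$, and the coset count $Iw(1)I=\coprod_{t\bmod p}x(t)w(1)I$ together with $w(1)x(t)w(1)=\left(\begin{smallmatrix}-1&0\\ t&-1\end{smallmatrix}\right)$ yields $\mathcal{U}_0^2=p\cdot\mathbf{1}+(p-1)\mathcal{U}_0$, which is exactly $(\mathcal{U}_0+1)(\mathcal{U}_0-p)=0$. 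Your identification of $\Omega$ and the observation that $\omega^2$ is the central translation (so that $\mathcal{Z}=\mathcal{U}_1^2$ genuinely enters and $\widetilde{W}$ is parametrised by $z^{m}\cdot(\text{reduced dihedral word})\cdot\omega^{\varepsilon}$) is the right way to handle the $GL_2$ versus $SL_2$ subtlety. Two points you lean on without proof, both standard but worth flagging: the fact that $X_v*X_w=X_{vw}$ whenever lengths add (this is the BN-pair computation underlying the whole normal-form argument), and the implicit requirement that $\mathcal{U}_1$ (equivalently $\mathcal{Z}$) be declared invertible in the abstract presentation --- without that, relations (1)--(3) alone do not reach the double cosets of $z(p)^{-1}$ and the surjection you construct would not be onto. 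Since the paper relies on this theorem only as imported background, your proof adds content the paper does not contain rather than diverging from it.
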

We are primarily interested in $\mathcal{U}_0, \mathcal{U}_1, \mathcal{T}_1$ and specifically their relation:
$$\mathcal{U}_0=\mathcal{Z}^{-1}*\mathcal{T}_1*\mathcal{U}_1.$$

\subsection{The Subalgebra $\mathcal{H}(K/\!\!/K_0(p^n),\chi_p)$} \label{TwistHecke}
It is difficult and maybe impossible to obtain a full description of the Hecke algebra $\mathcal{H}(G/\!\!/K_0(p^n),\chi_p)$ for general $n$. 
Following Baruch and Purkait we consider the subalgebra:
$$\mathcal{H}(K/\!\!/K_0(p^n),\chi_p)=\{f:K\to\mathbb{C}|f(kxk')=\chi_p(kk')f(x),\forall x\in K, k,k'\in K_0(p^n)\}.$$
Baruch and Purkait obtained generators and relations for the subalgbera \\ $\mathcal{H}(K/\!\!/K_0(p^n),1)$ and used it to describe representations of $K$ having $K_0(p^n)$ fixed vectors which were first described by Casselman using a different method. We now proceed to the case where $\chi_{p} \not \equiv 1$ .
Let $\chi_p$ be a $p-$adic character of conductor $1+p^r\mathbb{Z}_p$ with $r\leq n$. Consider the elements $y(p^j)=\begin{pmatrix}
    1 & 0 \\
    p^j & 1
\end{pmatrix}$ and the corresponding functions:\\ $\mathcal{V}_j^{\chi_p}(x)= \begin{cases}
        \chi_p(k_1k_2) & \text{, if } x=k_1y(p^j)k_2\\
        0 & \text{, otherwise}
    \end{cases}$, where $k_1,k_2\in K_0(p^n)$ and \\ $\mathcal{U}_0^{\chi_p}(x)= \begin{cases}
        \chi_p(k_1k_2) & \text{, if } x=k_1w(1)k_2\\
        0 & \text{, otherwise}
    \end{cases}$ \\ \\
\par
By \cite[Lemma 1]{Cass} we have the following lemma:
 \begin{lemma}\label{doubcos}
 	The elements $w(1),y(p),...,y(p^{n-1}),y(p^n)$ form a complete set of double coset representatives for $K/\!\!/K_0(p^n)$. 
 \end{lemma}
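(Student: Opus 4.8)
The plan is to recover Casselman's count of double cosets by reducing it to a computation of orbits of the standard Borel subgroup of $GL_2(\mathbb{Z}/p^n\mathbb{Z})$ on $\mathbb{P}^1(\mathbb{Z}/p^n\mathbb{Z})$ or, what amounts to the same thing, by explicit row and column operations inside $K$. First I would note that reduction modulo $p^n$ gives a surjection $K\to GL_2(\mathbb{Z}/p^n\mathbb{Z})$ whose kernel lies in $K_0(p^n)$, and that $K_0(p^n)$ maps onto the subgroup $B$ of upper triangular matrices; sending $gK_0(p^n)$ to the first column of $g$ modulo $p^n$ then identifies $K/K_0(p^n)$ with $\mathbb{P}^1(\mathbb{Z}/p^n\mathbb{Z})$ and the double coset space $K_0(p^n)\backslash K/K_0(p^n)$ with the set of $B$-orbits on $\mathbb{P}^1(\mathbb{Z}/p^n\mathbb{Z})$. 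This already makes the answer plausible, the orbits being governed by the $p$-adic valuation of the bottom coordinate.

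To make this precise, and in particular to see that the $n+1$ listed representatives are pairwise inequivalent, I would exhibit a double-coset invariant. For $g=\begin{pmatrix}a&b\\c&d\end{pmatrix}\in K$ set $j(g)=\min\{v_p(c),n\}$. If $k\in K_0(p^n)$ then its $(2,1)$ entry lies in $p^n\mathbb{Z}_p$ while its $(1,1)$ and $(2,2)$ entries are units, so the lower-left entries of both $kg$ and $gk$ have the same valuation as $c$ when $v_p(c)<n$ and valuation $\ge n$ otherwise; hence $j$ is constant on double cosets. Since $w(1)$ has lower-left entry a unit and $y(p^i)$ has lower-left entry $p^i$, the elements $w(1),y(p),\dots,y(p^{n-1}),y(p^n)$ realize the values $j=0,1,\dots,n$ and therefore lie in pairwise distinct double cosets.

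It then remains to show every $g\in K$ lies in one of these cosets, which I would prove by elementary reductions using only unipotent upper triangular matrices $\begin{pmatrix}1&\ast\\0&1\end{pmatrix}$ and diagonal matrices with unit entries, all of which lie in $K_0(p^n)$ for every $n$. If $c$ is a unit, right-multiplying $g$ by a suitable $\begin{pmatrix}1&\ast\\0&1\end{pmatrix}$ clears the $(2,2)$ entry; invertibility of the determinant then forces the $(1,2)$ entry to be a unit, after which rescaling the columns and left-multiplying by a suitable $\begin{pmatrix}1&\ast\\0&1\end{pmatrix}$ clears the $(1,1)$ entry, and rescaling the rows brings $g$ to $w(1)$. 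If instead $c\in p\mathbb{Z}_p$, invertibility of the determinant forces $a$ and $d$ to be units; left-multiplying by $\begin{pmatrix}1&-b/d\\0&1\end{pmatrix}$ clears the $(1,2)$ entry, rescaling the rows yields $\begin{pmatrix}1&0\\c'&1\end{pmatrix}$ with $v_p(c')=v_p(c)$, and then if $v_p(c)\ge n$ this already lies in $K_0(p^n)$ (the coset of $y(p^n)$), while if $v_p(c)=i<n$ a conjugation by a diagonal unit matrix turns the corner entry into $p^i$, giving $y(p^i)$.

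The step I expect to require the most care is the bookkeeping, not the underlying idea: at each reduction one must check that the auxiliary matrix genuinely lies in $K_0(p^n)$ rather than merely in $K$, that the entries declared to be units really are units exactly when asserted (this is where $\det g\in\mathbb{Z}_p^\times$ is used repeatedly), and that the boundary case $v_p(c)\ge n$ collapses cleanly to the trivial double coset represented by $y(p^n)\in K_0(p^n)$. Once this is done, matching the list against the representatives in \cite{Cass} is immediate.
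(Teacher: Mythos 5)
Your proof is correct, but note that the paper does not actually prove this lemma at all: it simply cites Casselman \cite[Lemma 1]{Cass} and moves on. What you supply is a self-contained elementary argument, and both halves of it check out. The invariant $j(g)=\min\{v_p(c),n\}$ is indeed constant on double cosets, because for $k\in K_0(p^n)$ the lower-left entry of $kg$ (resp.\ $gk$) is $\gamma a+\delta c$ (resp.\ $\alpha c+\gamma d$) with $\gamma\in p^n\mathbb{Z}_p$ and $\alpha,\delta\in\mathbb{Z}_p^{\times}$, so its valuation agrees with $v_p(c)$ below $n$ and stays $\geq n$ above; this separates $w(1),y(p),\dots,y(p^n)$. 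Your reduction to these representatives uses only upper unipotent and diagonal unit matrices, which lie in $K_0(p^n)$ for every $n$, and the unit conditions you need ($b$-entry a unit after clearing the corner when $v_p(c)=0$; $a,d$ units when $p\mid c$) all follow from $\det g\in\mathbb{Z}_p^{\times}$ as you say. The opening identification of $K_0(p^n)\backslash K/K_0(p^n)$ with $B$-orbits on $\mathbb{P}^1(\mathbb{Z}/p^n\mathbb{Z})$ is only motivational and is not needed once the invariant-plus-reduction argument is in place. The trade-off is the usual one: the citation keeps the paper short and points to the general statement for $GL_2$ of a local field, while your version makes the section self-contained and makes visible exactly which subgroup elements (unipotents and diagonal units, all lying in every $K_0(p^n)$) effect the reduction --- a fact that is reused implicitly later when the paper manipulates the coset decompositions of Lemma \ref{decomp}.
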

 We say an element $g\in K$ is supported on $\mathcal{H}(K/\!\!/K_0(p^n),\chi_p)$ if the characteristic function $X_g^{\chi_p}(x)= \begin{cases}
        \chi_p(k_1k_2) & \text{, if } x=k_1gk_2\\
        0 & \text{, otherwise}
    \end{cases}$ where $k_1,k_2\in K_0(p^n)$, is well defined. We now examine which of the elements in Lemma \ref{doubcos} are supported on the algebra $\mathcal{H}(K/\!\!/K_0(p^n),\chi_p)$. The following lemma is well known:
 \begin{lemma} \label{supchar}
An element $g\in K$ is supported on $\mathcal{H}(K/\!\!/K_0(p^n),\chi_p)$ if and only if $\chi_p(gkg^{-1}k^{-1})=1, \forall k\in g^{-1}K_0(p^n)g\cap K_0(p^n)=K_g$.
 \end{lemma}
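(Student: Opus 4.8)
The plan is to unwind the defining property of the characteristic function $X_g^{\chi_p}$ into the stated commutator condition. As a preliminary I would record that the extension $\chi_p\colon K_0(p^n)\to\mathbb{C}^\times$, $\begin{pmatrix} a & b \\ c & d\end{pmatrix}\mapsto\chi_p(d)$, is a genuine homomorphism: for two elements of $K_0(p^n)$ the $(2,2)$-entry of the product is $cb'+dd'$, and since every element of $K_0(p^n)$ has unit determinant with $bc\in p^n\mathbb{Z}_p$ its diagonal entries are units, so $cb'+dd'\in dd'(1+p^n\mathbb{Z}_p)\subseteq dd'(1+p^r\mathbb{Z}_p)$ while $\chi_p$ kills $1+p^r\mathbb{Z}_p$. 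This is the only point at which the hypothesis $r\le n$ enters.

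Next I would restate ``$g$ is supported'' concretely: $X_g^{\chi_p}$ is well defined precisely when, for all $k_1,k_2,k_1',k_2'\in K_0(p^n)$, the equality $k_1gk_2=k_1'gk_2'$ forces $\chi_p(k_1k_2)=\chi_p(k_1'k_2')$. Introducing $a=(k_1')^{-1}k_1$ and $b=k_2(k_2')^{-1}$ and using multiplicativity of $\chi_p$, the hypothesis becomes $agb=g$ and the conclusion becomes $\chi_p(ab)=1$; conversely, any $a,b\in K_0(p^n)$ with $agb=g$ yields two representations $x=kgk'=(ka)g(bk')$ of a fixed element $x$ of the double coset, whose $\chi_p$-values differ exactly by $\chi_p(ab)$. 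Hence $g$ is supported if and only if $\chi_p(ab)=1$ for every pair $a,b\in K_0(p^n)$ with $agb=g$.

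Finally I would solve $agb=g$ as $b=g^{-1}a^{-1}g$; the requirement $b\in K_0(p^n)$ then says exactly $a\in K_0(p^n)\cap gK_0(p^n)g^{-1}$. Substituting $k:=g^{-1}ag$ reparametrizes this set by $K_g=g^{-1}K_0(p^n)g\cap K_0(p^n)$, and one reads off $a=gkg^{-1}$, $b=k^{-1}$, hence $ab=gkg^{-1}k^{-1}$, which lies in $K_0(p^n)$ because both $gkg^{-1}$ and $k^{-1}$ do. Thus the condition of the previous paragraph is precisely $\chi_p(gkg^{-1}k^{-1})=1$ for all $k\in K_g$, and tracing the equivalences in both directions finishes the proof. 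The argument is purely formal; the only care needed is the bookkeeping of the reindexing $a\leftrightarrow k$ and not overlooking the multiplicativity of $\chi_p$ on $K_0(p^n)$, without which even the symbol $\chi_p(k_1k_2)$ would not factor, so I do not anticipate a genuine obstacle.
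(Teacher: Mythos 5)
Your argument is correct: the reduction of well-definedness of $X_g^{\chi_p}$ to the condition $agb=g$, the reparametrization $k=g^{-1}ag\in K_g$ giving $ab=gkg^{-1}k^{-1}$, and the preliminary check that $\chi_p\bigl(\begin{smallmatrix} a & b \\ c & d\end{smallmatrix}\bigr)=\chi_p(d)$ is multiplicative on $K_0(p^n)$ (using $c\in p^n\mathbb{Z}_p$ and $r\le n$) are exactly the standard argument. The paper itself offers no proof, citing the lemma as well known, and your write-up is precisely the proof it leaves to the reader, with the one genuinely non-formal point (multiplicativity of the extended $\chi_p$) properly verified.
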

 \begin{lemma} \label{nilentr}
     For $g=y(p^m)$ we have that $K_g=\{\begin{pmatrix}
    a & b \\
    c & d
\end{pmatrix}\in K_0(p^n):a-d+p^mb\in p^{n-m}\mathbb{Z}_p\}$ and $K_{w(1)}=\{\begin{pmatrix}
    a & b \\
    c & d
\end{pmatrix}\in K_0(p^n):b\in p^n\mathbb{Z}_p\}$. Notice that $K_{w(1)}$ contains the diagonal subgroup of $K$.
 \end{lemma}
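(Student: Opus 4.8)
\emph{Proof proposal.} The plan is a direct $2\times 2$ matrix computation over $\mathbb{Z}_p$, using the description of $K_g$ underlying Lemma \ref{supchar}. The key preliminary observation is that $y(p^m)=\begin{pmatrix} 1 & 0 \\ p^m & 1\end{pmatrix}$ and $w(1)=\begin{pmatrix} 0 & -1 \\ 1 & 0\end{pmatrix}$ both already lie in $K=GL_2(\mathbb{Z}_p)$, so conjugation by either of them carries $K$ into $K$. Hence for $k=\begin{pmatrix} a & b \\ c & d\end{pmatrix}\in K_0(p^n)$, the question whether the $g$-conjugate $g^{-1}kg$ again lies in $K_0(p^n)$ is governed \emph{only} by whether its lower-left entry lies in $p^n\mathbb{Z}_p$ (unit determinant and $\mathbb{Z}_p$-integrality being automatic). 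Unwinding the definition, $k\in K_g$ exactly when $k$ and its $g$-conjugate both lie in $K_0(p^n)$, so it suffices to compute the $(2,1)$-entry of that conjugate in each of the two cases.

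For $g=y(p^m)$, with $g^{-1}=\begin{pmatrix} 1 & 0 \\ -p^m & 1\end{pmatrix}$, multiplying out shows that the $(2,1)$-entry of $g^{-1}kg$ equals $c-p^m(a-d+p^mb)$. Since $k\in K_0(p^n)$ already forces $c\in p^n\mathbb{Z}_p$, this entry lies in $p^n\mathbb{Z}_p$ if and only if $p^m(a-d+p^mb)\in p^n\mathbb{Z}_p$, i.e.\ $a-d+p^mb\in p^{n-m}\mathbb{Z}_p$. This is precisely the claimed description of $K_{y(p^m)}$.

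For $g=w(1)$, note $w(1)^2=-I$ is central, so $w(1)^{-1}=-w(1)$ and a one-line computation gives $w(1)^{-1}kw(1)=\begin{pmatrix} d & -c \\ -b & a\end{pmatrix}$, whose $(2,1)$-entry is $-b$. Thus $w(1)^{-1}kw(1)\in K_0(p^n)$ iff $b\in p^n\mathbb{Z}_p$, which yields the asserted $K_{w(1)}$. The closing remark is then immediate: a diagonal matrix of $K$ has $b=c=0\in p^n\mathbb{Z}_p$, so it lies in $K_0(p^n)$ and satisfies $b\in p^n\mathbb{Z}_p$, placing the full diagonal torus of $K$ inside $K_{w(1)}$.

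There is no substantive obstacle here; the statement is elementary. The only points needing a little care are bookkeeping which side one conjugates on (the opposite convention merely flips the sign of the $p^mb$ term and produces a $g$-conjugate subgroup) and recording at the outset that $y(p^m),w(1)\in GL_2(\mathbb{Z}_p)$, which is what lets us reduce everything to the single congruence condition on the $(2,1)$-entry.
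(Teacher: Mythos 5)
Your proof is correct and follows essentially the same route as the paper: conjugate $k$ by $y(p^m)$ (resp.\ $w(1)$), read off the $(2,1)$-entry $c-p^m(a-d+p^mb)$ (resp.\ $-b$), and use $c\in p^n\mathbb{Z}_p$ to reduce to the stated congruence; the paper merely leaves the $w(1)$ case and the diagonal remark as ``similar.'' Your parenthetical about which side one conjugates on is well taken, since the paper's definition $K_g=g^{-1}K_0(p^n)g\cap K_0(p^n)$ literally demands $gkg^{-1}\in K_0(p^n)$ while both you and the paper compute $g^{-1}kg$, but this sign convention is immaterial for the subsequent arguments.
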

 \begin{proof}
     Writting $y(-p^m)\begin{pmatrix}
    a & b \\
    c & d
\end{pmatrix}y(p^m)=\begin{pmatrix}
    a+p^mb & b \\
    -p^ma+c-p^{2m}b+p^md & d-p^mtb
\end{pmatrix}$ and requiring the lower left entry to be in $p^n\mathbb{Z}_p$ implies the result. The proof for $w(1)$ is similar.
 \end{proof}
 \begin{lemma}
 	Assume $n\geq r>0$. The Hecke algebra $\mathcal{H}(K/\!\!/K_0(p^n),\chi_p)$ is supported exactly on $y(p^r),y(p^{r+1}),...,y(p^n)$.  
 \end{lemma}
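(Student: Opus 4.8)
The plan is to run through the complete list of double coset representatives $w(1),y(p),\dots,y(p^{n-1}),y(p^n)$ supplied by Lemma~\ref{doubcos} and, for each one, decide whether the support criterion of Lemma~\ref{supchar} holds on the group $K_g$ described in Lemma~\ref{nilentr}. The key simplification is that $\chi_p$ is a homomorphism on $K_0(p^n)$ given by $\begin{pmatrix}a&b\\c&d\end{pmatrix}\mapsto\chi_p(d)$; hence for $k=\begin{pmatrix}a&b\\c&d\end{pmatrix}\in K_g$ one has $\chi_p(gkg^{-1}k^{-1})=\chi_p(gkg^{-1})\chi_p(d)^{-1}$, so the whole question reduces to reading off the lower-right entry of the conjugate $gkg^{-1}$ and comparing it with $d$ modulo the conductor $p^r$.

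First I would handle $g=y(p^m)$ for $1\le m\le n$. A direct matrix computation (the same one used in the proof of Lemma~\ref{nilentr}) shows that the lower-right entry of the relevant conjugate of $k$ is $d+p^m b$, so that $\chi_p(gkg^{-1}k^{-1})=\chi_p(1+p^m b d^{-1})$, which lies in $1+p^m\mathbb{Z}_p$ since $b\in\mathbb{Z}_p$ and $d\in\mathbb{Z}_p^{\times}$. If $m\ge r$, then $1+p^m b d^{-1}\in 1+p^r\mathbb{Z}_p=\ker\chi_p$, so the criterion is satisfied for \emph{every} $k\in K_g$ and $y(p^m)$ is supported; this already gives the supportedness of $y(p^r),\dots,y(p^n)$. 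If $m<r$, I would produce a single $k\in K_g$ violating the criterion: taking $c=0$, $d=1$, and $a=1\mp p^m b$ (the sign chosen so that the diagonal congruence of Lemma~\ref{nilentr} holds, in fact with the relevant combination equal to $0$) yields, for every $b\in\mathbb{Z}_p$, a matrix $\begin{pmatrix}a&b\\0&1\end{pmatrix}\in K_g$ of unit determinant whose associated value is $\chi_p(1+p^m b)$. As $b$ runs over $\mathbb{Z}_p$ these values fill out all of $1+p^m\mathbb{Z}_p$, which contains $1+p^{r-1}\mathbb{Z}_p$ because $m\le r-1$; since the conductor of $\chi_p$ is exactly $p^r$, the character is nontrivial on $1+p^{r-1}\mathbb{Z}_p$, so some $b$ gives $\chi_p(gkg^{-1}k^{-1})\ne 1$ and $y(p^m)$ is not supported.

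It remains to treat $g=w(1)$. Here the conjugation gives $w(1)kw(1)^{-1}=\begin{pmatrix}d&-c\\-b&a\end{pmatrix}$, so the value in question is $\chi_p(a d^{-1})$; since $K_{w(1)}$ contains the diagonal subgroup of $K$ by Lemma~\ref{nilentr}, and $\chi_p$ is ramified because $r>0$, there is $a\in\mathbb{Z}_p^{\times}$ with $\chi_p(a)\ne 1$, and then $\begin{pmatrix}a&0\\0&1\end{pmatrix}\in K_{w(1)}$ breaks the criterion, so $w(1)$ is not supported. Combining the three cases shows the supported representatives are precisely $y(p^r),\dots,y(p^n)$, as claimed.

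I expect the only real effort to be organizational: carrying out the conjugations without sign slips, staying consistent with the sign convention in the statement of Lemma~\ref{nilentr}, and --- the one genuinely load-bearing point --- checking in the case $m<r$ both that the constructed matrices actually lie in $K_g$ (unit determinant and the correct diagonal congruence) and that their associated values sweep out enough of $1+p^m\mathbb{Z}_p$ to witness the nontriviality of $\chi_p$ at level $p^{r-1}$. The argument is uniform in $p$, requiring only the usual care with $p=2$ when interpreting ``conductor $1+p^r\mathbb{Z}_p$''.
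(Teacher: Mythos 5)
Your proof is correct, and its overall architecture---running through the double coset representatives of Lemma~\ref{doubcos} and testing the criterion of Lemma~\ref{supchar} on the groups $K_g$ of Lemma~\ref{nilentr}---is the same as the paper's; the treatment of $w(1)$ via diagonal matrices is identical. Where you genuinely diverge is in the key computation for $g=y(p^m)$. The paper evaluates the lower-right entry of the commutator itself, obtains $1+p^m s'(b)$ with $s'(b)=b(p^mb+1)/(1-p^mb)$, and then needs Hensel's lemma to show that $s'$ is onto $\mathbb{Z}_p$ before concluding that the character values sweep out $\chi_p(1+p^m\mathbb{Z}_p)$. You instead observe that $\chi_p$ is multiplicative on $K_0(p^n)$ (because lower-left entries lie in $p^n\mathbb{Z}_p$ and the conductor is at most $1+p^n\mathbb{Z}_p$), so that $\chi_p(gkg^{-1}k^{-1})=\chi_p(gkg^{-1})\chi_p(d)^{-1}=\chi_p(1+p^mbd^{-1})$, and surjectivity onto $1+p^m\mathbb{Z}_p$ is then immediate; this removes the Hensel step entirely and turns the $m\geq r$ direction into a one-line verification valid for \emph{all} $k\in K_g$ rather than a family of specially chosen matrices. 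Your explicit hedging on the sign ($a=1\mp p^mb$) is also warranted: the paper's Lemma~\ref{nilentr} computes $y(-p^m)ky(p^m)$ while Lemma~\ref{supchar} phrases the criterion through $gkg^{-1}$, and since $b$ ranges over all of $\mathbb{Z}_p$ either sign convention yields the same set of character values, so your argument absorbs that discrepancy harmlessly.
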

 \begin{proof}
We check each element individually:
\begin{itemize}
    \item For $g=w(1)$ we have $\chi_p(w(1)\begin{pmatrix}
    a & 0 \\
    0 & b
\end{pmatrix}w(1)^{-1}\begin{pmatrix}
    a^{-1} & 0 \\
    0 & b^{-1}
\end{pmatrix})=\chi_p(ab^{-1})$ and thus, since $a,b$ can be any units in $\mathbb{Z}_p^{\times}$, there is no support on $w(1)$.
    \item For $g=y(p^m)$ and $k=\begin{pmatrix}
    a & b \\
    c & d
\end{pmatrix}\in K_{y(p^m)}$, we have that $gkg^{-1}k^{-1}=\begin{pmatrix}
    1 & 0 \\
    p^m & 1
\end{pmatrix}\begin{pmatrix}
    a & b \\
    c & d
\end{pmatrix}\begin{pmatrix}
    1 & 0 \\
    -p^m & 1
\end{pmatrix}\begin{pmatrix}
    a & b \\
    c & d
\end{pmatrix}^{-1}=\begin{pmatrix}
    * & * \\
    * & \frac{bp^m(p^mb+d)}{det(k)}+1
\end{pmatrix}$
Picking $\begin{pmatrix}
    1-p^mb & b \\
    0 & 1
\end{pmatrix}\in K_0(p^n)$ we have that the condition of Lemma \ref{nilentr} is satisfied and thus $\begin{pmatrix}
    1-p^mb & b \\
    0 & 1
\end{pmatrix}\in K_{y(p^m)}$. But then we get that $\chi(gkg^{-1}k^{-1})=\chi(\begin{pmatrix}
    * & * \\
    * & s'(b)p^m+1
\end{pmatrix})=\chi(s'(b)p^m+1)$ with $s'(b)=\frac{b(p^mb+1)}{(1-p^mb)}$. For any $\xi\in\mathbb{Z}_p$ we want to show that $s'(b)=\xi$ has a solution in $\mathbb{Z}_p$ or equivalently that $f(b)=p^mb^2+(1+p^m\xi)b-\xi$ has a root. But $f(\xi)\in p\mathbb{Z}_p$ and $f'(\xi)\not\in p\mathbb{Z}_p$ which implies by Hensel's Lemma the existence of a $b\in \mathbb{Z}_p$ such that $f(b)=0$. We may thus replace $s'(b)$ by any $\xi\in\mathbb{Z}_p$ and obtain that $\chi(gkg^{-1}k^{-1})=\chi(\xi p^m+1)$ with $\xi$ free in $\mathbb{Z}_p$. \\
If $p^r|p^m$ or equivalently $r\leq m$, then this implies support at $y(p^m)$ since $\chi(p^r\mathbb{Z}_p+1)=\chi(1)=1$. For the other direction: if $\chi_p(p^m\xi+1)=1,\forall \xi\in\mathbb{Z}_p$ and $p^r\nmid p^m$ then $\chi_p$ of conductor $1+p^r\mathbb{Z}_p$ is induced by a character of $\mathbb{Z}_p^{\times}/(1+p^m\mathbb{Z}_p)$ with $m<r$, a contradiction. Thus we have support at $y(p^m) \iff r\leq m$.
\end{itemize}

 \end{proof}
It is thus evident that $\mathcal{H}(K/\!\!/K_0(p^n),\chi_p)$ is spanned by $\mathcal{V}_r^{\chi_p},..., \mathcal{V}_{n}^{\chi_p}$ if $\chi_p$ is non trivial and $\mathcal{V}_r^{1},..., \mathcal{V}_{n}^{1}$ and $\mathcal{U}_{0}^{1}$ if $\chi_p\equiv 1$ (see \cite[Theorem 6]{BarSom}). Notice that $\mathcal{V}_n^{\chi_p}$ is supported on $K_0(p^n)$ and is simply the identity of the algebra. We also use the following easy to prove lemmas:
\begin{lemma}  \label{conv}
Let $f_1,f_2\in\mathcal{H}(K/\!\!/K_0(p^n),\chi_p)$ such that $f_1$ is supported on the double coset $K_0(p^n)xK_0(p^n)= \bigcup\limits_{i=1}^ma_iK_0(p^n)$ and similarly $f_2$ is supported on $K_0(p^n)yK_0(p^n)= \bigcup\limits_{i=1}^kb_iK_0(p^n)$. Then the convolution product is $$f_1*f_2(h)=\sum\limits_{i=1}^mf_1(a_i)f_2(a_i^{-1}h)=\sum\limits_{i=1}^kf_1(hb_i)f_2(b_i^{-1}).$$ In order to have support at a value $h$ it is necessary to have $h\in a_ib_jK_0(p^n)$ for some $b_j$.
\end{lemma}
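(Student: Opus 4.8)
The plan is to unwind the definition of the convolution on $\mathcal{H}(K/\!\!/K_0(p^n),\chi_p)$ and collapse the defining integral to a finite sum using the $\chi_p$-equivariance on both sides, with the two character factors cancelling in each term. Normalize the Haar measure on the compact group $K$ so that $\mathrm{vol}(K_0(p^n))=1$ (the normalization for which $\mathcal{V}_n^{\chi_p}$ is the identity); then every left or right coset of $K_0(p^n)$ has volume $1$ and $f_1*f_2(h)=\int_K f_1(g)f_2(g^{-1}h)\,dg$. Since $f_1$ vanishes off $K_0(p^n)xK_0(p^n)=\bigcup_{i=1}^m a_iK_0(p^n)$, a disjoint union of volume-$1$ cosets, the integral splits as $\sum_{i=1}^m\int_{a_iK_0(p^n)}f_1(g)f_2(g^{-1}h)\,dg$. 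On the $i$-th piece write $g=a_ik$ with $k\in K_0(p^n)$: right $\chi_p$-equivariance of $f_1$ gives $f_1(a_ik)=\chi_p(k)f_1(a_i)$, while left $\chi_p$-equivariance of $f_2$ gives $f_2(k^{-1}a_i^{-1}h)=\chi_p(k)^{-1}f_2(a_i^{-1}h)$, so the integrand is the constant $f_1(a_i)f_2(a_i^{-1}h)$ on that coset; integrating and summing yields the first formula. The same cancellation shows each summand is independent of the chosen representative $a_i$ (replacing $a_i$ by $a_ik_0$, $k_0\in K_0(p^n)$, scales $f_1(a_i)$ by $\chi_p(k_0)$ and $f_2(a_i^{-1}h)$ by $\chi_p(k_0)^{-1}$), so the expression is well defined.

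For the second equality I would make the measure-preserving substitution $v=g^{-1}h$, i.e.\ $g=hv^{-1}$ (inversion followed by a right translation preserves Haar on the unimodular group $K$), obtaining $f_1*f_2(h)=\int_K f_1(hv^{-1})f_2(v)\,dv$, and decompose the support of $f_2$ into \emph{right} cosets $K_0(p^n)yK_0(p^n)=\bigcup_j K_0(p^n)d_j$. Writing $v=kd_j$ with $k\in K_0(p^n)$, left-equivariance of $f_2$ gives $f_2(kd_j)=\chi_p(k)f_2(d_j)$ and right-equivariance of $f_1$ gives $f_1(hd_j^{-1}k^{-1})=\chi_p(k)^{-1}f_1(hd_j^{-1})$, so again the factors cancel and integration over the volume-$1$ cosets gives $\sum_j f_1(hd_j^{-1})f_2(d_j)$. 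To rewrite this with the \emph{same} representatives $b_i$ one uses that the double cosets occurring here, namely those of $w(1)$ and of the $y(p^j)$, are stable under $g\mapsto g^{-1}$: indeed $y(p^j)^{-1}$ and $w(1)^{-1}$ are conjugates of $y(p^j)$ and $w(1)$ by diagonal elements of $K_0(p^n)$, so $K_0(p^n)y^{-1}K_0(p^n)=K_0(p^n)yK_0(p^n)$, whence $\{b_i^{-1}\}$ (the inverses of the left coset representatives) is a valid system of right coset representatives; taking $d_j=b_i^{-1}$ turns the sum into $\sum_i f_1(hb_i)f_2(b_i^{-1})$.

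Finally, the last assertion is immediate from either expression: in $\sum_{i=1}^m f_1(a_i)f_2(a_i^{-1}h)$ a term can be nonzero only when $a_i^{-1}h$ lies in the support $K_0(p^n)yK_0(p^n)=\bigcup_j b_jK_0(p^n)$ of $f_2$, i.e.\ $a_i^{-1}h\in b_jK_0(p^n)$ for some $j$, i.e.\ $h\in a_ib_jK_0(p^n)$. Since everything is elementary manipulation of the defining integral, I expect no genuine obstacle; the only points that require a moment's care are the bookkeeping that forces the use of right equivariance for the decomposed function and left equivariance for the other (so that the $\chi_p$-factors cancel), and the small remark, just made, on inversion-stability of the relevant double cosets that lets the representatives $b_i$ serve on both sides.
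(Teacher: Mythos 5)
Your argument is correct, and since the paper states this lemma without proof (it is listed among the ``easy to prove'' lemmas), there is nothing in the text to compare it against; what you give is the standard computation. The one point that genuinely needs an argument beyond unwinding the integral is the second formula: the lemma reuses the \emph{left} coset representatives $b_i$ there, and your substitution naturally produces a sum over \emph{right} coset representatives of $\mathrm{supp}(f_2)$, so one must know that $\{b_i^{-1}\}$ serves as such a system, i.e.\ that $K_0(p^n)yK_0(p^n)$ is stable under inversion. You handle this correctly by noting that the only double cosets in $K/\!\!/K_0(p^n)$ are those of $w(1)$ and $y(p^j)$ (Lemma \ref{doubcos}) and that $y(p^j)^{-1}=\mathrm{diag}(-1,1)\,y(p^j)\,\mathrm{diag}(-1,1)$ and $w(1)^{-1}=(-I)w(1)$ with the conjugating/multiplying elements in $K_0(p^n)$. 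The normalization $\mathrm{vol}(K_0(p^n))=1$, the cancellation of the $\chi_p$-factors on each coset, the well-definedness under change of representative, and the support criterion are all handled correctly.
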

\begin{lemma} \label{sup}
	If $r\leq i,j$ then $supp(\mathcal{V}_i^{\chi_p}*\mathcal{V}_j^{\chi_p}) \subseteq supp(\mathcal{V}_i^{1}*\mathcal{V}_j^{1})$.
\end{lemma}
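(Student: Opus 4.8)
The plan is to reduce the statement to Lemma~\ref{conv} together with the elementary fact that $\mathcal{V}_m^{\chi_p}$ has exactly the same support, namely the double coset $S_m:=K_0(p^n)y(p^m)K_0(p^n)$, as the ordinary characteristic function $\mathcal{V}_m^{1}$, regardless of $\chi_p$. First I would note that the hypothesis $r\leq i,j$ is exactly what guarantees, via the description of $\mathcal{H}(K/\!\!/K_0(p^n),\chi_p)$ as being supported precisely on $y(p^r),\dots,y(p^n)$, that $\mathcal{V}_i^{\chi_p}$ and $\mathcal{V}_j^{\chi_p}$ are genuine elements of the twisted algebra, so that the product on the left makes sense; for the trivial character the analogous functions exist for all $0\leq i,j\leq n$. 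Then I would fix once and for all a disjoint decomposition $S_i=\bigcup_{s}a_sK_0(p^n)$ into left $K_0(p^n)$-cosets; this decomposition depends only on $i$ and $n$, not on $\chi_p$.

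Next, fix an arbitrary $h\in K$ and apply Lemma~\ref{conv} to both products:
\[\mathcal{V}_i^{\chi_p}*\mathcal{V}_j^{\chi_p}(h)=\sum_{s}\mathcal{V}_i^{\chi_p}(a_s)\,\mathcal{V}_j^{\chi_p}(a_s^{-1}h),\qquad\mathcal{V}_i^{1}*\mathcal{V}_j^{1}(h)=\sum_{s}\mathcal{V}_i^{1}(a_s)\,\mathcal{V}_j^{1}(a_s^{-1}h).\]
Because every $a_s$ lies in $S_i$ we have $\mathcal{V}_i^{\chi_p}(a_s)\neq0$ and $\mathcal{V}_i^{1}(a_s)=1$; and because $\chi_p$ is valued in $\mathbb{C}^{\times}$, the quantity $\mathcal{V}_j^{\chi_p}(a_s^{-1}h)$ is nonzero if and only if $a_s^{-1}h\in S_j$, which is precisely the condition that makes $\mathcal{V}_j^{1}(a_s^{-1}h)=1$. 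Hence, writing $I_h:=\{s:a_s^{-1}h\in S_j\}$, all terms indexed by $s\notin I_h$ vanish in both sums, the untwisted product equals the nonnegative integer $|I_h|$, and the twisted product is a sum of $|I_h|$ complex numbers of absolute value $1$.

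To conclude: if $h\in supp(\mathcal{V}_i^{\chi_p}*\mathcal{V}_j^{\chi_p})$ then the twisted sum at $h$ is nonzero, so $I_h\neq\emptyset$, so $\mathcal{V}_i^{1}*\mathcal{V}_j^{1}(h)=|I_h|\geq1$ and $h\in supp(\mathcal{V}_i^{1}*\mathcal{V}_j^{1})$; since $h$ was arbitrary this gives the claimed inclusion. (Both supports are in any case unions of $K_0(p^n)$-double cosets, so one could equivalently check this on the representatives of Lemma~\ref{doubcos}.) I do not foresee any real obstacle here: the only step that merits a sentence is the identity $\mathcal{V}_i^{1}*\mathcal{V}_j^{1}(h)=|I_h|$ --- that in the untwisted case the surviving terms, each equal to $1$, cannot cancel. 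This is exactly what forces an inclusion rather than an equality of supports, since in the twisted case the sum of unit-modulus terms could still collapse to zero.
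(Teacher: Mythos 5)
Your argument is correct and is essentially the paper's own proof, which simply observes that $|\mathcal{V}_j^{\chi_p}(x)|=\mathcal{V}_j^{1}(x)$ for $j\geq r$; you have merely unpacked that observation through the convolution formula of Lemma~\ref{conv}, identifying the untwisted product at $h$ with the count $|I_h|$ and the twisted product with a sum of $|I_h|$ unit-modulus terms. Nothing is missing.
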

\begin{proof}
	Immediate by noting that $|\mathcal{V}_j^{\chi_p}(x)|=\mathcal{V}_j^{1}(x), \forall j\geq r$.
\end{proof}
Also from Lemma 3.7 and Lemma 3.9 of \cite{BarSom}:
\begin{lemma} \label{decomp}
 For every $1\leq j\leq n-1$ it holds that
	$K_0(p^n)y(p^j)K_0(p^n)= \bigcup\limits_{s\in\mathbb{Z}_p^{\times}/1+p^{n-j}\mathbb{Z}_p}d(s)y(p^j)K_0(p^n)= \bigcup\limits_{s\in\mathbb{Z}_p^{\times}/1+p^{n-j}\mathbb{Z}_p}K_0(p^n)y(p^j)d(s)$
\end{lemma}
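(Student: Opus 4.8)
The plan is to parametrise coset spaces of $K_0(p^n)$ inside $K$ by $\mathbb{P}^1(\mathbb{Z}/p^n\mathbb{Z})$ and read off the double coset decomposition from an orbit computation. First I would observe that $K_0(p^n)$ is exactly the stabiliser of the line $e_1=(1:0)$ for the left action of $K$ on column vectors modulo $p^n$: for $k=\begin{pmatrix}a&b\\c&d\end{pmatrix}\in K$ we have $k\cdot(1:0)=(\bar a:\bar c)$, and $\bar c=0$ with $\bar a$ a unit precisely when $c\in p^n\mathbb{Z}_p$. Since $K\to GL_2(\mathbb{Z}/p^n\mathbb{Z})$ is onto and $GL_2(\mathbb{Z}/p^n\mathbb{Z})$ acts transitively on $\mathbb{P}^1(\mathbb{Z}/p^n\mathbb{Z})$, the map $gK_0(p^n)\mapsto\overline{ge_1}$ is a $K$-equivariant bijection $K/K_0(p^n)\xrightarrow{\sim}\mathbb{P}^1(\mathbb{Z}/p^n\mathbb{Z})$. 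Under it, the left cosets contained in $K_0(p^n)y(p^j)K_0(p^n)=\bigcup_{k\in K_0(p^n)}ky(p^j)K_0(p^n)$ correspond bijectively to the $K_0(p^n)$-orbit of $\overline{y(p^j)e_1}=(1:p^j)$.

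Next I would compute that orbit. For $k=\begin{pmatrix}a&b\\c&d\end{pmatrix}\in K_0(p^n)$ we get $k\cdot(1:p^j)=(a+bp^j:c+dp^j)$; since $c\equiv 0\bmod p^n$ the second coordinate reduces to $dp^j$, and since $j\ge 1$ the first coordinate $a+bp^j$ is a unit, so the point equals $(1:p^j d(a+bp^j)^{-1})$. Taking $a=1$, $b=c=0$ and $d$ running over $\mathbb{Z}_p^\times$ already realises $(1:p^j d)$ for every unit $d$, so the orbit is exactly $\{(1:p^j s):s\in\mathbb{Z}_p^\times\}$; as $1\le j\le n-1$, two such points agree modulo $p^n$ iff $s\equiv s'\bmod p^{n-j}$. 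Hence the orbit, and with it the family of left cosets inside the double coset, is in bijection with $\mathbb{Z}_p^\times/(1+p^{n-j}\mathbb{Z}_p)$. Since $\overline{d(s)y(p^j)e_1}=(s:p^j)=(1:p^j s^{-1})$ lies in this orbit and $s\mapsto s^{-1}$ permutes $\mathbb{Z}_p^\times/(1+p^{n-j}\mathbb{Z}_p)$, the cosets $d(s)y(p^j)K_0(p^n)$ with $s$ ranging over $\mathbb{Z}_p^\times/(1+p^{n-j}\mathbb{Z}_p)$ are pairwise distinct and exhaust the double coset, which is the first equality.

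For the second equality I would run the mirror argument on $K_0(p^n)\backslash K$, identified with $\mathbb{P}^1(\mathbb{Z}/p^n\mathbb{Z})$ via $K_0(p^n)g\mapsto\overline{(0:1)g}$ (the reduction of the bottom row), for which $K_0(p^n)$ is the right stabiliser of $(0:1)$. Here $\overline{(0:1)y(p^j)}=(p^j:1)$, its right $K_0(p^n)$-orbit is $\{(p^j s:1):s\in\mathbb{Z}_p^\times\}$, and $y(p^j)d(s)$ has bottom row $(p^j s:1)$, so the identical counting gives the decomposition into right cosets; alternatively one may invoke that $K_0(p^n)y(p^j)K_0(p^n)$ splits into equally many left and right $K_0(p^n)$-cosets (their common number being $[K_0(p^n):K_{y(p^j)}]$ with $K_{y(p^j)}$ as in Lemma \ref{nilentr}) and only verify distinctness of the $K_0(p^n)y(p^j)d(s)$.

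The whole computation is elementary. The only points that need care are the well-definedness of the $\mathbb{P}^1$-valued invariant, i.e. the fact that the matrix entry being normalised stays a unit — this is precisely where the hypothesis $j\ge 1$ (and $j\le n-1$ for the comparison modulo $p^n$) enters — and the bookkeeping $p^j s\equiv p^j s'\bmod p^n\iff s\equiv s'\bmod p^{n-j}$, which is what produces the index group $\mathbb{Z}_p^\times/(1+p^{n-j}\mathbb{Z}_p)$. I do not expect a genuine obstacle; the slight subtlety is merely tracking which coordinate is normalised and noting that $d(s)$ versus $d(s^{-1})$ makes no difference since $s$ runs over a full system of representatives.
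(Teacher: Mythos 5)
Your argument is correct. Note that the paper itself does not prove this lemma: it is imported verbatim from Lemmas 3.7 and 3.9 of Baruch--Purkait \cite{BarSom}, where the decomposition is obtained by direct matrix manipulation (explicitly exhibiting, for each $k\in K_0(p^n)$, an $s$ with $ky(p^j)\in d(s)y(p^j)K_0(p^n)$, and checking distinctness by hand). Your route through the identification $K/K_0(p^n)\cong\mathbb{P}^1(\mathbb{Z}/p^n\mathbb{Z})$ is a genuine alternative and arguably cleaner: the stabiliser computation identifies $K_0(p^n)$ once and for all, the double coset becomes a single orbit computation, and the index set $\mathbb{Z}_p^\times/(1+p^{n-j}\mathbb{Z}_p)$ falls out of the elementary congruence $p^js\equiv p^js'\pmod{p^n}$ rather than being verified after the fact. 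All the delicate points are handled: the unit normalisation (using $j\geq 1$ and $a\in\mathbb{Z}_p^\times$ for $k\in K_0(p^n)$), the injectivity of $gK_0(p^n)\mapsto\overline{ge_1}$ via the stabiliser, the harmless replacement of $s$ by $s^{-1}$ since inversion permutes the representatives, and the mirror argument (or the left/right coset count) for the second equality. As a bonus, your orbit count $p^{n-j-1}(p-1)$ recovers the coset numbers used later in the paper's Proposition on the products $\mathcal{V}_\ell^{\chi_p}*\mathcal{V}_j^{\chi_p}$.
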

We can now describe the relations between the elements we defined above. 
\begin{prop}
	Inside $\mathcal{H}(K/\!\!/K_0(p^n),\chi_p)$ we have the following relations:
	\begin{enumerate}
		\item \[\mathcal{V}_{\ell}^{\chi_p}*\mathcal{V}_{\ell}^{\chi_p}=p^{n-\ell-1}(p-1)\sum_{i=\ell+1}^n\mathcal{V}_i^{\chi_p}+p^{n-\ell-1}(p-2)\mathcal{V}_{\ell}^{\chi_p}\]
		\item \[\mathcal{V}_{\ell}^{\chi_p}*\mathcal{V}_j^{\chi_p}=p^{n-j-1}(p-1)\mathcal{V}_{\ell}^{\chi_p}=\mathcal{V}_j^{\chi_p}*\mathcal{V}_{\ell}^{\chi_p}\] for $\ell<j<n$
		\item If $\mathcal{Y}_{\ell}^{\chi_p}=\sum\limits_{i=\ell}^n\mathcal{V}_i^{\chi_p}$ then \[\mathcal{V}_{\ell}^{\chi_p}*\mathcal{Y}_{\ell+1}^{\chi_p}=p^{n-\ell-1}\mathcal{V}_{\ell}^{\chi_p}=\mathcal{Y}_{\ell+1}^{\chi_p}*\mathcal{V}_{\ell}^{\chi_p}\] and thus from $1.$ we have: \[(\mathcal{V}_{\ell}^{\chi_p}-p^{n-\ell-1}(p-1))(\mathcal{V}_{\ell}^{\chi_p}+\mathcal{Y}_{\ell+1}^{\chi_p})=0\]
	\end{enumerate}
\end{prop}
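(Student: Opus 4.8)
The plan is to compute every structure constant directly from a single closed formula for the values of the $\mathcal{V}_m^{\chi_p}$ on double coset representatives; once that formula is in hand the proposition reduces to counting residue classes modulo powers of $p$ together with one purely algebraic manipulation. Fix $\ell$ and $j$ as in the statement (in particular $r\le\ell\le n-1$, and $\ell<j<n$ in relation 2; note $\ell\ge1$).

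\emph{Step 1: an evaluation formula.} I would first prove that for $g=\begin{pmatrix} a&b\\ c&d\end{pmatrix}\in K$ one has: if $v_p(c)=0$ then $g$ lies in the $w(1)$ double coset and $\mathcal{V}_{m}^{\chi_p}(g)=0$ for every $m$; while if $v_p(c)=e\ge 1$ then $a$ is a unit, $g$ lies in $K_0(p^n)y(p^{\min(e,n)})K_0(p^n)$, and
\[\mathcal{V}_{\min(e,n)}^{\chi_p}(g)=\chi_p\!\left(\frac{ad-bc}{a}\right),\qquad \mathcal{V}_{m}^{\chi_p}(g)=0\ \text{for}\ m\ne\min(e,n).\]
The proof is short: by Lemma \ref{decomp} (or directly when $e\ge n$) write $g=d(s)\,y(p^{e_0})\,k$ with $e_0=\min(e,n)$, $s\in\mathbb{Z}_p^\times$ and $k\in K_0(p^n)$; imposing $k=y(-p^{e_0})d(s)^{-1}g\in K_0(p^n)$ determines $s$ modulo $p^{\,n-e_0}$ and forces the lower-right entry of $k$ to be $\equiv(ad-bc)/a\pmod{p^n}$. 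Since $\chi_p$ is multiplicative on $K_0(p^n)$ with $\chi_p(d(s))=1$ and $\mathcal{V}_{e_0}^{\chi_p}(y(p^{e_0}))=1$, bi-equivariance gives $\mathcal{V}_{e_0}^{\chi_p}(g)=\chi_p\!\big((ad-bc)/a\big)$, and the vanishing for $m\ne e_0$ is immediate from Lemma \ref{doubcos}.

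\emph{Step 2: relations 1 and 2.} The product $\mathcal{V}_\ell^{\chi_p}*\mathcal{V}_j^{\chi_p}$ lies in $\mathcal{H}(K/\!\!/K_0(p^n),\chi_p)$, hence is a combination of $\mathcal{V}_r^{\chi_p},\dots,\mathcal{V}_n^{\chi_p}$ when $\chi_p\not\equiv 1$; when $\chi_p\equiv 1$ one checks it has no $\mathcal{U}_0^1$-component by evaluating at $w(1)$, where the matrices $y(-p^\ell)d(s)^{-1}w(1)$ have unit lower-left entry, so Step 1 makes every term vanish. It therefore suffices to read off the values at $g=y(p^i)$. By Lemma \ref{conv} with the left-coset decomposition of Lemma \ref{decomp},
\[\mathcal{V}_\ell^{\chi_p}*\mathcal{V}_j^{\chi_p}\big(y(p^i)\big)=\sum_{s\in\mathbb{Z}_p^\times/(1+p^{n-\ell}\mathbb{Z}_p)}\mathcal{V}_\ell^{\chi_p}\!\big(d(s)y(p^\ell)\big)\;\mathcal{V}_j^{\chi_p}\!\big(y(-p^\ell)d(s)^{-1}y(p^i)\big),\]
where $\mathcal{V}_\ell^{\chi_p}(d(s)y(p^\ell))=1$ and $y(-p^\ell)d(s)^{-1}y(p^i)=\begin{pmatrix} s^{-1}&0\\ p^i-p^\ell s^{-1}&1\end{pmatrix}$. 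The valuation of its lower-left entry is $i$ if $i<\ell$, equals $\ell$ if $i>\ell$, and equals $\ell+v_p(s-1)$ if $i=\ell$; and since this matrix has determinant $s^{-1}$ equal to its $(1,1)$-entry, Step 1 forces each nonvanishing term to be $\chi_p(1)=1$. Hence the coefficient of $\mathcal{V}_i^{\chi_p}$ (recalling $\mathcal{V}_i^{\chi_p}(y(p^i))=1$) is exactly the number of classes $s$ giving a nonzero term. For $j=\ell$: all $p^{n-\ell-1}(p-1)$ classes contribute when $\ell<i\le n$; for $i=\ell$ only those with $s\not\equiv 1\bmod p$ do, a total of $p^{n-\ell-1}(p-1)-p^{n-\ell-1}=p^{n-\ell-1}(p-2)$; for $i<\ell$ none do; this is relation 1. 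For $\ell<j<n$ only $i=\ell$ can give the valuation $j$, which forces $v_p(s-1)=j-\ell$, realized by exactly $p^{n-j-1}(p-1)$ classes; this is relation 2. The identities with the factors in the opposite order follow by the same computation using the right-coset decomposition in Lemma \ref{decomp} (or from the natural anti-automorphism of the Hecke algebra, which by Step 1 and $y(p^j)^{-1}\in K_0(p^n)y(p^j)K_0(p^n)$ fixes each $\mathcal{V}_j^{\chi_p}$).

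\emph{Step 3: relation 3 and the quadratic identity.} These are now formal. By linearity, relation 2, and $\mathcal{V}_\ell^{\chi_p}*\mathcal{V}_n^{\chi_p}=\mathcal{V}_\ell^{\chi_p}$ (as $\mathcal{V}_n^{\chi_p}$ is the identity of the algebra),
\[\mathcal{V}_\ell^{\chi_p}*\mathcal{Y}_{\ell+1}^{\chi_p}=\Big(1+\sum_{j=\ell+1}^{n-1}p^{n-j-1}(p-1)\Big)\mathcal{V}_\ell^{\chi_p}=p^{n-\ell-1}\mathcal{V}_\ell^{\chi_p},\]
summing the geometric series; the opposite order is identical. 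Rewriting relation 1 as $\mathcal{V}_\ell^{\chi_p}*\mathcal{V}_\ell^{\chi_p}=p^{n-\ell-1}(p-1)\mathcal{Y}_{\ell+1}^{\chi_p}+p^{n-\ell-1}(p-2)\mathcal{V}_\ell^{\chi_p}$ and substituting the previous identity into $(\mathcal{V}_\ell^{\chi_p}-p^{n-\ell-1}(p-1))(\mathcal{V}_\ell^{\chi_p}+\mathcal{Y}_{\ell+1}^{\chi_p})$, every term cancels. The one genuinely substantive step is Step 1: it is what makes the structure constants coincide with those of the untwisted algebra of \cite{BarSom} on the nose rather than only up to unit-modulus phases, the crux being that the matrices $y(-p^\ell)d(s)^{-1}y(p^i)$ have equal determinant and $(1,1)$-entry. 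One could alternatively invoke Lemma \ref{sup} together with the untwisted relations of \cite{BarSom} to learn which $\mathcal{V}_i^{\chi_p}$ occur, and use Step 1 only to pin the single surviving coefficient — but Step 1 itself cannot be avoided; the remaining ingredients, the residue-class counts and the geometric series, are routine.
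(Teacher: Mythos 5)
Your proof is correct, and its skeleton coincides with the paper's: expand the convolution over the left-coset decomposition of Lemma \ref{decomp}, reduce each structure constant to a count of classes $s$, and check that every contributing character value equals $1$. The genuine difference lies in how that last check is organized. The paper exhibits, case by case, explicit $k_1,k_2\in K_0(p^n)$ with $y(-p^{\ell})d(s)y(p^{j})=k_1y(p^m)k_2$ and verifies $\chi_p(k_1k_2)=1$ each time; you instead prove once the closed evaluation formula $\mathcal{V}^{\chi_p}_{\min(v_p(c),n)}(g)=\chi_p\bigl((ad-bc)/a\bigr)$, after which triviality of all the phases is immediate from the observation that the matrices $y(-p^{\ell})d(s)^{-1}y(p^i)$ have $(1,1)$-entry equal to their determinant. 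This buys uniformity: one computation covers relations 1 and 2, the exclusion of the $\mathcal{U}_0^1$-component when $\chi_p\equiv1$, and the support statement without appealing to \cite{BarSom}; it also makes the counting condition in relation 2 precise --- you correctly require $v_p(s-1)=j-\ell$ exactly, which is what yields $p^{n-j-1}(p-1)$ classes, whereas the paper's text records only the condition $s\in1+p^{j-\ell}\mathbb{Z}_p$. One small caveat: the parenthetical appeal to the anti-automorphism $f\mapsto f^{\vee}$, $f^{\vee}(x)=f(x^{-1})$, is imprecise as stated, since this map carries $\mathcal{H}(K/\!\!/K_0(p^n),\chi_p)$ onto $\mathcal{H}(K/\!\!/K_0(p^n),\bar{\chi}_p)$ and sends $\mathcal{V}_j^{\chi_p}$ to $\mathcal{V}_j^{\bar{\chi}_p}$ rather than fixing it; but you offer it only as an alternative, and your primary argument for the reversed products --- rerunning the computation with the right-coset decomposition --- is exactly what the paper does and is sound.
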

\begin{proof}
\begin{enumerate}
	\item From \cite[Proposition 3.10]{BarSom} we know that $\mathcal{V}_{\ell}^{1}*\mathcal{V}_{\ell}^{1}$ is supported exactly on $K_0(p^n)y(p^i)K_0(p^n)$ with $i\geq\ell$ and thus $\mathcal{V}_{\ell}^{\chi_p}*\mathcal{V}_{\ell}^{\chi_p}=\sum\limits_{i=\ell}^n\lambda_i\mathcal{V}_{i}^{\chi_p}$ with every $ \lambda_i\in\mathbb{C}$. It remains then to compute $\lambda_i=\mathcal{V}_{\ell}^{\chi_p}*\mathcal{V}_{\ell}^{\chi_p}(y(p^i))$. From Lemmas \ref{conv}, \ref{decomp} and noting that $\chi_p(d(s))=1$ we get: \[\mathcal{V}_{\ell}^{\chi_p}*\mathcal{V}_{\ell}^{\chi_p}(1)=\sum_{s\in\mathbb{Z}_p^{\times}/1+p^{n-\ell}\mathbb{Z}_p}\mathcal{V}_{\ell}^{\chi_p}(y(-p^{\ell}))\] And since $(\alpha):\begin{pmatrix}
    -1 & 0 \\
    0 & 1
\end{pmatrix}\begin{pmatrix}
    1 & 0 \\
    p^{\ell} & 1
\end{pmatrix}\begin{pmatrix}
    -1 & 0 \\
    0 & 1
\end{pmatrix}=\begin{pmatrix}
    1 & 0 \\
    -p^{\ell} & 1
\end{pmatrix},$ \[\mathcal{V}_{\ell}^{\chi_p}*\mathcal{V}_{\ell}^{\chi_p}(1)=\#|\mathbb{Z}_p^{\times}/1+p^{n-\ell}\mathbb{Z}_p|=p^{n-\ell-1}(p-1).\] Similarly for $j>\ell$: \[\mathcal{V}_{\ell}^{\chi_p}*\mathcal{V}_{\ell}^{\chi_p}(y(p^j))=\sum_{s\in\mathbb{Z}_p^{\times}/1+p^{n-j}\mathbb{Z}_p}\mathcal{V}_{\ell}^{\chi_p}(y(-p^{\ell})d(s)y(p^{j}))\] The only non-zero terms in the sum are those for which $\exists k_1, k_2\in K_0(p^n):y(-p^{\ell})d(s)y(p^{j})=k_1y(p^{\ell})k_2$. Taking $k_2=\begin{pmatrix}
    \frac{p^{j-\ell}-s^{-1}}{p^{n-\ell}-1} & 0 \\
    0 & -1
\end{pmatrix}$ it is easy to see that $k_1=y(-p^{\ell})d(s)y(p^{j})k_2^{-1}y(-p^{\ell})=\begin{pmatrix}
    * & 0 \\
    * & -1 \end{pmatrix}$ and thus $\chi_p(k_1k_2)=\chi_p(-1)^2=1$ giving us again: \[\mathcal{V}_{\ell}^{\chi_p}*\mathcal{V}_{\ell}^{\chi_p}(y(p^j))=\#|\mathbb{Z}_p^{\times}/1+p^{n-\ell}\mathbb{Z}_p|=p^{n-\ell-1}(p-1).\] 
    For $j=\ell$ we need $k_1, k_2\in K_0(p^n):k_1=y(-p^{\ell})d(s)y(p^{j})k_2^{-1}y(-p^{\ell})$. Setting $k_2^{-1}=\begin{pmatrix}
    a & b \\
    c & d
\end{pmatrix}$ we thus require $(1-s^{-1})(a-bp^{\ell})-d\in p^{n-\ell}\mathbb{Z}_p$ and since $d\in\mathbb{Z}_p^{\times}$ this holds exactly when $s\not\in 1+p\mathbb{Z}_p$ in which case we can choose $k_2=\begin{pmatrix}
      \frac{1-s^{-1}}{p^{n-\ell}-1} & 0 \\
     0 & -1
\end{pmatrix}$ and we get as before $\chi_p(k_1k_2)=1$. This leads to: \[\mathcal{V}_{\ell}^{\chi_p}*\mathcal{V}_{\ell}^{\chi_p}(y(p^{\ell}))=\#\{s\in\mathbb{Z}_p^{\times}/1+p^{n-\ell}\mathbb{Z}_p:s\not\in 1+p\mathbb{Z}_p\}=p^{n-\ell-1}(p-2).\]

	\item From Lemma \ref{sup} and the relation $\mathcal{V}_{\ell}^{1}*\mathcal{V}_j^{1}=p^{n-j-1}(p-1)\mathcal{V}_{\ell}^{1}$ as shown in \cite{BarSom} it follows that we only need to compute \[\mathcal{V}_{\ell}^{\chi_p}*\mathcal{V}_{j}^{\chi_p}(y(p^{\ell}))=\sum_{s\in\mathbb{Z}_p^{\times}/1+p^{n-\ell}\mathbb{Z}_p}\mathcal{V}_{j}^{\chi_p}(y(-p^{\ell})d(s)y(p^{\ell}))\] So similar to 1. we need to count the $s$ for which $\exists k_1, k_2\in K_0(p^n):k_1=y(-p^{\ell})d(s)y(p^{\ell})k_2^{-1}y(-p^j)$ which is equivalent to requiring $(1-s^{-1})(a-bp^j)-p^{j-\ell}d\in p^{n-\ell}\mathbb{Z}_p$ implying $s\in 1+p^{j-\ell}\mathbb{Z}_p$. In that case setting $s=1+p^{j-\ell}u, u\in\mathbb{Z}_p^{\times}$ we take $k_2=\begin{pmatrix}
    s^{-1} & 0 \\
    0 & u^{-1}
\end{pmatrix}$ and observing that $k_1=\begin{pmatrix}
    * & 0 \\
    * & u
\end{pmatrix}$ implies as before that $\chi_p(k_1k_2)=\chi_p(u)\chi_p(u^{-1})=1$ and so: \[\mathcal{V}_{\ell}^{\chi_p}*\mathcal{V}_{j}^{\chi_p}(y(p^{\ell}))=\#|s\in\mathbb{Z}_p^{\times}/1+p^{n-\ell}\mathbb{Z}_p:s\in 1+p^{j-\ell}\mathbb{Z}_p|=p^{n-j-1}(p-1).\] The case $\mathcal{V}_{j}^{\chi_p}*\mathcal{V}_{\ell}^{\chi_p}$ follows similarly using the second equality of Lemma \ref{conv} and $(\alpha)$ to get $\mathcal{V}_{\ell}^{\chi_p}(y(-p^{\ell}))=\mathcal{V}_{\ell}^{\chi_p}(k_0y(p^{\ell})k_0)=1$ for $k_0=\begin{pmatrix}
    -1 & 0 \\
    -p^n & 1
\end{pmatrix}$, which then imply: \[\mathcal{V}_{j}^{\chi_p}*\mathcal{V}_{\ell}^{\chi_p}(y(p^{\ell}))=\sum_{s\in\mathbb{Z}_p^{\times}/1+p^{n-\ell}\mathbb{Z}_p}\mathcal{V}_{j}^{\chi_p}(y(p^{\ell})d(s)y(-p^{\ell}))\]
	\item This follows immediately from 1. and 2.
	\end{enumerate}
	\end{proof}
The same calculations from \cite{BarSom} now give us the full description for the relations of the algebra. 
\begin{prop}
It holds that: $\mathcal{Y}_{j}^{\chi_p}*\mathcal{Y}_{\ell}^{\chi_p}=\mathcal{Y}_{\ell}^{\chi_p}*\mathcal{Y}_{j}^{\chi_p}=p^{n-j}\mathcal{Y}_{\ell}^{\chi_p}, \forall j: j\geq\ell\geq r$. Furthermore if $\chi_p\equiv 1$ then we also have the following relations:
\begin{enumerate}
	\item $\mathcal{U}_0^1*\mathcal{U}_0^1=p^{n-1}(p-1)\mathcal{U}_0^1+p^n\mathcal{Y}^1_1$
	\item $\mathcal{U}_0^1*\mathcal{Y}_{\ell}^{1}=\mathcal{Y}_{\ell}^{1}*\mathcal{U}_0^1=p^{n-\ell}\mathcal{U}_0^1$
	\item $\mathcal{U}_0^1*(\mathcal{U}_0^1-p^n)*(\mathcal{U}_0^1+p^{n-1})=0$
\end{enumerate}
\end{prop}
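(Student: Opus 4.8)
The identity for the $\mathcal{Y}_j^{\chi_p}$ is a purely algebraic consequence of the previous Proposition, valid for every character $\chi_p$ of conductor $1+p^r\mathbb{Z}_p$ with $r\leq n$. The three relations for $\mathcal{U}_0^1$ concern only the case $\chi_p\equiv 1$; there $\mathcal{H}(K/\!\!/K_0(p^n),1)$ is exactly the algebra studied in \cite{BarSom}, so (1) and (2) are their computations, and (3) is a formal consequence of (1) and (2).

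\emph{The $\mathcal{Y}$ relation.} Commutativity first: each $\mathcal{Y}_\ell^{\chi_p}=\sum_{i=\ell}^n\mathcal{V}_i^{\chi_p}$ is a sum of the $\mathcal{V}_i^{\chi_p}$, and any two of these commute (part 2 of the previous Proposition for distinct indices, trivially for equal ones), so $\mathcal{Y}_j^{\chi_p}$ and $\mathcal{Y}_\ell^{\chi_p}$ commute and it suffices to evaluate one order. Fix $\ell\geq r$ and argue by downward induction on $j$, from $j=n$ to $j=\ell$. For $j=n$ we have $\mathcal{Y}_n^{\chi_p}=\mathcal{V}_n^{\chi_p}$, the identity of the algebra, so the claim reads $\mathcal{Y}_\ell^{\chi_p}=p^{0}\mathcal{Y}_\ell^{\chi_p}$. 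For the inductive step ($\ell\leq j\leq n-1$), write $\mathcal{Y}_j^{\chi_p}=\mathcal{V}_j^{\chi_p}+\mathcal{Y}_{j+1}^{\chi_p}$; the key intermediate identity is
\[\mathcal{V}_j^{\chi_p}*\mathcal{Y}_\ell^{\chi_p}=p^{n-j-1}(p-1)\,\mathcal{Y}_\ell^{\chi_p},\]
obtained by decomposing $\mathcal{Y}_\ell^{\chi_p}=\sum_{m=\ell}^{j-1}\mathcal{V}_m^{\chi_p}+\mathcal{V}_j^{\chi_p}+\mathcal{Y}_{j+1}^{\chi_p}$ and applying parts 1, 2, 3 of the previous Proposition term by term: the contributions $p^{n-j-1}(p-1)\mathcal{V}_m^{\chi_p}$ for $m<j$, then $p^{n-j-1}(p-1)\mathcal{Y}_{j+1}^{\chi_p}+p^{n-j-1}(p-2)\mathcal{V}_j^{\chi_p}$ from $\mathcal{V}_j^{\chi_p}*\mathcal{V}_j^{\chi_p}$, and $p^{n-j-1}\mathcal{V}_j^{\chi_p}$ from $\mathcal{V}_j^{\chi_p}*\mathcal{Y}_{j+1}^{\chi_p}$, recombine (using $(p-2)+1=p-1$ for the $\mathcal{V}_j^{\chi_p}$ coefficient) into $p^{n-j-1}(p-1)\mathcal{Y}_\ell^{\chi_p}$. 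Combining with the inductive hypothesis $\mathcal{Y}_{j+1}^{\chi_p}*\mathcal{Y}_\ell^{\chi_p}=p^{n-j-1}\mathcal{Y}_\ell^{\chi_p}$ gives $\mathcal{Y}_j^{\chi_p}*\mathcal{Y}_\ell^{\chi_p}=\bigl(p^{n-j-1}(p-1)+p^{n-j-1}\bigr)\mathcal{Y}_\ell^{\chi_p}=p^{n-j}\mathcal{Y}_\ell^{\chi_p}$.

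\emph{The $\mathcal{U}_0^1$ relations.} When $\chi_p\equiv 1$, $w(1)$ is supported on the algebra and $\mathcal{U}_0^1=X^1_{w(1)}$. Relations (1) and (2) follow from the explicit decomposition of $K_0(p^n)w(1)K_0(p^n)$ into $K_0(p^n)$-cosets (as in \cite{BarSom}) together with the convolution formula of Lemma \ref{conv}: one evaluates $\mathcal{U}_0^1*\mathcal{U}_0^1$ and $\mathcal{U}_0^1*\mathcal{Y}_\ell^1$ at the double coset representatives $w(1),y(p),\dots,y(p^n)$, and the coset-counting is verbatim that of Baruch and Purkait. Finally (3) is formal: by (1),
\[(\mathcal{U}_0^1-p^n)*(\mathcal{U}_0^1+p^{n-1})=\mathcal{U}_0^1*\mathcal{U}_0^1-p^{n-1}(p-1)\mathcal{U}_0^1-p^{2n-1}=p^n\mathcal{Y}_1^1-p^{2n-1},\]
and multiplying on the left by $\mathcal{U}_0^1$ and using (2) with $\ell=1$ gives $p^n\cdot p^{n-1}\mathcal{U}_0^1-p^{2n-1}\mathcal{U}_0^1=0$.

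\emph{Main point of care.} The relations of the previous Proposition carry index restrictions (notably $\ell<j<n$ in part 2), so in the induction the boundary indices $j=n$, $j=n-1$ and $j=\ell$ must be checked separately, making sure that the recombination of coefficients still yields $p^{n-j-1}(p-1)$; likewise one must invoke the untwisted $\mathcal{U}_0^1$ computations only in the range in which \cite{BarSom} establishes them. Beyond this bookkeeping there is no genuine obstacle — the substantive representation-theoretic content already resides in the previous Proposition and in \cite{BarSom}.
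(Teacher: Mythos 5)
Your proposal is correct, and it follows the route the paper intends: the paper gives no proof of this proposition at all, simply asserting that "the same calculations from \cite{BarSom}" carry over, whereas you actually supply the downward induction on $j$ (with the coefficient recombination $(p-2)+1=p-1$ and the correct handling of the boundary cases $j=n$ and $\mathcal{V}_n^{\chi_p}=\mathrm{id}$) and the formal derivation of relation (3) from (1) and (2). Your argument is thus a legitimate filling-in of the details the paper leaves to the reader, not a different method.
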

Combining this with \cite[Proposition 3.12]{BarSom} for the case $\chi_p\equiv 1$ gives us the following result about the structure of $\mathcal{H}(K/\!\!/K_0(p^n),\chi_p)$:
\begin{theorem}
	Let $\chi_p$ be a character of conductor $1+p^r\mathbb{Z}_p$. Then the Hecke algebra $\mathcal{H}(K/\!\!/K_0(p^n),\chi_p)$ is an $n-r+1$ dimensional commutative algebra generated by $\mathcal{Y}_{r}^{\chi_p}, \mathcal{Y}_{r+1}^{\chi_p},...,\mathcal{Y}_{n}^{\chi_p}$ for $r> 0$ and $\mathcal{Y}_{1}^{1}, \mathcal{Y}_{2}^{1},...,\mathcal{Y}_{n}^{1},\mathcal{U}_0^1$ for $r=0$.
\end{theorem}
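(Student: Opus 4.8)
The plan is to read the theorem off the support computations of Lemmas~\ref{supchar}--\ref{nilentr} and the support lemma together with the last two Propositions; there is essentially no new computation, the statement being a repackaging of facts already established. First I would fix a linear basis of $\mathcal{H}(K/\!\!/K_0(p^n),\chi_p)$, then verify commutativity, and finally observe that the listed elements are themselves a basis, so that they trivially generate the algebra as a unital $\mathbb{C}$-algebra.

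\textbf{Identifying the underlying vector space.} For $r>0$, Lemma~\ref{nilentr} together with the support criterion of Lemma~\ref{supchar} and the support lemma above show that the only double cosets in $K/\!\!/K_0(p^n)$ on which a nonzero $(\chi_p,\chi_p)$-equivariant function can live are $K_0(p^n)y(p^j)K_0(p^n)$ with $r\le j\le n$; each of these carries a one-dimensional space, namely $\mathbb{C}\,\mathcal{V}_j^{\chi_p}$, so $\{\mathcal{V}_r^{\chi_p},\dots,\mathcal{V}_n^{\chi_p}\}$ is a $\mathbb{C}$-basis and $\dim\mathcal{H}(K/\!\!/K_0(p^n),\chi_p)=n-r+1$. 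For $r=0$, i.e.\ $\chi_p\equiv 1$, the coset of $w(1)$ is additionally supported by \cite[Theorem 6]{BarSom}, so $\{\mathcal{V}_1^1,\dots,\mathcal{V}_n^1,\mathcal{U}_0^1\}$ is a basis and the dimension is $n+1=n-r+1$.

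\textbf{Commutativity and generation.} For $r>0$, part 2 of the first Proposition gives $\mathcal{V}_\ell^{\chi_p}*\mathcal{V}_j^{\chi_p}=\mathcal{V}_j^{\chi_p}*\mathcal{V}_\ell^{\chi_p}$ for $\ell<j<n$; since $\mathcal{V}_n^{\chi_p}$ is the identity and each $\mathcal{V}_\ell^{\chi_p}$ commutes with itself, every pair of basis vectors commutes and the algebra is commutative. For $r=0$ I would additionally invoke relation 2 of the second Proposition, that $\mathcal{U}_0^1$ commutes with each $\mathcal{Y}_\ell^1$; writing $\mathcal{V}_j^1=\mathcal{Y}_j^1-\mathcal{Y}_{j+1}^1$ (with $\mathcal{Y}_{n+1}^1:=0$) then shows $\mathcal{U}_0^1$ commutes with every $\mathcal{V}_j^1$, hence with the whole algebra. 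Finally, since $\mathcal{Y}_j^{\chi_p}=\sum_{i=j}^n\mathcal{V}_i^{\chi_p}$, the change of basis from $(\mathcal{V}_r^{\chi_p},\dots,\mathcal{V}_n^{\chi_p})$ to $(\mathcal{Y}_r^{\chi_p},\dots,\mathcal{Y}_n^{\chi_p})$ is unipotent upper triangular, hence invertible, so the $\mathcal{Y}_j^{\chi_p}$ are themselves a basis; in particular the unital subalgebra they generate — the unit being $\mathcal{V}_n^{\chi_p}=\mathcal{Y}_n^{\chi_p}$ — is everything. The same remark handles $r=0$ with $\{\mathcal{Y}_1^1,\dots,\mathcal{Y}_n^1,\mathcal{U}_0^1\}$. (If one wants relations as well, the two Propositions, plus \cite[Proposition 3.12]{BarSom} for $r=0$, give an explicit presentation, but this is not needed for the statement as phrased.)

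\textbf{Main obstacle.} There is no real obstacle beyond bookkeeping, since the substantive work lies in the preceding Propositions (the convolution computations) and in the support lemmas. The one point that needs genuine care is the dimension count: one must be certain the support analysis rules out \emph{every} double coset other than the $y(p^j)$ with $j\ge r$ (and $w(1)$ when $\chi_p\equiv1$), so that the dimension is exactly $n-r+1$ and not larger — and for $r=0$ this relies on the corresponding fixed-vector result of Baruch--Purkait rather than on the lemmas proved here.
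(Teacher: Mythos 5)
Your proposal is correct and follows essentially the same route as the paper, which obtains this theorem by simply combining the support lemmas (for the dimension count), the convolution relations of the two preceding propositions (for commutativity), and the triangular relation $\mathcal{Y}_j^{\chi_p}=\sum_{i\ge j}\mathcal{V}_i^{\chi_p}$ (for generation), citing \cite[Proposition 3.12]{BarSom} for the extra relations when $\chi_p\equiv 1$. Your write-up just makes these steps explicit where the paper leaves them implicit.
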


\subsection{Representations of $K$ with $(\chi_p,K_0(p^n))$-fixed vectors}
Throughout this subsection let $\chi_p$ be a $p-$adic character  of conductor $1+p^r\mathbb{Z}_p$. We extend $\chi_p$ to a character of $K_0(p^n)$ by setting (abusing notation) $\chi_p(A)=\chi_p(\begin{pmatrix}
    a & b \\
    c & d
\end{pmatrix})=\chi_p(d)$ for $A\in K_0(p^n), n\geq r$. We only examine the case where $\chi_p\not\equiv 1$, that is $r>0$.For the trivial character case $\chi_p\equiv 1$ we refer the reader to \cite{BarSom}. Consider now the space \[I(n)=Ind_{K_0(p^n)}^K\chi_p=\{\phi:K\to\mathbb{C}:\phi(k_0k)=\chi_p(k_0)\phi(k),\forall k\in K, k_0\in K_0(p^n)\}.\]
It is known that $I(n)$ is a left $K$-module with action given by right translation $\pi_R(k)\phi(x)=\phi(xk)$ and of dimension $|K:K_0(p^n)|dim(\chi_p)=p^{n-1}(p+1)$. 
\begin{defi}
	Let $V$ be a complex vector space and $\pi:K\to V$ a representation. We call a vector $v\in V, (\chi_p,K_0(p^n))$-fixed if $\pi(k)v=\chi_p(k)v,\forall k\in K_0(p^n)$.
\end{defi}
\begin{prop}
	There exists a correspondence between irreducible subrepresentations of $I(n)$ and smooth irreducible representations of $K$ containing a $(\chi_p,K_0(p^n))$-fixed vector. 
\end{prop}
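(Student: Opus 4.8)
The plan is to recognize this as an incarnation of Frobenius reciprocity for the open, finite-index subgroup $K_0(p^n)$ of the profinite group $K$, combined with complete reducibility. I would first record three elementary facts. Since $K$ is compact, any smooth representation of $K$ carrying a $(\chi_p,K_0(p^n))$-fixed vector is finite dimensional and factors through a finite quotient of $K$, hence is semisimple by Maschke's theorem; the same applies to $I(n)$, which is finite dimensional of dimension $p^{n-1}(p+1)$. Next, because $K_0(p^n)$ has finite index, the (co)induced module $I(n)=\mathrm{Ind}_{K_0(p^n)}^{K}\chi_p$ satisfies Frobenius reciprocity in the form $\mathrm{Hom}_K(I(n),V)\cong\mathrm{Hom}_{K_0(p^n)}(\chi_p,V|_{K_0(p^n)})$ for every smooth $K$-representation $V$. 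Finally, $\mathrm{Hom}_{K_0(p^n)}(\chi_p,V|_{K_0(p^n)})$ is canonically the space of $(\chi_p,K_0(p^n))$-fixed vectors of $V$, since a $K_0(p^n)$-homomorphism $\chi_p=\mathbb{C}\to V$ is nothing but the choice of the image of $1$.

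Granting these, both directions are short. If $W\subseteq I(n)$ is irreducible, then by semisimplicity of $I(n)$ it is also a quotient of $I(n)$, so $\mathrm{Hom}_K(I(n),W)\neq0$ and hence $W$ contains a nonzero $(\chi_p,K_0(p^n))$-fixed vector; being an irreducible subrepresentation it is in particular a smooth irreducible representation of $K$ with such a vector. Concretely, one may also observe that $\phi\mapsto\phi(1)$ is nonzero on every nonzero subrepresentation of $I(n)$ (because $\phi(k)=(\pi_R(k)\phi)(1)$) and transforms under $K_0(p^n)$ through $\chi_p$, which produces the fixed vector directly once one invokes semisimplicity of $W|_{K_0(p^n)}$. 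Conversely, if $(\pi,V)$ is smooth irreducible with a $(\chi_p,K_0(p^n))$-fixed vector, then $\mathrm{Hom}_K(I(n),V)\neq0$; any nonzero $K$-map $I(n)\to V$ has image a nonzero subrepresentation of the irreducible $V$, hence all of $V$, so $V$ is a quotient of $I(n)$ and therefore --- once more by semisimplicity of $I(n)$ --- isomorphic to a subrepresentation of $I(n)$. Thus sending an irreducible subrepresentation $W\subseteq I(n)$ to its isomorphism class lands in, and surjects onto, the isomorphism classes of smooth irreducible representations of $K$ with a $(\chi_p,K_0(p^n))$-fixed vector.

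There is no hard computation here; the only point to handle with care --- and the one I would call the main obstacle --- is the precise meaning of ``correspondence'', i.e.\ the multiplicity bookkeeping. An isomorphism type $W$ occurs in $I(n)$ with multiplicity $\dim\mathrm{Hom}_K(W,I(n))$, and since $W$ is absolutely irreducible and $I(n)$ is semisimple this equals $\dim\mathrm{Hom}_K(I(n),W)=\dim\mathrm{Hom}_{K_0(p^n)}(\chi_p,W|_{K_0(p^n)})$, the dimension of the $(\chi_p,K_0(p^n))$-fixed subspace of $W$. So the correspondence is a genuine bijection at the level of isomorphism classes, with each type $W$ realized inside $I(n)$ as many times as the dimension of its fixed space; beyond this, and keeping straight which form of the induction--restriction adjunction is being used, the argument is entirely formal and rests only on Maschke's theorem and Frobenius reciprocity.
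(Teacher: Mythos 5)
Your argument is correct and follows essentially the same route as the paper: Frobenius reciprocity identifying $\mathrm{Hom}_K(I(n),V)$ with $\mathrm{Hom}_{K_0(p^n)}(\chi_p,V|_{K_0(p^n)})$, the observation that such a homomorphism is exactly the choice of a $(\chi_p,K_0(p^n))$-fixed vector, and semisimplicity to pass between quotients and subrepresentations. Your version is simply more explicit about the Maschke/finite-index hypotheses and the multiplicity bookkeeping, which the paper leaves implicit.
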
 
\begin{proof}
	By Frobenius Reciprocity it follows that if $\rho:K\to V$ is a smooth irreducible representation \[Hom_K(I(n),V)= Hom_{K_0(p^n)}(\mathbb{C},V)= Hom_{K_0(p^n)}(\chi_p,\rho|_{K_0(p^n)}).\]
	If $\phi\in Hom_{K_0(p^n)}(\mathbb{C}_{\chi_p},V)$ with $\phi(1)=v$ then $\chi_p(k_0)v=\phi(\chi_p(k_0)\cdot 1)=\rho(k_0)v$ completing the proof.  
	\end{proof}

We thus consider the space of $(\chi_p,K_0(p^n))$-fixed vectors \[I(n)^{(\chi_p,K_0(p^n))}=\{f\in I(n):\rho(k_0)f=\chi_p(k_0)f,\forall k_0\in K_0(p^n)\}\] and immediately observe that \[I(n)^{(\chi_p,K_0(p^n))}=\{f:K\to\mathbb{C}:f(kxk')=\chi_p(kk')f(x), \forall k,k'\in K_0(p^n),x\in K\}.\] From this we conclude that $I(n)^{(\chi_p,K_0(p^n))}=\mathcal{H}(K/\!\!/K_0(p^n),\chi_p)$ which means that $dim(I(n)^{(\chi_p,K_0(p^n))})=n-r+1$. \\
Observing that $End_K(I(n))\cong\mathcal{H}(K/\!\!/K_0(p^n),\chi_p)$ it follows that $I(n)$ is multiplicity free if and only if $\mathcal{H}(K/\!\!/K_0(p^n),\chi_p)$ is commutative. This means that $I(n)$ is multiplicity free which implies that the correspondence between smooth irreducible representations of $K$ and $(\chi_p,K_0(p^n))$-fixed vectors is bijective. Combining these facts then we obtain that:
\begin{prop}
	The representation $I(n)$ is a sum of $n+1-r$ distinct irreducible representations. 
\end{prop}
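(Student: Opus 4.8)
The plan is to deduce the statement from the complete reducibility of $I(n)$ together with the commutativity and dimension count already established for its endomorphism algebra. First I would observe that $I(n)$, being a finite--dimensional smooth representation of the profinite group $K=GL_2(\mathbb{Z}_p)$, factors through a finite quotient of $K$: the kernel of the action equals the (finite) intersection of the pointwise stabilizers of a basis, each of which is open by smoothness, hence the kernel is open. By Maschke's theorem $I(n)$ is therefore completely reducible, and I would write $I(n)\cong\bigoplus_{i}\rho_i^{\oplus m_i}$ with the $\rho_i$ pairwise non-isomorphic irreducible $K$-representations and each $m_i\geq 1$.

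Next I would invoke Schur's lemma over $\mathbb{C}$: $Hom_K(\rho_i,\rho_j)$ is $\mathbb{C}$ when $i=j$ and $0$ otherwise, so $End_K(I(n))\cong\prod_i M_{m_i}(\mathbb{C})$ as an algebra, of dimension $\sum_i m_i^2$. We have already identified $End_K(I(n))$ with $\mathcal{H}(K/\!\!/K_0(p^n),\chi_p)$, and by the structure theorem for this Hecke algebra it is commutative of dimension $n+1-r$. Since a finite product of matrix algebras $\prod_i M_{m_i}(\mathbb{C})$ is commutative exactly when every $m_i=1$, it follows that $I(n)$ is multiplicity free, so the number of its irreducible constituents is $\sum_i 1=\sum_i m_i^2=\dim_{\mathbb{C}}End_K(I(n))=n+1-r$. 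By the correspondence of the previous proposition, these $n+1-r$ summands $\rho_i$ are precisely the distinct smooth irreducible representations of $K$ admitting a $(\chi_p,K_0(p^n))$-fixed vector, which gives the claim.

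I do not expect a genuine obstacle here. The only slightly delicate inputs are the complete reducibility of $I(n)$, which rests on smoothness together with the compactness of $K$ and is standard, and the algebra isomorphism $End_K(I(n))\cong\mathcal{H}(K/\!\!/K_0(p^n),\chi_p)$, which sends an intertwiner to a suitably normalized matrix coefficient on the one-dimensional spaces $I(n)^{(\chi_p,K_0(p^n))}\cap\rho_i$ and which was already used in the discussion preceding the statement. Granting these, the remainder — a semisimple module with commutative endomorphism ring is multiplicity free, and the number of its constituents equals the dimension of that ring — is purely formal.
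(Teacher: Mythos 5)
Your argument is correct and follows essentially the same route as the paper: identify $End_K(I(n))$ with the commutative Hecke algebra $\mathcal{H}(K/\!\!/K_0(p^n),\chi_p)$ to get multiplicity one, and then read off the number of irreducible constituents from its dimension $n+1-r$ (the paper phrases the count via $\dim I(n)^{(\chi_p,K_0(p^n))}$, which equals the same quantity). You merely make explicit the standard inputs (Maschke via an open kernel, Schur, the matrix-algebra decomposition) that the paper leaves implicit.
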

\begin{proof}
	It is immediate from the above argument that $I(n)$ has exactly \\$dim(I(n)^{(\chi_p,K_0(p^n))})=n+1-r$ irreducible component and is multiplicity free. 
\end{proof}
Let $\sigma(k)$ be an irreducible representation of $K$ such that $\sigma(k)$ contains an $(\chi_p,K_0(p^k))$-fixed vector but not a $(\chi_p,K_0(p^{k-1}))$-fixed vector.
Observe that since $I(r)$ has exactly $1$ irreducible representation we obtain the equality $I(r)=\sigma(r)$ and $dim(\sigma(r))=dim(I(r))=p^{r-1}(p+1)$. Now $I(r+1)$ contains exactly a $(\chi_p,K_0(p^{r+1}))$-fixed vector that is not $(\chi_p,K_0(p^{r}))$-fixed which corresponds to $\sigma(r+1)$ and $dim(\sigma(r+1))=dim(I(r+1))-dim(I(r))=p^{r-1}(p^2-1)$. Continuing in this manner we obtain for each $k\in\{r,...,n\}$ a unique $(\chi_p,K_0(p^{k}))$-fixed vector that is not $(\chi_p,K_0(p^{k-1}))$-fixed. 
\begin{cor} 
	Let $k\geq r$, then there exists a unique irreducible representation $\sigma(k)$ containing a unique up to scalar $(\chi_p,K_0(p^{k}))$-fixed vector that is not $(\chi_p,K_0(p^{k-1}))$-fixed and $dim(\sigma(r))=p^{r-1}(p+1), dim(\sigma(k))=p^{k-2}(p^2-1)$ for $k\in\{r+1,...,n\}$.
\end{cor}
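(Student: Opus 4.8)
The plan is to exploit the chain of $K$-submodules $I(r)\subseteq I(r+1)\subseteq\cdots\subseteq I(n)$ inside $I(n)$, together with the multiplicity-one statement already established, so that each $I(k)$ acquires exactly one new irreducible constituent over $I(k-1)$, and then read off everything from dimension counting. The inclusion $I(k-1)\subseteq I(k)$ is the first point to record: since $K_0(p^k)\subseteq K_0(p^{k-1})$, any $\phi$ which is left $\chi_p$-equivariant under $K_0(p^{k-1})$ is a fortiori left $\chi_p$-equivariant under $K_0(p^k)$, and this inclusion of function spaces intertwines the right-translation actions, hence is a map of $K$-modules.

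Next I would use that $K=GL_2(\mathbb{Z}_p)$ is compact, so all the finite-dimensional modules in sight are semisimple, and invoke the earlier conclusions that $I(k)$ is multiplicity free with exactly $k+1-r$ pairwise inequivalent irreducible constituents; by Frobenius reciprocity (in the form used earlier in this section) these constituents are precisely the irreducible representations of $K$ admitting a $(\chi_p,K_0(p^k))$-fixed vector, and the correspondence is bijective. Consequently the constituents of $I(k-1)$ form a subset of those of $I(k)$, and comparing the counts $k-r$ and $k+1-r$ shows $I(k)$ has a unique constituent, call it $\sigma(k)$, not occurring in $I(k-1)$; by semisimplicity and multiplicity freeness one even has $I(k)=I(k-1)\oplus\sigma(k)$. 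I would then check $\sigma(k)$ has no $(\chi_p,K_0(p^{k-1}))$-fixed vector: otherwise Frobenius reciprocity would make it a quotient, hence (semisimplicity) a constituent, of $I(k-1)$, a contradiction; conversely any irreducible representation with a $(\chi_p,K_0(p^k))$-fixed but no $(\chi_p,K_0(p^{k-1}))$-fixed vector is a constituent of $I(k)$ but not of $I(k-1)$, hence equals $\sigma(k)$. For $k=r$ one simply has $I(r)=\sigma(r)$ since $I(r)$ is irreducible.

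The dimensions are now forced. From $I(k)=I(k-1)\oplus\sigma(k)$ and $\dim I(j)=|K:K_0(p^j)|\dim(\chi_p)=p^{j-1}(p+1)$ we get $\dim\sigma(r)=p^{r-1}(p+1)$ and, for $k\geq r+1$,
\[
\dim\sigma(k)=\dim I(k)-\dim I(k-1)=p^{k-1}(p+1)-p^{k-2}(p+1)=p^{k-2}(p^2-1).
\]
Finally, uniqueness up to scalar of the distinguished fixed vector in $\sigma(k)$ follows from multiplicity freeness of $I(k)$: its space of $(\chi_p,K_0(p^k))$-fixed vectors is $\mathcal{H}(K/\!\!/K_0(p^k),\chi_p)$, of dimension $k+1-r$, exactly the number of constituents, so each constituent — in particular $\sigma(k)$ — carries a one-dimensional space of $(\chi_p,K_0(p^k))$-fixed vectors, and by the previous paragraph none of these is $(\chi_p,K_0(p^{k-1}))$-fixed.

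I do not anticipate a genuine obstacle here: every ingredient (the multiplicity-free decomposition, the dimension of $I(n)$, commutativity of $\mathcal{H}(K/\!\!/K_0(p^n),\chi_p)$, and Frobenius reciprocity) is already in place, and the argument is essentially a telescoping count on the nested family $I(r)\subseteq\cdots\subseteq I(n)$. The only step requiring a little care is confirming that the ``new'' constituent $\sigma(k)$ of $I(k)$ really admits no $(\chi_p,K_0(p^{k-1}))$-fixed vector, which is precisely where semisimplicity of finite-dimensional $K$-modules and Frobenius reciprocity are used in tandem.
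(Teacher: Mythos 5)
Your proposal is correct and follows essentially the same route as the paper: a telescoping count over the chain $I(r)\subseteq\cdots\subseteq I(n)$, using multiplicity-freeness, Frobenius reciprocity, and $\dim\sigma(k)=\dim I(k)-\dim I(k-1)$. You merely make explicit a few steps the paper leaves implicit (the $K$-equivariance of the inclusions and the semisimplicity argument showing the new constituent has no $(\chi_p,K_0(p^{k-1}))$-fixed vector).
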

The result is partially known due to Casselman in a more general setting:
\begin{theorem}[\textbf{Casselman} \cite{CassLehn} Theorem 1]
    Let $\rho$ be an irreducible admissible infinite dimensional representation of $G$ where $\rho=\chi_p$ on the scalar matrices. If $1+p^{\mathfrak{c}(\rho)}\mathbb{Z}_p$ is the largest ideal of $\mathbb{Z}_p$ such that there exists a $(\chi_p,K_0(p^{\mathfrak{c}(\rho)}))$-fixed 
 vector $v$, then $v$ is unique up to scalar multiples. 
\end{theorem}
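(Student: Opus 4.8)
The plan is to reduce Casselman's statement for $G$ to the $K$-local picture obtained above, at the cost of a single multiplicity statement that the Hecke-algebra computations do not see directly. First I would restrict $\rho$ to $K=GL_2(\mathbb{Z}_p)$ and decompose $\rho|_K=\bigoplus_{\tau}m_\tau(\rho)\,\tau$ into irreducible $K$-representations with finite multiplicities (by admissibility). By the correspondence above between irreducible subrepresentations of $I(n)$ and irreducible $K$-representations carrying a $(\chi_p,K_0(p^n))$-fixed vector, and since $I(n)=\sigma(r)\oplus\cdots\oplus\sigma(n)$ is multiplicity free with $\dim I(n)^{(\chi_p,K_0(p^n))}=n-r+1$, the only irreducible $K$-types with a $(\chi_p,K_0(p^n))$-fixed vector are $\sigma(r),\dots,\sigma(n)$, each contributing a one-dimensional such space; $\sigma(k)$ with $k>n$ contributes none, since a $(\chi_p,K_0(p^n))$-fixed vector would also be $(\chi_p,K_0(p^{k-1}))$-fixed, which $\sigma(k)$ does not possess. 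Hence
\[
\dim\rho^{(\chi_p,K_0(p^n))}\;=\;\sum_{k=r}^{n}m_{\sigma(k)}(\rho)\qquad\text{for all }n\geq r.
\]
Taking $\mathfrak{c}(\rho)$ to be the least $n$ for which the left-hand side is non-zero forces $m_{\sigma(k)}(\rho)=0$ for $r\leq k<\mathfrak{c}(\rho)$, so $\dim\rho^{(\chi_p,K_0(p^{\mathfrak{c}(\rho)}))}=m_{\sigma(\mathfrak{c}(\rho))}(\rho)$, and the theorem becomes the assertion that the $K$-type $\sigma(\mathfrak{c}(\rho))$ occurs in $\rho|_K$ with multiplicity exactly one.

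To obtain that, I would realize $\rho$ in its Kirillov model $\mathcal{K}(\rho,\psi)$ for an additive character $\psi$ of conductor $\mathbb{Z}_p$, in which $\begin{pmatrix}a&b\\0&1\end{pmatrix}$ acts by $\xi(x)\mapsto\psi(bx)\xi(ax)$ and scalars act by $\chi_p$. A vector fixed by the upper-triangular part of $K$, with the forced $\chi_p$-twist under the diagonal torus, is exactly a function supported on $\mathbb{Z}_p\setminus\{0\}$ and invariant under dilation by $\mathbb{Z}_p^\times$, hence is pinned down by the sequence $(\xi(p^m))_{m\geq 0}$. Imposing in addition invariance under $\begin{pmatrix}1&0\\c&1\end{pmatrix}$ with $c\in p^{n}\mathbb{Z}_p$, and using the factorization $\begin{pmatrix}1&0\\c&1\end{pmatrix}=\begin{pmatrix}c^{-1}&1\\0&c\end{pmatrix}w(1)\begin{pmatrix}1&c^{-1}\\0&1\end{pmatrix}$, one is reduced to the action of $w(1)$ on $\mathcal{K}(\rho,\psi)$, which is governed by the Jacquet--Langlands local functional equation together with its $\varepsilon$- and $L$-factors; carrying out the count then gives $\dim\rho^{(\chi_p,K_0(p^n))}=n-\mathfrak{c}(\rho)+1$ for every $n\geq\mathfrak{c}(\rho)$, and in particular $m_{\sigma(\mathfrak{c}(\rho))}(\rho)=1$. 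Alternatively one argues through the classification of infinite-dimensional admissible $\rho$ case by case (twists of principal series, where $\rho|_K$ is an explicit representation of $K$ induced from its Borel; twists of Steinberg, via the short exact sequence with the one-dimensional quotient; and supercuspidals, the easiest case since there $\mathcal{K}(\rho,\psi)=C_c^{\infty}(\mathbb{Q}_p^\times)$).

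The main obstacle is precisely this second step. The passage from the $G$-statement to a $K$-type multiplicity is formal once the corollary above is in hand, but showing that the multiplicity of $\sigma(\mathfrak{c}(\rho))$ is $1$ is not: it requires genuine input from the local theory of $GL_2(\mathbb{Q}_p)$ --- the explicit action of $w(1)$ on the Kirillov model, equivalently the local functional equation and the $\varepsilon$-factor that pin down the new-vector line. This is the substance of Casselman's original argument in \cite{CassLehn}, to which one appeals beyond the reduction above.
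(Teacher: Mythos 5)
This statement is quoted in the paper as an external result of Casselman (\cite{CassLehn}, Theorem 1); the paper supplies no proof of it, so there is no internal argument to compare yours against. Judged on its own terms, your reduction is sound and fits the paper's framework well: using the multiplicity-freeness of $I(n)$ and the corollary that each $\sigma(k)$ with $r\leq k\leq n$ carries exactly a one-dimensional space of $(\chi_p,K_0(p^n))$-fixed vectors while $\sigma(k)$ with $k>n$ carries none, the identity $\dim\rho^{(\chi_p,K_0(p^n))}=\sum_{k=r}^{n}m_{\sigma(k)}(\rho)$ is correct, and it does convert Casselman's uniqueness statement into the assertion $m_{\sigma(\mathfrak{c}(\rho))}(\rho)=1$. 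Your factorization of the unipotent lower-triangular element through $w(1)$ is also correct, and the Kirillov-model setup (support on $\mathbb{Z}_p$, $\mathbb{Z}_p^{\times}$-dilation invariance) is the right picture.

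The gap is the one you yourself name: the multiplicity-one statement for the minimal $K$-type is the entire content of the theorem, and your proposal does not prove it --- it records that the action of $w(1)$ on the Kirillov model is ``governed by'' the local functional equation and $\varepsilon$-factors and that ``carrying out the count'' gives $\dim\rho^{(\chi_p,K_0(p^n))}=n-\mathfrak{c}(\rho)+1$, but that count is precisely Casselman's argument and is nowhere carried out. So what you have is a correct and useful reduction plus an explicit appeal to \cite{CassLehn} for the decisive step, i.e.\ an outline rather than a proof. Since the paper itself uses the theorem as a black box, this is acceptable as commentary on how the result sits relative to the Hecke-algebra computations of Section 2, but it should not be presented as an independent proof. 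If you want to close the gap without the full functional-equation machinery, the cleanest route is the case-by-case one you mention last (principal series via Mackey/Iwasawa decomposition of $\rho|_K$, special representations via the exact sequence, supercuspidals via $\mathcal{K}(\rho,\psi)\supseteq C_c^{\infty}(\mathbb{Q}_p^{\times})$ with finite-dimensional quotient zero), but each case still requires real work that the sketch does not contain.
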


We can now explicitly describe these irreducible representations of $K$. Consider the left action of $\mathcal{H}(K/\!\!/K_0(p^n),\chi_p)$ on $I(n)$ given by $\pi_L(\phi)(f)(x)=\phi*f(x)$ with $f\in I(n), \phi \in \mathcal{H}(K/\!\!/K_0(p^n),\chi_p)$. Since this action commutes with $\pi_R$ it follows from Schur's Lemma that $\mathcal{H}(K/\!\!/K_0(p^n),\chi_p)$ acts via scalar multiplication on the irreducible components of $I(n)$. Let $\sigma$ be such a component which contains the fixed vector $v_{\sigma}$, then $v_{\sigma}$ is a common eigenvector for every basis element of $\mathcal{H}(K/\!\!/K_0(p^n),\chi_p)$. We compute these eigenvectors for each $\sigma(k), k\in\{r,...,n\}$. 
\begin{prop}
	A basis of eigenvectors for $\mathcal{H}(K/\!\!/K_0(p^n),\chi_p)$ under the action $\pi_L$ when $r>0$ is:\\$v_r=\mathcal{Y}_{r}^{\chi_p},$\\ $v_{r+1}=\mathcal{Y}_r^{\chi_p}-p\mathcal{Y}_{r+1}^{\chi_p},$ \\ \vdots \\ $v_{n}=\mathcal{Y}_{n-1}^{\chi_p}-p\mathcal{Y}_{n}^{\chi_p}$.
	\end{prop}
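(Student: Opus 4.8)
The plan is to reduce everything to the multiplication rule for the generators and then verify the eigenvector property by a short direct computation. Since $\mathcal{H}(K/\!\!/K_0(p^n),\chi_p)$ is commutative and generated as an algebra by $\mathcal{Y}_r^{\chi_p},\dots,\mathcal{Y}_n^{\chi_p}$, and since $\pi_L$ is an algebra homomorphism (left convolution: $\pi_L(\phi_1)\pi_L(\phi_2)=\pi_L(\phi_1*\phi_2)$), a vector is a common $\pi_L$-eigenvector if and only if it is a simultaneous eigenvector for left convolution by each $\mathcal{Y}_\ell^{\chi_p}$; so it suffices to test the $n-r+1$ candidates $v_r,\dots,v_n$ against the generators, all computations taking place inside the subalgebra $\mathcal{H}(K/\!\!/K_0(p^n),\chi_p)$ itself. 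For this I will use the relation from the previous proposition written symmetrically as $\mathcal{Y}_a^{\chi_p}*\mathcal{Y}_b^{\chi_p}=p^{\,n-\max(a,b)}\,\mathcal{Y}_{\min(a,b)}^{\chi_p}$ for $a,b\ge r$ (so in particular $\mathcal{Y}_n^{\chi_p}$ is the identity and acts as $1$).

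First, $v_r=\mathcal{Y}_r^{\chi_p}$ gives $\mathcal{Y}_\ell^{\chi_p}*v_r=p^{\,n-\ell}\mathcal{Y}_r^{\chi_p}=p^{\,n-\ell}v_r$ for all $\ell\ge r$. For $k\in\{r+1,\dots,n\}$ and $v_k=\mathcal{Y}_{k-1}^{\chi_p}-p\,\mathcal{Y}_k^{\chi_p}$ I split into three cases. If $\ell<k$, then $\mathcal{Y}_\ell^{\chi_p}*\mathcal{Y}_{k-1}^{\chi_p}=p^{\,n-k+1}\mathcal{Y}_\ell^{\chi_p}$ and $\mathcal{Y}_\ell^{\chi_p}*\mathcal{Y}_k^{\chi_p}=p^{\,n-k}\mathcal{Y}_\ell^{\chi_p}$, so the coefficient of $\mathcal{Y}_\ell^{\chi_p}$ in $\mathcal{Y}_\ell^{\chi_p}*v_k$ is $p^{\,n-k+1}-p\cdot p^{\,n-k}=0$; if $\ell\ge k$ then the two products are $p^{\,n-\ell}\mathcal{Y}_{k-1}^{\chi_p}$ and $p^{\,n-\ell}\mathcal{Y}_k^{\chi_p}$ with the common factor $p^{\,n-\ell}$ (here for $\ell=k$ one uses $\mathcal{Y}_k^{\chi_p}*\mathcal{Y}_{k-1}^{\chi_p}=p^{\,n-k}\mathcal{Y}_{k-1}^{\chi_p}$), so $\mathcal{Y}_\ell^{\chi_p}*v_k=p^{\,n-\ell}v_k$. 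Hence each $v_k$ is a common eigenvector, the eigenvalue of $\mathcal{Y}_\ell^{\chi_p}$ on $v_k$ being $0$ for $\ell<k$ and $p^{\,n-\ell}$ for $\ell\ge k$; the resulting eigenvalue tuples, with exactly $k-r$ leading zeros in the ordering $(\mathcal{Y}_r^{\chi_p},\dots,\mathcal{Y}_n^{\chi_p})$, are pairwise distinct, so the $v_k$ lie in pairwise distinct irreducible constituents of $I(n)$. By the earlier count there are exactly $n-r+1$ such constituents, each with a one-dimensional fixed space, so $v_r,\dots,v_n$ are, up to scalars, precisely the fixed vectors $v_{\sigma(r)},\dots,v_{\sigma(n)}$.

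It remains to see that $\{v_r,\dots,v_n\}$ is a basis, for which linear independence suffices since $\dim\mathcal{H}(K/\!\!/K_0(p^n),\chi_p)=n-r+1$. Expressing the $v_k$ in the basis $(\mathcal{Y}_r^{\chi_p},\dots,\mathcal{Y}_n^{\chi_p})$ gives a lower bidiagonal transition matrix with diagonal $(1,-p,\dots,-p)$, whose determinant $(-p)^{\,n-r}$ is nonzero. Equivalently (and more conceptually), $p^{-(n-r)}v_r$ together with the $-p^{-(n-k+1)}v_k$ for $k\ge r+1$ are the primitive idempotents of the commutative semisimple algebra $\mathcal{H}(K/\!\!/K_0(p^n),\chi_p)\cong\mathbb{C}^{\,n-r+1}$ --- one checks $v_r^2=p^{\,n-r}v_r$, $v_k^2=-p^{\,n-k+1}v_k$ and $v_jv_k=0$ for $j\ne k$ directly from the same multiplication rule --- so they form a basis of simultaneous $\pi_L$-eigenvectors automatically. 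I do not expect a genuine obstacle here, since the substantive work (the structure relations for the $\mathcal{Y}_\ell^{\chi_p}$) is already in place; the only thing requiring care is the bookkeeping of the exponents $n-\ell$, $n-k$, $n-k+1$ across the three-way case split.
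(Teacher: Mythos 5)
Your proof is correct and is exactly the argument the paper leaves implicit: the paper states the proposition without proof, relying on the relation $\mathcal{Y}_{j}^{\chi_p}*\mathcal{Y}_{\ell}^{\chi_p}=p^{n-j}\mathcal{Y}_{\ell}^{\chi_p}$ for $j\geq\ell$ established just before, and your case-by-case computation reproduces precisely the eigenvalue table ($0$ for $\ell<k$, $p^{n-\ell}$ for $\ell\geq k$) that the paper records afterward. The linear-independence check via the bidiagonal transition matrix and the identification with the primitive idempotents are both sound and complete the statement that these eigenvectors form a basis.
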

 The eigenvalues of the above action are given in the table bellow where the entry at the place $v_i,\mathcal{Y}_j^{\chi_p}$ is the eigenvalue $\lambda$ of the action of $\mathcal{Y}_j^{\chi_p}$ on $v_i$ i.e. $\mathcal{Y}_j^{\chi_p}(v_i)=\lambda v_i$. For example: $\mathcal{Y}_{r+k}^{\chi_p}(v_k)=p^{n-r-k} v_k$.

\begin{table}[ht]
\begin{tabular}{c|ccccccccc}
    & $\mathcal{Y}^{\chi_p}_r$ & $\mathcal{Y}^{\chi_p}_{r+1}$ & $\mathcal{Y}^{\chi_p}_{r+2}$ & $\mathcal{Y}^{\chi_p}_{r+3}$ & $\hdots$ & $\mathcal{Y}^{\chi_p}_{r+k}$ & $\hdots$ & $\mathcal{Y}^{\chi_p}_{n-1}$ & $\mathcal{Y}^{\chi_p}_{n}$  \\
[0.5ex]
\hline
$v_r$ & $p^{n-r}$ & $p^{n-r-1}$ & $p^{n-r-2}$ & $p^{n-r-3}$ & $\hdots$ & $p^{n-r-k}$ & $\hdots$ & $p$ & $1$ \\
$v_{r+1}$ & $0$ & $p^{n-r-1}$ & $p^{n-r-2}$ & $p^{n-r-3}$ & $\hdots$ & $p^{n-r-k}$ & $\hdots$ & $p$ & $1$ \\
$\vdots$ & $\vdots$ & $\vdots$ & $\vdots$ & $\vdots$ & $\vdots$ & $\vdots$ & $\vdots$ & $\vdots$ & $\vdots$ \\
$v_{r+k}$ & $0$ & $0$ & $0$ & $0$ & $\hdots$ & $p^{n-r-k}$ & $\hdots$ & $p$ & $1$ \\
$\vdots$ & $\vdots$ & $\vdots$ & $\vdots$ & $\vdots$ & $\vdots$ & $\vdots$ & $\vdots$ & $\vdots$ & $\vdots$ \\
$v_{n-1}$ & $0$ & $0$ & $0$ & $0$ & $\hdots$ & $0$  & $\hdots$ & $p$ & $1$ \\
$v_n$ & $0$ & $0$ & $0$ & $0$ & $\hdots$ & $0$ & $\hdots$ & $0$ & $1$
\end{tabular}
\end{table}

If we instead consider the eigenvalues of the action on the basis consisting of the elements $\mathcal{V}^{\chi_p}_j$, then with similar notation as above we get the table:

\begin{table}[ht]
\begin{tabular}{c|ccccccccc}
    & $\mathcal{V}^{\chi_p}_r$ & $\mathcal{V}^{\chi_p}_{r+1}$ & $\mathcal{V}^{\chi_p}_{r+2}$ & $\hdots$ & $\mathcal{V}^{\chi_p}_{n-1}$ & $\mathcal{V}^{\chi_p}_{n}$  \\
[0.5ex]
\hline
$v_r$ & $p^{n-r-1}(p-1)$ & $p^{n-r-2}(p-1)$ & $p^{n-r-3}(p-1)$  & $\hdots$  & $p-1$ & $1$ \\
$v_{r+1}$ & $-p^{n-r-1}$ & $p^{n-r-2}(p-1)$ & $p^{n-r-3}(p-1)$ & $\hdots$  & $p-1$ & $1$ \\
$v_{r+2}$ & $0$ & $-p^{n-r-2}$ & $p^{n-r-3}(p-1)$  & $\hdots$   & $p-1$ & $1$ \\
$\vdots$ & $\vdots$  & $\vdots$  & $\vdots$ & $\hdots$ & $\vdots$ & $\vdots$ \\
$v_{n-1}$ & $0$  & $0$ & $\vdots$ & $\hdots$ & $p-1$ & $1$ \\
$v_n$ & $0$  & $0$ & $\vdots$ & $\hdots$ & $-1$ & $1$
\end{tabular}
\end{table}
\begin{lemma}
    The operators $\pi_L(\mathcal{V}^{\chi_p}_j)$ with $r \leq j\leq n-1$ have trace $0$. 
\end{lemma}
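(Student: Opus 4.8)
The plan is to compute the trace of $\pi_L(\mathcal{V}^{\chi_p}_j)$ directly in the natural coset basis of $I(n)$ and to observe that every diagonal entry of the resulting matrix vanishes, simply because $\mathcal{V}^{\chi_p}_j$ is supported away from $K_0(p^n)$ when $j<n$.

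First I would fix representatives $x_1,\dots,x_N$ for the cosets in $K_0(p^n)\backslash K$, where $N=[K:K_0(p^n)]=p^{n-1}(p+1)$, and for each $x_\ell$ let $f_\ell\in I(n)$ be the function supported on $K_0(p^n)x_\ell$ with $f_\ell(k_0x_\ell)=\chi_p(k_0)$. These $f_\ell$ form a basis of $I(n)$, and since $f_\ell(x_m)=\delta_{\ell m}$ every $g\in I(n)$ expands as $g=\sum_\ell g(x_\ell)f_\ell$. Consequently the $(\ell,m)$-entry of the matrix of $\pi_L(\phi)$ in this basis is $(\pi_L(\phi)f_m)(x_\ell)$, so that
\[
\operatorname{tr}\pi_L(\phi)=\sum_{\ell=1}^{N}(\pi_L(\phi)f_\ell)(x_\ell).
\]
Next, with $\phi=\mathcal{V}^{\chi_p}_j$ and $r\le j\le n-1$, recall that $\mathcal{V}^{\chi_p}_j$ is supported on $K_0(p^n)y(p^j)K_0(p^n)$; writing this double coset as a disjoint union $\bigcup_i a_iK_0(p^n)$ of left cosets (by Lemma \ref{decomp} one may take $a_i=d(s_i)y(p^j)$ with $s_i$ ranging over $\mathbb{Z}_p^\times/(1+p^{n-j}\mathbb{Z}_p)$), Lemma \ref{conv} gives
\[
(\pi_L(\mathcal{V}^{\chi_p}_j)f_\ell)(x_\ell)=\sum_i \mathcal{V}^{\chi_p}_j(a_i)\,f_\ell(a_i^{-1}x_\ell).
\]
Now $f_\ell(a_i^{-1}x_\ell)\neq 0$ forces $a_i^{-1}x_\ell\in K_0(p^n)x_\ell$, i.e. $a_i\in K_0(p^n)$; but for $j\le n-1$ we have $y(p^j)\notin K_0(p^n)$, since its lower-left entry $p^j$ is not in $p^n\mathbb{Z}_p$, so the whole double coset $K_0(p^n)y(p^j)K_0(p^n)$ is disjoint from $K_0(p^n)$ and no $a_i$ can lie in $K_0(p^n)$. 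Hence each diagonal entry vanishes and $\operatorname{tr}\pi_L(\mathcal{V}^{\chi_p}_j)=0$.

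I do not expect a genuine obstacle here; the only points needing care are matching the convolution convention of Lemma \ref{conv} with the left action $\pi_L$ and the bookkeeping of left versus right cosets. As an independent sanity check one can instead use $I(n)=\bigoplus_{\ell=r}^{n}\sigma(\ell)$ together with Schur's lemma: $\pi_L(\mathcal{V}^{\chi_p}_j)$ acts on $\sigma(\ell)$ by the scalar read off from the eigenvalue table, which is $p^{n-j-1}(p-1)$ for $r\le \ell\le j$, equals $-p^{n-j-1}$ for $\ell=j+1$, and is $0$ for $\ell>j+1$; using $\dim\sigma(r)=p^{r-1}(p+1)$ and $\dim\sigma(\ell)=p^{\ell-2}(p^2-1)$ for $\ell>r$ one then obtains
\[
\operatorname{tr}\pi_L(\mathcal{V}^{\chi_p}_j)=p^{n-j-1}(p-1)\sum_{\ell=r}^{j}\dim\sigma(\ell)-p^{n-j-1}\dim\sigma(j+1),
\]
and since $\sum_{\ell=r}^{j}\dim\sigma(\ell)=\dim I(j)=p^{j-1}(p+1)$ while $\dim\sigma(j+1)=p^{j-1}(p^2-1)$, the two contributions cancel, again giving $0$.
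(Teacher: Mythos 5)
Your main argument is correct and is essentially the paper's own proof: both compute the trace in the basis of functions supported on single left cosets $K_0(p^n)x_\ell$ and observe that every diagonal entry $(\pi_L(\mathcal{V}^{\chi_p}_j)f_\ell)(x_\ell)$ vanishes because $K_0(p^n)y(p^j)K_0(p^n)$ is disjoint from $K_0(p^n)$ for $j\le n-1$. The only caveat concerns your optional sanity check, which implicitly uses the identification of the component containing $v_\ell$ with $\sigma(\ell)$ — a fact the paper only establishes \emph{after} (and by means of) this trace lemma — so it should not be offered as an independent proof; but since your primary argument does not rely on it, the proof stands.
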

\begin{proof}
 Let $\phi_g(x)=\begin{cases}
        \chi_p(k_0) & \text{, if } x=k_0g, k_0\in K_0(p^n)\\
        0 & \text{, otherwise}
    \end{cases}$, then these functions as $g$ runs through the coset representatives $y(p^i)$ form a basis of $I(n)$. It is sufficient then to prove that $\pi_L(\mathcal{V}^{\chi_p}_j)(\phi_g)(g)=0$. But $\pi_L(\mathcal{V}^{\chi_p}_j)(\phi_g)$ can only have support on $K_0(p^n)y(p^j)K_0(p^n)g$ and it is obvious that $g\not\in K_0(p^n)y(p^j)K_0(p^n)g$ which means $\pi_L(\mathcal{V}^{\chi_p}_j)(\phi_g)=0$. 
\end{proof}  
The above lemma and an observation of the table lead to the following conclusion for $r>1$:
\begin{cor}
The representation $I(n)$ is a sum of $n+1-r$ irreducible subspaces given by $T_k=Span(\pi_R(K)v_k), k\in\{r,...,n\}$. It also follows that $dim(T_k)=dim(\sigma(k)), k\in\{r,...,n\}$. It follows that $T_k=\sigma(k)$ since it is the unique irreducible subspace containing a unique $(\chi_p,K_0(p^k))-$fixed vector $u_k$ that is not $(\chi_p,K_0(p^{k-1}))-$fixed.
\end{cor}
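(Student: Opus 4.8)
The plan is to combine the $\pi_L$-eigenstructure of the vectors $v_r,\dots,v_n$ with the trace lemma and with the chain of $K$-submodules $I(r)\subseteq I(r+1)\subseteq\dots\subseteq I(n)$ of $I(n)$, the inclusion $I(m)\subseteq I(k)$ for $m\le k$ coming from $K_0(p^k)\subseteq K_0(p^m)$. First I would prove the decomposition $I(n)=\bigoplus_{k=r}^n T_k$ into distinct irreducibles: since $\pi_L$ commutes with $\pi_R$ and $I(n)$ is multiplicity free, every $\pi_L$-eigenvector lies in a single isotypic component, which is irreducible and is generated over $\pi_R$ by that eigenvector, so each $T_k=Span(\pi_R(K)v_k)$ is irreducible. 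Reading off the first table, the $n+1-r$ eigenvalue systems of $v_r,\dots,v_n$ are pairwise distinct -- the first nonzero $\mathcal{Y}^{\chi_p}_j$-eigenvalue of $v_{r+i}$ occurs at $j=r+i$ -- so the $T_k$ are pairwise non-isomorphic; as $v_r,\dots,v_n$ is a basis of $\mathcal{H}(K/\!\!/K_0(p^n),\chi_p)=I(n)^{(\chi_p,K_0(p^n))}$ and $I(n)$ has exactly $n+1-r$ irreducible components, each component contains exactly one line $\mathbb{C}v_k$ and hence equals $T_k$.

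Next I would compute the dimensions. Each $\phi\in\mathcal{H}(K/\!\!/K_0(p^n),\chi_p)$ acts by a scalar $\lambda_k(\phi)$ on $T_k$, so the trace of $\pi_L(\phi)$ on $I(n)$ is $\sum_{k=r}^n dim(T_k)\,\lambda_k(\phi)$. Taking $\phi=\mathcal{V}^{\chi_p}_{r+i}$ with $0\le i\le n-r-1$, the trace lemma makes the left side vanish, while the second table shows that the array $(\lambda_{r+m}(\mathcal{V}^{\chi_p}_{r+i}))$ is triangular; together with $\sum_{k}dim(T_k)=dim\,I(n)=p^{n-1}(p+1)$ this forces the recursion $dim(T_{r+i+1})=(p-1)\sum_{m=0}^{i}dim(T_{r+m})$, whose solution is $dim(T_r)=p^{r-1}(p+1)$ and $dim(T_{r+m})=p^{r+m-2}(p^2-1)$ for $m\ge1$, matching $dim(\sigma(r)),\dots,dim(\sigma(n))$.

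Finally I would identify $T_k$ with $\sigma(k)$ by the nesting argument. Using $\mathcal{Y}^{\chi_p}_\ell=\sum_{i=\ell}^n\mathcal{V}^{\chi_p}_i$ one rewrites $v_r=\mathcal{Y}^{\chi_p}_r$ and $v_k=\mathcal{V}^{\chi_p}_{k-1}-(p-1)\sum_{i=k}^n\mathcal{V}^{\chi_p}_i$ for $k>r$. A short computation with the double cosets $K_0(p^n)y(p^i)K_0(p^n)$ shows that, for $r\le i\le n-1$, the function $\mathcal{V}^{\chi_p}_i$ equals $\chi_p(d)$ at $x=\begin{pmatrix}a&b\\c&d\end{pmatrix}$ when the lower-left entry $c$ has $p$-adic valuation exactly $i$ and vanishes otherwise, while $\mathcal{V}^{\chi_p}_n$ equals $\chi_p(d)$ on $K_0(p^n)$; hence $v_k$ is supported on $K_0(p^{k-1})$, with value $\chi_p(d)$, $-(p-1)\chi_p(d)$ or $0$ according as the valuation of $c$ is $k-1$, is $\ge k$, or is $\le k-2$. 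Left multiplication by $K_0(p^k)$ keeps these three valuation regions separate and multiplies $\chi_p(d)$ by $\chi_p(k_0)$, so $v_k\in I(k)=Ind_{K_0(p^k)}^K\chi_p$, and therefore $T_m\subseteq I(m)\subseteq I(k)$ for all $m\le k$. Since $I(k)$ is again multiplicity free with exactly $dim\,\mathcal{H}(K/\!\!/K_0(p^k),\chi_p)=k-r+1$ irreducible components, the $k-r+1$ distinct irreducibles $T_r,\dots,T_k$ are all of them, so $I(k)=\bigoplus_{m=r}^k T_m$. For $k=r$ this gives $T_r=I(r)=\sigma(r)$; for $k>r$, comparing with $I(k-1)=\bigoplus_{m=r}^{k-1}T_m$ shows $T_k$ is the unique component appearing in $I(k)$ but not $I(k-1)$, which is $\sigma(k)$, and the $(\chi_p,K_0(p^k))$-fixed vector of $T_k$ that is not $(\chi_p,K_0(p^{k-1}))$-fixed is Casselman's distinguished vector $u_k$.

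The hard part will be the last paragraph: pinning down the exact values of $\mathcal{V}^{\chi_p}_i$ on $K_0(p^n)y(p^i)K_0(p^n)$ (which uses $i\ge r$ so that $\chi_p$ is trivial on the relevant principal units) and checking carefully that left translation by $K_0(p^k)$ does not mix the valuation regions ``$=k-1$'', ``$\ge k$'' and ``$\le k-2$'' -- this is exactly what puts $v_k$ in $I(k)$ while keeping it out of $I(k-1)$. When $p\ge3$ one can shortcut the identification directly from the dimension formula of the second paragraph, since then the $dim(\sigma(k))$ are pairwise distinct; the nesting argument is only really needed for $p=2$, where $dim(\sigma(r))=dim(\sigma(r+1))$.
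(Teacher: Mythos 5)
Your proposal is correct and follows essentially the same route as the paper: the decomposition into the $T_k$ via multiplicity-freeness, then the trace-zero lemma applied to the $\mathcal{V}^{\chi_p}_j$ eigenvalue table to produce exactly the linear system $(p-1)(d_r+\dots+d_{r+i})=d_{r+i+1}$, $\sum d_k=p^{n-1}(p+1)$ for the dimensions. Your third paragraph (the support analysis placing $v_k$ in $I(k)$ but not $I(k-1)$, plus the remark that for $p\geq 3$ the distinctness of the $\dim\sigma(k)$ already forces the matching) is a careful justification of what the paper compresses into the single sentence ``each vector $u_k$ is in a different irreducible component and $u_k\in T_k$.''
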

\begin{proof}
    From the table above, since the operators in question have trace $0$ we get the system of equations:
    \[(p-1)d_r-d_{r+1}=0\]
    \[(p-1)d_r+(p-1)d_{r+1}-d_{r+2}=0\] 
    \[\vdots\]
    \[(p-1)(d_r+\hdots+d_{n-1})-d_n=0\]
    \[d_r+\hdots+d_n=p^{n-1}(p+1)\]
    Where $d_k=dim(T_k)$. Solving this system we get the corresponding dimensions which match those of $\sigma(k)$. Noting that each vector $u_k$ is in a different irreducible component and $u_k\in T_k$ proves the claim.
\end{proof}

\section{From Adelic to Classical} \label{AdeltoCl}
Let $\mathbb{A}$ be the adele ring of $\mathbb{Q}$ and consider $G_{\mathbb{A}}=GL_2(\mathbb{A})$ and its center $Z_{\mathbb{A}}$.
We begin this section by stating the Strong Approximation Theorem from which we have: $G_{\mathbb{A}}=GL_2(\mathbb{Q})GL_2(\mathbb{R})^+\times K_0(N)$, where $K_0(N)=\prod\limits_{p<\infty}K_p$ with $K_p=GL_2(\mathbb{Z}_p)$ when $p\nmid N$ and $K_p=K_0(p^{n_p})$ at each place satisfying $p^{n_p}\mid\!\mid N$. Thus any $g\in G_{\mathbb{A}}$ may be written as $g=qg_{\infty}k_0$ where $q\in GL_2(\mathbb{Q}), g_{\infty}\in GL_2(\mathbb{R})^{+}$ and $k_0\in K_0(N)$ (meaning the corresponding projections inside the adeles). Moreover let $\chi_{id}$ be the idelic lift to $K_0(N)$ of the $\mod N$ Dirichlet character $\chi$.
\begin{defi}
    We denote by $A_k(N,\chi_{id})$ the space of functions \\ $\phi:GL_2(\mathbb{Q})\backslash G_{\mathbb{A}}\to\mathbb{C}$ satisfying:
    \begin{enumerate}
        \item $\phi(gk_0)=\phi(g)\chi_{id}(k_0), \forall g\in G_{\mathbb{A}}, k_0\in K_0(N)$
        \item $\phi(gr(\theta))=e^{-ik\theta}\phi(g), r(\theta)=\begin{bmatrix}
cos(\theta) & sin(\theta) \\
-sin(\theta) & cos(\theta) 
\end{bmatrix}$
\item $\Delta\phi=\frac{k}{2}(1-\frac{k}{2})\phi$
\item $\phi(zg)=\phi(g)\chi(z)$ for all $z\in Z_{\mathbb{A}}$
\item $\phi$ is slowly increasing
\item $\phi$ is cuspidal i.e. $\int_{\mathbb{Q}\backslash A}\phi(\begin{bmatrix}
1 & x \\
0 & 1 
\end{bmatrix}g)dx=0$.
    \end{enumerate}
\end{defi}
From \cite[(3.4)]{Gel} we obtain the following isomorphism of this space into the space of classical cuspforms:
\begin{prop}
        There exists an isomorphism $q:A_k(N,\chi_{id})\to S_{k}(\Gamma_0(N),\chi)$ given by sending $\phi \in A_k(N,\chi_{id})$ to $f\in S_k(\Gamma_0(N),\chi)$ where 
     \[\phi(g)=f(g_{\infty}i)j(g_{\infty},i)^{-k}\chi_{id}(k_0)\] and 
     \[f(z)=\phi(g_{\infty})j(g_{\infty},i)^k.\]
    \end{prop}
Let $\phi\in A_k(N,\chi_{id})$ then $\Phi\in \mathcal{H}(G/\!\!/K_0(p^n),\bar{\chi}_{id})$ acts on $\phi$ via: \[\Phi(\phi)(x)=\int_{G}\Phi(g)\phi(xg)dg.\] This implies that $\mathcal{H}(G/\!\!/K_0(p^n),\bar{\chi}_{id})\subseteq End_{\mathbb{C}}(A_k(N,\chi))$. In fact $q$ induces a map $q:End_{\mathbb{C}}(A_k(N,\chi_{id}))\to End_{\mathbb{C}}(S_k(\Gamma_0(N),\chi))$. The map explicitly is \[q(\Phi)(f)=q(\Phi(\phi)).\] We call $q$ the de-adelization of a $p-$adic Hecke operator $\Phi$ to a classical Hecke operator $q(\Phi)$.
\begin{lemma}
Let $\chi$ be a Dirichlet character $\mod N$ and write $\chi=\chi^{(p^k)}\chi^{(M)}$ where $N=p^kM, (p,M)=1$. The de-adelization of a central element of $\mathcal{H}(G/\!\!/K_0(p^n),\bar{\chi}_{id})$ acting on $A_k(N,\chi_{id})$ is: $q(\mathcal{Z}^n)(f)=\chi^{(M)}(p)^nf$. 
\end{lemma}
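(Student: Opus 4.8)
The plan is to unwind the definitions of the adelic action, the isomorphism $q$, and the structure of the strong approximation decomposition, and check that the operator $\mathcal{Z}^n = X_{z(p)}^{*n}$ simply translates $\phi$ by the central element $z(p^n) = \begin{pmatrix} p^n & 0 \\ 0 & p^n\end{pmatrix}$ at the place $p$. Concretely, $\mathcal{Z} = X_{z(p)}$ is the characteristic function of the double coset $K_0(p^n)z(p)K_0(p^n) = z(p)K_0(p^n)$ (since $z(p)$ is central, the double coset is a single coset of volume $1$ under the normalization giving $K_0(p^n)$ mass $1$), so $\mathcal{Z}^n(\phi)(x) = \int_G \mathcal{Z}^n(g)\phi(xg)\,dg = \chi_p\text{-weighted}$ translation which reduces to $\phi(x \cdot z(p^n)_p)$, where $z(p^n)_p$ denotes the element of $G_{\mathbb{A}}$ which is $z(p^n)$ in the $p$-component and the identity elsewhere.

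Next I would use property (4) of Definition (the central character relation $\phi(zg) = \phi(g)\chi(z)$ for $z \in Z_{\mathbb{A}}$) together with the product formula for idele class characters. Write the idele $z(p^n)_p$ as a product $z(p^n)_p = z(p^n)_{\mathbb{Q}} \cdot z(p^n)_\infty^{-1} \cdot \prod_{\ell \neq p} z(p^n)_\ell^{-1}$, i.e. factor out the principal (diagonal rational) idele $p^n \in \mathbb{Q}^\times \hookrightarrow Z_{\mathbb{A}}$. Since $\phi$ is left $GL_2(\mathbb{Q})$-invariant, multiplying on the left by the scalar matrix $p^n \in GL_2(\mathbb{Q})$ does nothing; hence $\phi(x\cdot z(p^n)_p)$ equals $\phi$ evaluated at $x$ times the idele that is $1$ at $p$, $p^{-n}$ at $\infty$, and $p^{-n}$ at each finite $\ell \neq p$. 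At the archimedean place $p^{-n} I$ acts trivially on $\phi$ up to the central character, and at the finite places $\ell \mid M$ the component $p^{-n} I \in K_\ell$ contributes $\chi_{id}(p^{-n} I)$ at those places, which by the decomposition $\chi = \chi^{(p^k)}\chi^{(M)}$ and the idelic-to-Dirichlet dictionary (a uniformizer at $\ell$ maps, via the inverse, to $\chi^{(\ell)}$ of that prime) collects into $\overline{\chi^{(M)}(p)}^{-n}$-type factors; being careful with the direction of the lift and with bars, the net scalar is $\chi^{(M)}(p)^n$. Applying $q$ and using that $q$ intertwines the two actions then gives $q(\mathcal{Z}^n)(f) = \chi^{(M)}(p)^n f$.

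The main obstacle I anticipate is purely bookkeeping: getting the normalization of Haar measure right so that $\mathcal{Z}$ acts as an honest translation (not a translation scaled by $\mathrm{vol}(K_0(p^n))$), and, more delicately, tracking the conventions for the idelic lift $\chi_{id}$ of the Dirichlet character $\chi$ — in particular whether a local uniformizer corresponds to $\chi^{(\ell)}(\ell)$ or its inverse, and which of $\chi$ or $\bar\chi$ appears because the Hecke algebra is taken with respect to $\bar\chi_{id}$ while $\phi$ transforms under $\chi_{id}$. These sign/bar conventions must be made consistent with the conventions fixed in the isomorphism $q$ of the preceding proposition; once that is pinned down the computation is a one-line consequence of $GL_2(\mathbb{Q})$-invariance and the central character property. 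A secondary point to handle cleanly is that $z(p^n)_p$ has trivial component at $p$ after factoring out the rational scalar, so there is genuinely no contribution from the place $p$ itself — which is why only $\chi^{(M)}$, and not $\chi^{(p^k)}$, survives.
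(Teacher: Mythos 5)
Your proposal is correct and follows essentially the same route as the paper: the operator acts by right translation by the central idele $z(p^n)$ supported at $p$, one factors out the principal rational idele $p^n$ (equivalently, uses triviality of $\chi_{id}$ on $\mathbb{Q}^\times$ / left $GL_2(\mathbb{Q})$-invariance), and the surviving local components at the places $\ell\mid M$ collect to $\chi^{(M)}(p)^n$ while the component at $p$ contributes $\chi^{(p^k)}(1)=1$. The only step you defer as ``bookkeeping,'' namely $\chi_{id}^{(\ell^{k_\ell})}(p^{-n}_\ell)=\overline{\chi}^{(\ell^{k_\ell})}(p^{-n})=\chi^{(\ell^{k_\ell})}(p^{n})$, is exactly the computation the paper carries out, and it resolves as you predicted.
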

\begin{proof}
Let $\Phi_f=q^{-1}(f)$ be the idelization of $f$ in $A_k(N,\chi_{id})$.
For the action of the center we have that $\int_{G}\Phi_f(xg)\mathcal{Z}^n(g)dg=\Phi_f(gz(p^n))$ which means $q(\mathcal{Z}^n)(f)(z)=\Phi_f(g_{\infty}k_0z(p^n))j(g_{\infty},i)^{k}=\\ \chi_{id}(1_2,1_3,...,p^n_p,...)\Phi_f(g_{\infty}k_0)j(g_{\infty},i)^{k}= \chi_{id}(1_2,1_3,...,p^n_p,...)f(z)$. But if $N=\prod\limits_{i=1}^mp_i^{k_i}$ then $\chi_{id}=\prod\limits_{i=1}^m\chi_{id}^{(p_i^{k_i})}$. Writing $(1_2,1_3,...,p^n_p,...)=p^n(p^{-n}_2,p^{-n}_3,...,1_p,...)$ we get $\chi_{id}^{(p_i^{k_i})}(1_2,1_3,...,p^n_p,...)=\overline{\chi}^{(p_i^{k_i})}(p^{-n})=\chi^{(p_i^{k_i})}(p^{n})$ except at $p_i=p$ where we get $\chi_{id}^{(p^k)}(p_2,p_3,...,1,...)=\chi^{(p^k)}(1)=1$. The claim now follows at once.
\end{proof}
\begin{lemma}
    With notation as before in $\mathcal{H}(K/\!\!/K_0(p^n),\bar{\chi}_{id})$ we have $q(\mathcal{V}_{\ell}^{\chi})(f)=\sum\limits_{s\in{\mathbb{Z}_p}^*/1+p^{n-\ell}\mathbb{Z}_p}\bar{\chi}(A_{s,\ell})f|A_{s,\ell}$. In particular $$q(\mathcal{Y}^{\chi}_{r})(f)=S_{p^n,r}(f)=f+\sum_{j=r}^{n-1}\sum_{s\in{\mathbb{Z}_p}^*/1+p^{n-j}\mathbb{Z}_p}\bar{\chi}(A_{s,j})f|A_{s,j}$$ where $A_{s,j}\in SL_2(\mathbb{Z})$ is any matrix of the form $\begin{pmatrix}
    a_{s,j} & b_{s,j} \\
    p^jM & p^{n-j}-sM
\end{pmatrix}$.
\end{lemma}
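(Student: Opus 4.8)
The strategy is to unwind the de-adelization map $q$ applied to $\mathcal{V}_\ell^{\chi}$ and $\mathcal{Y}_r^{\chi}$, reducing the adelic integral to a finite sum of classical slash operators, and then to identify the coset representatives explicitly. First I would use Lemma \ref{decomp}, which gives the coset decomposition $K_0(p^n)y(p^\ell)K_0(p^n)=\bigcup_{s\in\mathbb{Z}_p^{\times}/1+p^{n-\ell}\mathbb{Z}_p}d(s)y(p^\ell)K_0(p^n)$, to write the convolution integral defining $\pi_L(\mathcal{V}_\ell^{\chi})$ as a finite sum: for $\Phi_f=q^{-1}(f)$ one has $\mathcal{V}_\ell^{\chi}(\phi)(x)=\int_G \mathcal{V}_\ell^{\chi}(g)\phi(xg)\,dg=\sum_{s}\chi_p(\text{stuff})\,\phi(x\,d(s)y(p^\ell)\,k_0)$ where the $\chi_p$-factor collapses onto the character evaluated on the appropriate diagonal entry, exactly as in the eigenvalue computations of the previous section. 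Since $\chi_p(d(s))=1$, the surviving weight is $\bar\chi$ evaluated on the relevant $K_0(p^n)$-component, which is what produces the $\bar\chi(A_{s,\ell})$ coefficient.

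The second step is to translate each local element $d(s)y(p^\ell)$ (embedded at the place $p$, with identity elsewhere) into a classical matrix via the isomorphism $q:A_k(N,\chi_{id})\to S_k(\Gamma_0(N),\chi)$ of Proposition 6. Using strong approximation one writes the adelic element $d(s)y(p^\ell)$ as $\gamma\, g_\infty\, k_0$ with $\gamma\in GL_2(\mathbb{Q})$, and the rational part $\gamma$ — after clearing denominators — is forced to have the shape $\begin{pmatrix} a_{s,\ell} & b_{s,\ell}\\ p^\ell M & p^{n-\ell}-sM\end{pmatrix}$ with determinant $1$; this is the same bookkeeping Baruch–Purkait carry out in the trivial-character case, and the lower row is dictated by the requirement that the matrix lie in $K_p$-times-identity at every prime $q\neq p$ (forcing $q\mid$ lower-left entry for $q\mid M$) while reducing to $y(p^\ell)$ at $p$. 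The $j(g_\infty,i)^k$ and $\chi_{id}(k_0)$ factors from Proposition 6 reassemble precisely into the classical slash $f|A_{s,\ell}$ weighted by $\bar\chi(A_{s,\ell})$, where $\bar\chi(A_{s,\ell})$ means $\bar\chi$ of the lower-right entry $p^{n-\ell}-sM$ read as an element of $(\mathbb{Z}/N\mathbb{Z})^\times$.

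For the second assertion, I would simply observe $\mathcal{Y}_r^{\chi}=\sum_{i=r}^n\mathcal{V}_i^{\chi}$ by definition, that $\mathcal{V}_n^{\chi}$ is the identity of the algebra and de-adelizes to the identity operator on $S_k(\Gamma_0(N),\chi)$ (contributing the leading $f$), and that $q$ is linear, so $q(\mathcal{Y}_r^{\chi})(f)=f+\sum_{j=r}^{n-1}q(\mathcal{V}_j^{\chi})(f)$, which is the stated formula $S_{p^n,r}(f)$. The main obstacle I anticipate is the bookkeeping in the second step: pinning down that the rational representative $\gamma$ can be chosen in $SL_2(\mathbb{Z})$ of exactly the displayed form, and verifying that the character factor $\chi_{id}(k_0)$ coming out of strong approximation matches $\bar\chi(A_{s,\ell})$ for the classical matrix — one must track the contributions at $p$ (where $\chi^{(p^{n_p})}$ acts on the $p$-unit part of the lower-right entry) against the contributions at primes dividing $M$ (where $\chi^{(M)}$ acts), and confirm these combine into $\bar\chi$ evaluated at a single integer. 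Everything else is either a direct citation of Lemma \ref{decomp} and Proposition 6 or a routine linear-algebra verification analogous to \cite{BarSom}.
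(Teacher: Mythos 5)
Your plan follows the paper's proof essentially step for step: Lemma \ref{decomp} reduces the convolution integral to a finite sum over $s$, the observation that $A_{s,\ell}\,d(s)y(p^{\ell})\in K_0(N)$ (your strong-approximation decomposition of $d(s)y(p^{\ell})$ is the same computation read in the other direction) converts each adelic translate into $\bar{\chi}(A_{s,\ell})f|A_{s,\ell}$, and linearity of $q$ together with $\mathcal{V}_n^{\chi}$ being the identity gives the formula for $\mathcal{Y}_r^{\chi}$. This matches the paper's argument; the bookkeeping you flag as the remaining obstacle is exactly what the paper dispatches with the line "$A_{s,\ell}d(s)y(p^{\ell})\in K_0(N)$ and $\chi_{id}(A_{s,\ell}d(s)y(p^{\ell}))=\bar{\chi}(A_{s,\ell})$".
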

\begin{proof}
    We have $\mathcal{V}^{\chi}_{\ell}(\Phi)(x)=\int_G\Phi(xg)\mathcal{Y}^{\chi}_{\ell}(g)dg=\sum\limits_{s\in\mathbb{Z}^*_p/(1+p^{n-\ell}\mathbb{Z}_p)}\Phi(xd(s)y(p^{\ell}))$. We need to compute 
$q(\mathcal{V}^{\chi}_{\ell})(f)(z)=\sum\limits_{s\in\mathbb{Z}^*_p/(1+p^{n-\ell}\mathbb{Z}_p)}\Phi_f(g_{\infty}d(s)y(p^{\ell}))j(g_{\infty},i)^{-k}$. Let $A_{s,\ell}$ be as above, then $A_{s,\ell}d(s)y(p^{\ell})\in K_0(N)$ and it is easy to see that $\chi_{id}(A_{s,\ell}d(s)y(p^{\ell})=\bar{\chi}(A_{s,\ell})$. We then obtain $q(\mathcal{V}^{\chi}_{\ell})(f)(z)=\\ \sum\limits_{s\in\mathbb{Z}^*_p/(1+p^{n-\ell}\mathbb{Z}_p)}\chi_{id}(A_{s,\ell})\Phi_f(A_{s,\ell}g_{\infty})j(g_{\infty},i)^{-k}=\sum\limits_{s\in\mathbb{Z}^*_p/(1+p^{n-\ell}\mathbb{Z}_p)}\bar{\chi}(A_{s,\ell})f|_{A_{s,\ell}}(z)$. The formula for $\mathcal{Y}^{\chi}_{r}$ now follows immediately from the definition. 
\end{proof}
\section{Square-free Level Forms} \label{Sqfree}
Let $p|N,p^2\nmid N$ and $M=N/p$, then we can write $\chi=\chi^{(p)}\chi^{(M)}$ where $\chi^{(p)}$ is the Dirichlet character of $(\mathbb{Z}/p\mathbb{Z})^{\times}$ through which $\chi$ factors and similarly for $\chi^{(M)}$. By definition in order to have oldforms of level $M$, the space $\mathcal{S}_{k}(\Gamma_0(M),\chi)$ has to be nontrivial which can only hold if and only if $\chi$ is also a character of $\Gamma_0(M)$ (see \cite[Corollary 1]{Li}). This implies that $\mathcal{S}_{k}^{old}(\Gamma_0(N),\chi)$ is non-trivial if and only if $\chi$ is not primitive $\mod N$. When $N$ is square-free this is equivalent to $\chi^{(p)}\equiv 1$ for some prime divisor $p$ of $N$. From this it follows that the corresponding Hecke algebra is the Iwahori algebra $\mathcal{H}(G/\!\!/K_0(p))$. We now use the relation $\mathcal{U}_0=\mathcal{Z}^{-1}*\mathcal{T}_1*\mathcal{U}_1$. The results we obtained in Section 2 enable us to  define the Hecke operator $Q_p=q(\mathcal{U}_0)=q(\mathcal{Z}^{-1}\mathcal{T}_1\mathcal{U}_1)=\bar{\chi}^{(M)}(p)\tilde{U}_pW_p$ which from Theorem \ref{struct} satisfies a quadratic relation $(Q_p+1)(Q_p-p)=0$. We then have the following:

\begin{lemma} \label{Q_plem}
If $f\in \mathcal{S}_{k}^{new}(\Gamma_0(N),\chi)$ then $Q_p(f)(z)=-f(z)$.
\end{lemma}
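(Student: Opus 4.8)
The plan is to use the quadratic relation $(Q_p+1)(Q_p-p)=0$ coming from Theorem \ref{struct} (via the identification $Q_p = q(\mathcal{U}_0)$) to split $\mathcal{S}_k(\Gamma_0(N),\chi)$ into the $-1$ and $p$ eigenspaces of $Q_p$, and then to argue that the $p$-eigenspace is precisely the oldspace. Since Lemma \ref{Q_plem} only asserts one inclusion — that newforms lie in the $-1$-eigenspace — it suffices to show the oldspace is contained in the $p$-eigenspace; then the newspace, being a complement of the oldspace stable under $Q_p$, must land in the $-1$-eigenspace. First I would pass to the adelic picture: under the isomorphism $q:A_k(N,\chi_{id})\to\mathcal{S}_k(\Gamma_0(N),\chi)$, the operator $Q_p$ corresponds to the $p$-adic Hecke operator $\mathcal{U}_0 \in \mathcal{H}(G/\!\!/K_0(p),\bar\chi_{id})$, and the oldspace corresponds to the subspace of $\phi$ whose local component at $p$ is fixed (up to the character) by the full maximal compact $K=GL_2(\mathbb{Z}_p)$, i.e. spherical at $p$ (note $\chi^{(p)}\equiv 1$, so this makes sense).

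The key local computation is then: on a $GL_2(\mathbb{Z}_p)$-fixed vector $v$ in an unramified principal series (or unramified twist thereof), the Iwahori operator $\mathcal{U}_0$ acts by the scalar $p$. One way I would see this: $\mathcal{U}_0$ satisfies $(\mathcal{U}_0+1)(\mathcal{U}_0-p)=0$, so its only possible eigenvalues are $-1$ and $p$; on the $1$-dimensional space of spherical vectors it acts by a scalar, and that scalar cannot be $-1$ because the eigenvalue $-1$ eigenspace of $\mathcal{U}_0$ is exactly the "new at $p$" line (the Iwahori-spherical-but-not-spherical vectors of the Steinberg, by Casselman's theory of the conductor — the $\mathfrak{c}(\rho)=1$ case). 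Hence the spherical vector has $\mathcal{U}_0$-eigenvalue $p$, and this transfers to $Q_p(f) = p f$ for $f$ old at $p$. Alternatively, one can compute directly: $\mathcal{U}_0 = \mathcal{Z}^{-1}*\mathcal{T}_1*\mathcal{U}_1$, and decompose the double cosets $K_0(p)d(p)K_0(p)$ etc. into $K_0(p)$-cosets, evaluating the convolution against a spherical vector — the count of cosets gives the factor $p$ (after the central normalization, which on the de-adelized side contributes the $\bar\chi^{(M)}(p)$ twist handled in the Lemma on $q(\mathcal{Z}^n)$). I would pick whichever of these is cleaner in context, but the representation-theoretic argument is shorter and matches the paper's philosophy.

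Given the local statement, the global argument is short: write $f = f^{new}\oplus f^{old}$; on the adelic side $f^{old}$ is built from forms of level $M$ via the two degeneracy maps ($\phi\mapsto\phi$ and $\phi\mapsto\pi_R(d(p))\phi$ or the analogous classical $V_p$), and the span of these two lifts inside the local representation at $p$ is exactly the Iwahori-fixed space, which for an \emph{unramified} local component is $2$-dimensional with $\mathcal{U}_0$ acting... here I should be careful: on a $2$-dimensional Iwahori-fixed space of an irreducible unramified principal series, $\mathcal{U}_0$ is \emph{not} scalar — it has both eigenvalues $-1$ and $p$. So the correct statement is subtler: the oldspace is not contained in the $p$-eigenspace. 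This means the honest route is to argue only the stated direction. So the real plan for \emph{this} lemma is: take $f\in\mathcal{S}_k^{new}(\Gamma_0(N),\chi)$, lift to $\phi\in A_k(N,\chi_{id})$ generating an irreducible automorphic representation $\pi=\otimes\pi_v$ with $\pi_p$ having conductor exponent $1$ (since $\chi^{(p)}\equiv 1$ and $f$ is new at $p$, by Casselman $\mathfrak{c}(\pi_p)=1$, so $\pi_p$ is an unramified twist of Steinberg). On the $1$-dimensional $K_0(p)$-fixed line of Steinberg, $\mathcal{U}_0$ acts by $-1$ (this is the defining property of the new vector in the trace-$0$/sign analysis, or directly: Steinberg has $\mathcal{U}_1$-eigenvalue forcing, via $\mathcal{U}_1^2=\mathcal{Z}$ and the quadratic relation, the value $-1$). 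Transferring back through $q$ — and absorbing the central normalization via $q(\mathcal{Z}^{-1})$, which by the earlier Lemma contributes exactly the $\bar\chi^{(M)}(p)$ already built into the definition of $Q_p$ — gives $Q_p(f)=-f$.

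The main obstacle is the bookkeeping at the non-spherical place: one must check that being \emph{new} at $p$ (in the classical Atkin–Lehner sense, i.e. orthogonal to the images of the two level-$M$ degeneracy maps) corresponds precisely to $\pi_p$ being the twisted Steinberg with its $1$-dimensional $K_0(p)$-line, rather than an unramified principal series (whose $K_0(p)$-fixed vectors are $2$-dimensional and contribute to the oldspace). This is where Casselman's conductor theorem (quoted in the excerpt) does the work, together with the fact that $\chi^{(p)}\equiv 1$ forces the conductor to be $p^0$ or $p^1$. Once that identification is in hand, the eigenvalue $-1$ is forced by the structure theorem for $\mathcal{H}(G/\!\!/K_0(p))$ together with $\mathcal{U}_1^2=\mathcal{Z}$ and the relation $\mathcal{U}_0=\mathcal{Z}^{-1}*\mathcal{T}_1*\mathcal{U}_1$, and de-adelization finishes the proof.
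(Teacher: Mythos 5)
Your final argument is correct, but it takes a genuinely different route from the paper. The paper's proof is a short, purely classical computation: it takes $f$ to be an eigen-newform, invokes Li's Theorem 3 twice --- once for the Atkin--Lehner pseudo-eigenvalue $W_p(f)=-a(p)p^{1-\frac{k}{2}}f$ and once for the relation $a(p)^2=\chi^{(M)}(p)p^{k-2}$ (both valid precisely because $\chi^{(p)}\equiv 1$) --- and substitutes into $Q_p=\bar{\chi}^{(M)}(p)\tilde{U}_pW_p$ to get $-f$. You instead pass to the local component $\pi_p$, use Casselman's conductor theorem plus $\chi^{(p)}\equiv 1$ to identify $\pi_p$ as an unramified twist of Steinberg, and read off the $\mathcal{U}_0$-eigenvalue $-1$ on its one-dimensional Iwahori-fixed line before de-adelizing; this is essentially the representation-theoretic proof \emph{of} Li's Theorem 3, so the two arguments are close cousins, with yours importing the full classical-to-adelic newform dictionary where the paper simply cites Li. Your justification of the local eigenvalue via ``$\mathcal{U}_1^2=\mathcal{Z}$'' is not quite the right mechanism --- the clean way to rule out the eigenvalue $p$ is to note that $1+\mathcal{U}_0$ is the characteristic function of $K$, so a $p$-eigenvector would project nontrivially onto the spherical vectors, contradicting conductor exponent $1$ --- but the claim itself is standard and correct. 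Finally, your first two paragraphs pursue a strategy (oldspace contained in the $p$-eigenspace of $Q_p$) that is false, as you yourself notice: only $\mathcal{S}_k(\Gamma_0(M),\chi)$ lies in the $p$-eigenspace of $Q_p$, while $V(p)\mathcal{S}_k(\Gamma_0(M),\chi)$ lies in that of $Q_p'$, which is exactly why the paper needs both operators in Theorem \ref{sqfr}; since you self-correct and the argument you ultimately commit to is sound, this is a detour rather than a gap.
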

\begin{proof}
We first note that $\mathcal{S}_{k}(\Gamma_0(N),\chi)^{new}$ has a basis consisting of eigenfunctions and thus it is sufficient to prove the result for these basis elements. Let $f\in \mathcal{S}_{k}^{new}(\Gamma_0(N),\chi)$ be an eigenform then, by \cite[Theorem 3]{Li} we obtain that $W_p(f)(z)=-a(p)p^{1-\frac{k}{2}}f(z)$. This implies that \\ $Q_p(f)(z)=-p^{1-\frac{k}{2}}\bar{\chi}^{(M)}(p)a(p)p^{1-\frac{k}{2}}\tilde{U}_p(f)(z)=-p^{2-k}a(p)^2\bar{\chi}^{(M)}(p)f(z)$. Using \cite[Theorem 3]{Li} again yields $a(q)^2=\chi_{M}(p)p^{k-2}$ and thus $Q_p(f)(z)=-f(z)$. 
\end{proof}
From now on we always assume that $\chi=\chi^{(p)}\chi^{(M)}$ with $\chi^{(p)}\equiv 1$ so that $\mathcal{S}_{k}(\Gamma_0(M),\chi)\ne\{0\}$, otherwise there are no oldforms coming from level $M$ forms. 
\begin{lemma} \label{lem2}
Let $f\in \mathcal{S}_{k}(\Gamma_0(M),\chi)$ then $Q_p(f)(z)=pf(z)$. 
\end{lemma}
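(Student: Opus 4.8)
The plan is to exploit the quadratic relation $(Q_p+1)(Q_p-p)=0$ from Theorem~\ref{struct} together with Lemma~\ref{Q_plem}. Since $Q_p$ satisfies a polynomial with distinct roots $-1$ and $p$, the space $\mathcal{S}_k(\Gamma_0(N),\chi)$ decomposes as a direct sum of the $-1$-eigenspace and the $p$-eigenspace of $Q_p$. By Lemma~\ref{Q_plem}, $\mathcal{S}_k^{new}(\Gamma_0(N),\chi)$ lies entirely in the $-1$-eigenspace. On the other hand, $\mathcal{S}_k(\Gamma_0(N),\chi) = \mathcal{S}_k^{new}(\Gamma_0(N),\chi) \oplus \mathcal{S}_k^{old}(\Gamma_0(N),\chi)$, and under the standing assumption $\chi^{(p)}\equiv 1$ the oldspace is spanned by $f(z)$ and $f(pz)$ for $f$ ranging over a basis of eigenforms of $\mathcal{S}_k(\Gamma_0(M),\chi)$. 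So it suffices to compute $Q_p$ on these two types of generators and show the result lands in the $p$-eigenspace; the statement as written handles the first generator $f(z)$ itself.

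First I would fix an eigenform $f\in\mathcal{S}_k(\Gamma_0(M),\chi)$ with Hecke eigenvalue $a(p)$ at $p$ (here $p\nmid M$, so $a(p)$ is the usual $T_p$-eigenvalue), and unwind the definition $Q_p = \bar\chi^{(M)}(p)\,\tilde U_p W_p = \bar\chi^{(M)}(p)\, p^{1-\frac{k}{2}} U_p W_p$. The key classical inputs are the standard formulas for how $W_p$ and $U_p$ act on a level-$M$ form viewed inside level $N=pM$: since $f$ has level $M$ coprime to $p$, $W_p$ relates $f(z)$ and $f(pz)$ up to scalars, and $U_p$ acting on a level-$M$ eigenform satisfies the relation $U_p f = a(p) f - \chi^{(M)}(p) p^{k-1}\, f(p\cdot)$ (or, in the normalization $\tilde U_p$, the analogous identity), coming from the Euler factor at $p$. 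Combining these two and tracking the powers of $p$ and the character value $\bar\chi^{(M)}(p)$ should collapse everything to $Q_p(f)(z) = p\,f(z)$. Concretely I expect the computation to mirror the proof of Lemma~\ref{Q_plem} but with the roles of the eigenvalue relation reversed: there $a(p)^2 = \chi^{(M)}(p)p^{k-2}$ was used for a newform, whereas here the level-$M$ form is precisely \emph{not} $p$-new, so instead of a quadratic relation on $a(p)$ one uses the full two-dimensional action of $\langle U_p, W_p\rangle$ on the old block.

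The main obstacle will be bookkeeping rather than conceptual: getting every normalization constant right — the $p^{\frac{k}{2}-1}$ in $U_p$, the $p^{1-k}$ in $\tilde U_p = p^{1-k}U_p$, the determinant factors $(ad-bc)^{k/2}$ hidden in the slash operator, and the idelic-versus-classical normalization of $W_p$ — so that the scalar comes out as exactly $p$ and not $p$ times a spurious power. A secondary subtlety is making sure the Atkin--Lehner operator $W_p$ at squarefree level, which on a level-$M$ form is not an involution in the naive sense but sends $\mathcal{S}_k(\Gamma_0(M),\chi)$ into the level-$N$ space, is handled with the correct formula; here one can cite \cite[Theorem~3]{Li} or the relevant Atkin--Lehner--Li identity. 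Once the constants are pinned down the identity $Q_p(f)=pf$ is immediate, and combined with Lemma~\ref{Q_plem} this will later give that the $-1$-eigenspace of $Q_p$ is \emph{exactly} $\mathcal{S}_k^{new}(\Gamma_0(N),\chi)$ plus whatever part of the oldspace is not generated this way, which is the next step toward Theorem~\ref{mainth}.
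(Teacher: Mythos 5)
Your plan is, at its core, the same direct computation the paper performs: apply $W_p$ to the level-$M$ form $f$, observe that the result is essentially $f$ precomposed with $z\mapsto pz$ up to the scalar $p^{k/2}\chi^{(M)}(p)$, and then note that $\tilde U_p$ undoes the dilation. The paper does this in one step, writing $W_p(f)\bigl(\tfrac{z+s}{p}\bigr)=p^{k/2}f|\left(\begin{smallmatrix}\beta & 1\\ M\gamma & p\end{smallmatrix}\right)(z+s)=p^{k/2}\chi^{(M)}(p)f(z)$ since that matrix lies in $\Gamma_0(M)$ and $f$ is $1$-periodic, so each of the $p$ terms of the $U_p$-sum contributes the same constant and the prefactor $\bar\chi^{(M)}(p)p^{-k/2}$ collapses everything to $pf$. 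Two remarks on your version. First, the eigenform assumption and the Euler-factor relation $U_pf=a(p)f-\chi^{(M)}(p)p^{k-1}V(p)f$ are superfluous and mildly misleading: in $Q_p=\bar\chi^{(M)}(p)\tilde U_pW_p$ the operator $U_p$ is applied \emph{after} $W_p$, i.e.\ to a multiple of $V(p)f$, on which $U_p$ acts by the trivial identity $U_pV(p)=p^{k/2}\cdot\mathrm{id}$; no $a(p)$ ever enters, which is consistent with the eigenvalue $p$ being independent of $f$, and the lemma holds for all of $\mathcal{S}_k(\Gamma_0(M),\chi)$, not just eigenforms. If you instead applied the operators in the other order you would be computing $Q_p'$, which is a different statement (Corollary~\ref{corol}). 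Second, the opening paragraph about the quadratic relation and the eigenspace decomposition is context rather than proof and does nothing toward establishing where the level-$M$ block sits; only the explicit calculation does. With the order of composition fixed and the constants tracked as above, your outline completes correctly.
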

\begin{proof}
We have that $W_p(f)(\frac{z+s}{p})=p^{\frac{k}{2}}f|\begin{pmatrix}
    \beta & 1 \\
    M\gamma & p
\end{pmatrix}(z+s)=\\ p^{\frac{k}{2}}f|\begin{pmatrix}
    \beta & 1 \\
    M\gamma & p
\end{pmatrix}(z)$. And since $\begin{pmatrix}
    \beta & 1 \\
    M\gamma & p
\end{pmatrix}\in\Gamma_0(M)$ we get that \\ $p^{\frac{k}{2}}f|\begin{pmatrix}
    \beta & 1 \\
    M\gamma & p
\end{pmatrix}(z)=p^{\frac{k}{2}}\chi^{(M)}(p)f(z)$. Hence: $Q_p(f)(z)= \\ \bar{\chi}^{(M)}(p)p^{-\frac{k}{2}}\sum\limits_{s=0}^{p-1}W_p(f)(\frac{z+s}{p})=\bar{\chi}^{(M)}(p)p^{-\frac{k}{2}}\sum\limits_{s=0}^{p-1}p^{\frac{k}{2}}\chi^{(M)}(p)f(z)=pf(z)$.
\end{proof}
Consider now the operator $Q_p'=W_pQ_pW_p^{-1}$ formed by conjugating $Q_p$ with $W_p$. Then $(Q_p'+1)(Q_p'-p)=0$ and $Q_p'=W_p\tilde{U}_p$. Since $W_p$ is a surjective endomorphism of $\mathcal{S}_{k}^{new}(\Gamma_0(N),\chi)$ when $\chi^{(p)}\equiv 1$ we immediately have the following lemma:
\begin{lemma} \label{Qp'lem}
If $f\in \mathcal{S}_{k}^{new}(\Gamma_0(N),\chi)$ then $Q_p'(f)(z)=-f(z)$. 
\end{lemma}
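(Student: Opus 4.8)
The plan is to deduce this immediately from Lemma~\ref{Q_plem}, which says $Q_p(f)=-f$ for newforms, together with the structural fact recalled just above: when $\chi^{(p)}\equiv 1$, the Atkin--Lehner involution $W_p$ restricts to a \emph{surjective} endomorphism of $\mathcal{S}_{k}^{new}(\Gamma_0(N),\chi)$. Since this space is finite dimensional, surjectivity forces bijectivity, so $W_p$ is an automorphism of the new subspace and in particular $W_p^{-1}$ again carries newforms to newforms. (Equivalently: $W_p^2$ acts as a nonzero scalar, so $W_p^{-1}$ is a scalar multiple of $W_p$ and hence inherits the preservation of the new subspace.)

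With that in hand the argument is purely formal. Given $f\in\mathcal{S}_{k}^{new}(\Gamma_0(N),\chi)$, set $g=W_p^{-1}(f)$, which by the previous paragraph again lies in $\mathcal{S}_{k}^{new}(\Gamma_0(N),\chi)$. By definition of the conjugate operator, $Q_p'(f)=W_pQ_pW_p^{-1}(f)=W_p\bigl(Q_p(g)\bigr)$. Applying Lemma~\ref{Q_plem} to the newform $g$ gives $Q_p(g)=-g$, so $Q_p'(f)=W_p(-g)=-W_p(g)=-W_pW_p^{-1}(f)=-f$, which is the claim. (The quadratic relation $(Q_p'+1)(Q_p'-p)=0$ is obtained in the same way by conjugating $(Q_p+1)(Q_p-p)=0$, but it is not needed here.) An alternative route, avoiding the explicit use of $W_p^{-1}$, is to use that $\mathcal{S}_{k}^{new}(\Gamma_0(N),\chi)$ has a basis of newform eigenforms, each of which is a $W_p$-eigenvector with nonzero eigenvalue $\lambda=-a(p)p^{1-k/2}$ (by \cite{Li}, as already invoked in the proof of Lemma~\ref{Q_plem}); then for such an $f$, $Q_p'(f)=W_pQ_pW_p^{-1}(f)=\lambda^{-1}W_pQ_p(f)=\lambda^{-1}W_p(-f)=-f$.

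The only genuine content lies behind the one-line claim preceding the lemma, namely that $W_p$ preserves (and surjects onto) the new subspace. This is Atkin--Lehner--Li theory: $W_p$ normalizes $\Gamma_0(N)$ and, because $\chi^{(p)}\equiv 1$ implies $\overline{\chi^{(p)}}\equiv 1$, it maps $\mathcal{S}_k(\Gamma_0(N),\chi)$ to itself rather than to a space with conjugated character; it is compatible with the level-raising maps cutting out the old subspace, hence preserves both the old and the new part (cf.\ \cite{Li}). I expect no real obstacle beyond correctly citing this preservation property and keeping track of the normalizing scalar of $W_p$ so that $W_p^{-1}$ literally makes sense; everything after that is bookkeeping.
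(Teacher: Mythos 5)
Your proposal is correct and is essentially the paper's own argument: the paper derives the lemma "immediately" from the surjectivity of $W_p$ on $\mathcal{S}_{k}^{new}(\Gamma_0(N),\chi)$ together with Lemma~\ref{Q_plem}, exactly as you do by writing $f=W_p(g)$ with $g$ new. Your expansion of the one-line justification (finite dimensionality forcing bijectivity, and the eigenvalue alternative via \cite{Li}) is sound and adds nothing that conflicts with the paper.
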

For the space of oldforms we have the following decomposition: \\ $\mathcal{S}_{k}^{old}(\Gamma_0(N),\chi)=\sum\limits_{\text{primes }p|N:\chi^{p}\equiv 1}\mathcal{S}_{k}(\Gamma_0(N/p),\chi^{(N/p)})\oplus V(p)\mathcal{S}_{k}(\Gamma_0(N/p),\chi^{(N/p)})$, \\ where $V(p)f(z)=f(pz)$. Now we will examine how $W_p$ acts on each of these components.
\begin{lemma} \label{lem1}
    Let $f\in \mathcal{S}_{k}^{old}(\Gamma_0(M),\chi)$, then $W_p(f)(z)=p^{\frac{k}{2}}\chi^{(M)}(p)f(pz)$. If $g(z)=f(pz)$ then $W_p(g)(z)=p^{-\frac{k}{2}}f(z)$. Consequently $W_p$ maps $\mathcal{S}_{k}(\Gamma_0(M),\chi)$ to $V(p)\mathcal{S}_{k}(\Gamma_0(M),\chi)$ isomorphically. 
\end{lemma}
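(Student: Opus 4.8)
The plan is to reduce both displayed identities to two one‑line matrix factorizations, using that the weight‑$k$ slash operator is a genuine right action of $GL_2(\mathbb{R})^+$ (so $f|(A_1A_2)=(f|A_1)|A_2$) and that scalar matrices slash trivially. Throughout write $M=N/p$; since $p\parallel N$ the Atkin--Lehner datum $p^2\beta-N\gamma=p$ reads $p\beta-M\gamma=1$, so $W_p(f)=f|W$ with $W=\begin{pmatrix}p\beta&1\\ pM\gamma&p\end{pmatrix}$, $\det W=p$. Set $D=\begin{pmatrix}p&0\\0&1\end{pmatrix}$ and note $V(p)f(z)=f(pz)=p^{-k/2}(f|D)(z)$.

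For the first formula I would factor $W$ on the right: $W=AD$ with $A=\begin{pmatrix}\beta&1\\ M\gamma&p\end{pmatrix}$, where $\det A=p\beta-M\gamma=1$ and $M\mid M\gamma$, so $A\in\Gamma_0(M)$. Then $W_p(f)=(f|A)|D=\chi^{(M)}(p)\,(f|D)=p^{k/2}\chi^{(M)}(p)f(pz)$, the middle equality being the modularity relation of $f$ at level $M$ with character $\chi^{(M)}$ applied to $A$ (whose lower‑right entry is $p$, with $p\nmid M$). For the second identity, with $g=V(p)f=p^{-k/2}(f|D)$ I would factor on the left: $DW=\begin{pmatrix}p^2\beta&p\\ pM\gamma&p\end{pmatrix}=pB$ with $B=\begin{pmatrix}p\beta&1\\ M\gamma&1\end{pmatrix}\in\Gamma_0(M)$ (again $\det B=1$ and lower‑left divisible by $M$). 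Hence $W_p(g)=g|W=p^{-k/2}f|(DW)=p^{-k/2}\bigl(f|(pI_2)\bigr)|B=p^{-k/2}(f|B)=p^{-k/2}f$, using $f|(pI_2)=f$ and that the lower‑right entry of $B$ is $1$.

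The isomorphism statement then follows formally: the first formula shows $W_p$ carries $\mathcal{S}_{k}(\Gamma_0(M),\chi)$ into $V(p)\mathcal{S}_{k}(\Gamma_0(M),\chi)$, the second shows it carries $V(p)\mathcal{S}_{k}(\Gamma_0(M),\chi)$ back into $\mathcal{S}_{k}(\Gamma_0(M),\chi)$, and composing the two restrictions in either order yields multiplication by the nonzero constant $\chi^{(M)}(p)$; so each restriction has an inverse up to that scalar and is an isomorphism.

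The argument is entirely computational, so I expect no real obstacle; the only thing to watch is bookkeeping — the precise powers of $p$ produced by the $(\det)^{k/2}$ normalization of the slash operator, the fact that it is a right (not left) action, and the identification of the character acting on the level‑$M$ form $f$ as $\chi^{(M)}$ rather than $\chi$. This last point is exactly where the standing hypothesis $\chi^{(p)}\equiv1$ (equivalently, nontriviality of the level‑$M$ old space) enters, since otherwise $\chi$ is not a character of $\Gamma_0(M)$ and the statement is vacuous.
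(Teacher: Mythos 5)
Your proposal is correct and is essentially the paper's own argument: the paper likewise computes $W_p(f)(z)=p^{k/2}f|\bigl(\begin{smallmatrix}\beta&1\\ M\gamma&p\end{smallmatrix}\bigr)(pz)$ and $W_p(g)(z)=p^{-k/2}f|\bigl(\begin{smallmatrix}p\beta&1\\ M\gamma&1\end{smallmatrix}\bigr)(z)$, using that these auxiliary matrices lie in $\Gamma_0(M)$ and applying the character relation; your factorizations $W=AD$ and $DW=pB$ are just a cleaner bookkeeping of the same computation. The concluding isomorphism argument (the two restrictions compose to the nonzero scalar $\chi^{(M)}(p)$) also matches the paper's "inverse $\bar\chi^{(M)}(p)W_p$" remark.
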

\begin{proof}
	Let $f\in \mathcal{S}_{k}^{old}(\Gamma_0(M),\chi)$, then we have that\\ $W_p(f)(z)=p^{\frac{k}{2}}(M\gamma(pz)+p)^{-k}f(\frac{\beta(pz)+1}{M\gamma(pz)+p})=p^{\frac{k}{2}}f|\begin{pmatrix}
    \beta & 1 \\
    M\gamma & p
\end{pmatrix}(pz)=\\ p^{\frac{k}{2}}\chi^{(M)}(p)f(pz)$, since $\begin{pmatrix}
    \beta & 1 \\
    M\gamma & p
\end{pmatrix}\in\Gamma_0(M)$.
\par Conversely if $g(z)=f(pz), f\in \mathcal{S}_{k}^{old}(\Gamma_0(M),\chi)$ then similarly \\ $W_p(g)(z)=p^{\frac{k}{2}}(pM\gamma z+p)^{-k}f(p\frac{\beta(pz)+1}{M\gamma(pz)+p})=p^{-\frac{k}{2}}(M\gamma z+1)^{-k}f(\frac{\beta(pz)+1}{M\gamma z+1})=p^{-\frac{k}{2}}f|\begin{pmatrix}
    p\beta & 1 \\
    M\gamma & 1
\end{pmatrix}=p^{-\frac{k}{2}}f(z)$, since $\begin{pmatrix}
    p\beta & 1 \\
    M\gamma & 1
\end{pmatrix}\in\Gamma_0(M)$. \par
Thus $W_p$ maps $\mathcal{S}_{k}(\Gamma_0(M),\chi)$ to $V(p)\mathcal{S}_{k}(\Gamma_0(M),\chi)$ with inverse $\bar{\chi}^{(M)}(p)W_p$ which shows that this is indeed an isomorphism.
\end{proof}
\begin{cor} \label{corol}
    Lemma \ref{lem1} implies that $V(p)\mathcal{S}_{k}(\Gamma_0(M),\chi)$ is contained in the $p$-eigenspace of $Q_p'$.
\end{cor}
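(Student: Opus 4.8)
The plan is to read the corollary off the conjugation relation $Q_p'=W_pQ_pW_p^{-1}$ together with the two ingredients already in hand: Lemma \ref{lem1}, which identifies $V(p)\mathcal{S}_{k}(\Gamma_0(M),\chi)$ with the $W_p$-image of $\mathcal{S}_{k}(\Gamma_0(M),\chi)$, and Lemma \ref{lem2}, which computes $Q_p$ on $\mathcal{S}_{k}(\Gamma_0(M),\chi)$. Conjugation by $W_p$ should then transport the eigenvalue statement for $Q_p$ on $\mathcal{S}_{k}(\Gamma_0(M),\chi)$ into the desired eigenvalue statement for $Q_p'$ on $V(p)\mathcal{S}_{k}(\Gamma_0(M),\chi)$.

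Concretely I would argue as follows. Fix $g\in V(p)\mathcal{S}_{k}(\Gamma_0(M),\chi)$, so $g(z)=f(pz)$ for a unique $f\in\mathcal{S}_{k}(\Gamma_0(M),\chi)$. By the first assertion of Lemma \ref{lem1}, $W_p(f)=p^{k/2}\chi^{(M)}(p)\,g$, equivalently $g=p^{-k/2}\bar{\chi}^{(M)}(p)\,W_p(f)$. Next, from $Q_p'=W_pQ_pW_p^{-1}$ one has the operator identity $Q_p'W_p=W_pQ_p$ on $\mathcal{S}_{k}(\Gamma_0(N),\chi)$ (here one uses that $W_p$ is invertible, which holds under the standing hypothesis $\chi^{(p)}\equiv 1$ of this section — an explicit left inverse $\bar{\chi}^{(M)}(p)W_p$ on the old subspace is already recorded in Lemma \ref{lem1}). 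Finally apply Lemma \ref{lem2}, $Q_p(f)=pf$. Combining,
\[
Q_p'(g)=p^{-k/2}\bar{\chi}^{(M)}(p)\,Q_p'W_p(f)=p^{-k/2}\bar{\chi}^{(M)}(p)\,W_pQ_p(f)=p^{-k/2}\bar{\chi}^{(M)}(p)\,W_p(pf)=p\,g ,
\]
which is exactly the assertion. As an independent cross-check one can instead use $Q_p'=\bar{\chi}^{(M)}(p)W_p\tilde U_p$ and compute $\tilde U_p$ on $g(z)=f(pz)$ directly from the definition of $U_p$, getting $\tilde U_p(g)(z)=p^{-k/2}\sum_{s=0}^{p-1}f(z+s)=p^{1-k/2}f(z)$, and then apply $W_p$ via Lemma \ref{lem1}; this reproduces $Q_p'(g)=p\,g$.

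The only point needing real care is the legitimacy of the manipulations involving $W_p^{-1}$: one must make sure $W_p$ is genuinely invertible on the space under consideration and that the conjugation defining $Q_p'$ interacts correctly with $V(p)\mathcal{S}_{k}(\Gamma_0(M),\chi)$. This is precisely where the hypothesis $\chi^{(p)}\equiv 1$ enters and where Lemma \ref{lem1} does the substantive work, by producing an explicit two-sided inverse of $W_p$ on the old subspace; everything after that is bookkeeping. It is also worth noting, as a consistency remark, that the eigenvalue $p$ is compatible with the quadratic relation $(Q_p'+1)(Q_p'-p)=0$: the forms $f(pz)$ contribute to the $p$-eigenspace of $Q_p'$, while Lemma \ref{Qp'lem} places the newforms in the $-1$-eigenspace, and this separation is the mechanism driving the square-free case of the main theorem.
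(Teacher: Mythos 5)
Your proof is correct and follows the route the paper intends: the corollary is read off from $Q_p'W_p=W_pQ_p$ together with Lemma \ref{lem1} (which exhibits $W_p$ as an isomorphism $\mathcal{S}_{k}(\Gamma_0(M),\chi)\to V(p)\mathcal{S}_{k}(\Gamma_0(M),\chi)$ with explicit inverse $\bar{\chi}^{(M)}(p)W_p$) and Lemma \ref{lem2} ($Q_p=p$ on $\mathcal{S}_{k}(\Gamma_0(M),\chi)$). Your direct cross-check via $\tilde U_p(f(p\cdot))=p^{1-k/2}f$ is also consistent, and you correctly note that Lemma \ref{lem2} is needed in addition to Lemma \ref{lem1} even though the corollary's wording cites only the latter.
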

Next we show that the operators we constructed above are Hermitian with respect to the Petersson inner product $\langle.,.\rangle$. Further, let us denote the p-adic inner product as $\langle.,.\rangle_p$.
\begin{prop} \label{herm}
    The Hecke operator $Q_p$ is Hermitian and thus $Q_p'$ is Hermitian. 
\end{prop}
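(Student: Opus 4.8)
We pass to the $p$-adic picture, where the symmetry becomes transparent. Since we are in the case $\chi^{(p)}\equiv 1$, the Hecke algebra at $p$ relevant to $Q_p$ is the full Iwahori Hecke algebra $\mathcal{H}(G/\!\!/K_0(p))$, and by construction $Q_p=q(\mathcal{U}_0)$ with $\mathcal{U}_0=X_{w(1)}$; the same identifications give $W_p=q(\mathcal{U}_1)$ with $\mathcal{U}_1=X_{w(p)}$. Two ingredients are needed. First, a Hecke operator acts on $A_k(N,\chi_{id})$ by right convolution, so a routine unfolding plus unimodularity of $G$ shows that the adjoint of (the convolution operator of) $\Phi$ with respect to the $p$-adic inner product $\langle\cdot,\cdot\rangle_p$ is $\Phi^\vee$, where $\Phi^\vee(g)=\overline{\Phi(g^{-1})}$; in particular, for $\Phi=X_g$ a real-valued characteristic function, $\Phi^\vee=X_{g^{-1}}$. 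Second, the comparison isomorphism $q:A_k(N,\chi_{id})\to\mathcal{S}_k(\Gamma_0(N),\chi)$ is an isometry up to a positive real constant --- one compares the adelic and classical integral expressions for the norm via strong approximation, the idelic character factors $\chi_{id}(k_0)$ contributing only a factor of absolute value one which cancels --- so conjugation by $q$ sends $\langle\cdot,\cdot\rangle_p$-adjoints to Petersson adjoints. Hence the Petersson adjoint of $Q_p=q(\mathcal{U}_0)$ is $q(\mathcal{U}_0^\vee)$, and that of $W_p=q(\mathcal{U}_1)$ is $q(\mathcal{U}_1^\vee)$.

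The heart of the matter is that $\mathcal{U}_0$ is self-adjoint. Indeed $w(1)^{-1}=\begin{pmatrix}0&1\\-1&0\end{pmatrix}=(-I)\,w(1)$ and $-I\in K_0(p)$, so $K_0(p)w(1)^{-1}K_0(p)=K_0(p)w(1)K_0(p)$ and therefore $\mathcal{U}_0^\vee=\mathcal{U}_0$. Consequently $Q_p^{*}=q(\mathcal{U}_0^\vee)=q(\mathcal{U}_0)=Q_p$, i.e. $Q_p$ is Hermitian.

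For $Q_p'$ it now suffices to see that $W_p$ is unitary. By Theorem \ref{struct} we have $\mathcal{U}_1^2=\mathcal{Z}$, and the de-adelization of the central element yields $q(\mathcal{Z})=\chi^{(M)}(p)\,\mathrm{id}$; thus $W_p^2=\chi^{(M)}(p)\,\mathrm{id}$ and $W_p^{-1}=\bar\chi^{(M)}(p)\,W_p$. On the other hand $w(p)^{-1}$ lies, up to the factor $-I\in K_0(p)$, in $z(p)^{-1}K_0(p)w(p)K_0(p)$, so $\mathcal{U}_1^\vee=\mathcal{Z}^{-1}*\mathcal{U}_1$ and hence $W_p^{*}=q(\mathcal{Z}^{-1})\,W_p=\bar\chi^{(M)}(p)\,W_p=W_p^{-1}$. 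Combining $W_p^{*}=W_p^{-1}$ with $Q_p^{*}=Q_p$ gives
\[
(Q_p')^{*}=(W_pQ_pW_p^{-1})^{*}=(W_p^{-1})^{*}\,Q_p^{*}\,W_p^{*}=(W_p^{*})^{-1}\,Q_p\,W_p^{*}=W_p\,Q_p\,W_p^{-1}=Q_p',
\]
so $Q_p'$ is Hermitian.

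I expect the single real obstacle to be the second ingredient: making precise the $p$-adic inner product $\langle\cdot,\cdot\rangle_p$ and checking that de-adelization intertwines it with the Petersson product --- equivalently, that $q$ is an isometry up to a positive scalar. Once that is in place, everything else reduces to the two elementary double-coset identities $K_0(p)w(1)^{-1}K_0(p)=K_0(p)w(1)K_0(p)$ and $\mathcal{U}_1^\vee=\mathcal{Z}^{-1}*\mathcal{U}_1$. One can also argue purely classically via the Atkin--Lehner--Li formulas for the Petersson adjoints of $U_p$ and $W_p$ with $p\parallel N$, $\chi^{(p)}\equiv1$; there the delicate point is tracking the character values, and it is exactly the cancellation $\bar\chi^{(M)}(p)^2\,\chi^{(M)}(p)=\bar\chi^{(M)}(p)$ together with $W_p^2=\chi^{(M)}(p)$ that makes $Q_p=\bar\chi^{(M)}(p)\tilde U_pW_p$ self-adjoint.
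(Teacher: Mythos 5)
Your proof is correct and lives in the same framework as the paper's: both pass to the adelic picture, use Gelbart's identity $\langle\Phi_f,\Phi_g\rangle_p=\langle f,g\rangle$ to transport Petersson adjoints to $p$-adic ones, and then compute adjoints inside the Iwahori Hecke algebra at $p$ (untwisted there, since $\chi^{(p)}\equiv 1$). The difference is in how self-adjointness of $\mathcal{U}_0$ is obtained. The paper writes $\mathcal{U}_0=\mathcal{Z}^{-1}*\mathcal{T}_1*\mathcal{U}_1$ and chains through the adjoints of $\mathcal{T}_1$ and $\mathcal{U}_1$ using the relations $\mathcal{U}_1^2=\mathcal{Z}$ and $\mathcal{T}_{-1}=\mathcal{Z}^{-1}\mathcal{U}_1\mathcal{T}_1\mathcal{U}_1^{-1}$; you short-circuit all of this with the single observation $w(1)^{-1}=(-I)\,w(1)$ with $-I\in K_0(p)$, so that $\mathcal{U}_0^{\vee}=\mathcal{U}_0$ on the nose. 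Your route is shorter and less error-prone, and both checks ($w(1)^{-1}\in K_0(p)w(1)K_0(p)$ and $w(p)^{-1}\in z(p)^{-1}K_0(p)w(p)K_0(p)$) are verified correctly. You also make explicit the unitarity $W_p^{*}=W_p^{-1}$ via $\mathcal{U}_1^{\vee}=\mathcal{Z}^{-1}*\mathcal{U}_1$ and $q(\mathcal{Z})=\chi^{(M)}(p)$, which the paper uses silently in the step $\langle W_pQ_pW_p^{-1}f,g\rangle=\langle Q_pW_p^{-1}f,W_p^{-1}g\rangle$; making that explicit is an improvement. The one ingredient you flag as delicate --- that de-adelization intertwines $\langle\cdot,\cdot\rangle_p$ with the Petersson product --- is exactly what the paper settles by citing Gelbart, so nothing essential is missing from your argument.
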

\begin{proof}
By Gelbart (\cite[p.88]{Gel}) $\langle\Phi_f,\Phi_g\rangle_p=\langle f,g\rangle$ thus $$\langle\mathcal{U}_0\Phi_f,\Phi_g\rangle_p=\langle\Phi_{Q_pf},\Phi_g\rangle_p=\langle Q_pf,g\rangle.$$ This implies that: $$\langle Q_pf,g\rangle=\langle\mathcal{U}_0\Phi_f,\Phi_g\rangle_p=\langle\mathcal{Z}^{-1}\mathcal{T}_1\mathcal{U}_1\Phi_f,\Phi_g\rangle_p.$$ And since  $\langle\mathcal{T}_1\mathcal{U}_1\Phi_f,\Phi_g\rangle_p=\langle \mathcal{U}_1\Phi_f,\mathcal{T}_{-1}\Phi_g\rangle_p= \langle \mathcal{U}_1\Phi_f,\mathcal{Z}^{-1}\mathcal{U}_1\mathcal{T}_1\mathcal{U}_1^{-1}\Phi_g\rangle_p=\\ \langle \Phi_f,\mathcal{Z}^{-1}\mathcal{T}_1\mathcal{U}_1^{-1}\Phi_g\rangle_p\overset{\mathcal{U}_1^2\mathcal{Z}^{-1}=I}{=} \langle \Phi_f,\mathcal{Z}^{-1}\mathcal{T}_1\mathcal{U}_1^{-1}\mathcal{U}_1^2\mathcal{Z}^{-1}\Phi_g\rangle_p=\langle \Phi_f,\mathcal{Z}^{-2}\mathcal{T}_1\mathcal{U}_1\Phi_g\rangle_p$, we obtain the relation: $$\langle Q_pf,g\rangle=\langle \mathcal{Z}^{-1}\Phi_f,\mathcal{Z}^{-2}\mathcal{T}_1\mathcal{U}_1\Phi_g\rangle_p=\langle \Phi_f,\mathcal{U}_0\Phi_g\rangle_p=\langle f,Q_pg\rangle$$

\par
For $Q_p'$ then: $\langle W_pQ_pW_p^{-1}f,g\rangle=\langle Q_pW_p^{-1}f,W_p^{-1}g\rangle=\langle W_p^{-1}f,Q_pW_p^{-1}g\rangle=\langle f,W_pQ_pW_p^{-1}g\rangle$.
\end{proof}
We are now able to prove the main theorem for square-free $N$.
\begin{theorem} \label{sqfr}
    Consider $\mathcal{S}_{k}(\Gamma_0(N),\chi)$ where $N=\prod\limits_{i=1}^mp_i$ and write $\chi=\prod\limits_{i=1}^m\chi^{(p_i)}$. Then $\mathcal{S}_{k}^{new}(\Gamma_0(N),\chi)$ is the intersection of the $-1$ eigenspaces of the Hecke operators $Q_{p_i},Q_{p_i}'$ for every $i$ with $\chi^{(p_i)}\equiv 1$
\end{theorem}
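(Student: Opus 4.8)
The plan is to reduce the statement to the single-prime results already established (Lemmas \ref{Q_plem}, \ref{lem2}, \ref{Qp'lem} and Corollary \ref{corol}) by exploiting the fact that the operators $Q_{p_i}, Q'_{p_i}$ attached to distinct primes commute and are simultaneously diagonalizable. First I would fix one prime $p=p_i$ with $\chi^{(p)}\equiv 1$ and set $M=N/p$. The decomposition of the space of oldforms recorded above gives
\[
\mathcal{S}_{k}(\Gamma_0(N),\chi)=\mathcal{S}_{k}^{new}(\Gamma_0(N),\chi)\ \oplus\!\!\sum_{\substack{q\mid N\\ \chi^{(q)}\equiv 1}}\!\!\Big(\mathcal{S}_{k}(\Gamma_0(N/q),\chi^{(N/q)})\oplus V(q)\mathcal{S}_{k}(\Gamma_0(N/q),\chi^{(N/q)})\Big),
\]
and the main work is to show that on every old piece at least one of the $Q_{p_i}$ or $Q'_{p_i}$ acts without a $-1$ eigenvalue, while on the new space every $Q_{p_i}$ and $Q'_{p_i}$ acts as $-1$.

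The ``new implies $-1$'' direction is immediate: Lemma \ref{Q_plem} gives $Q_{p_i}(f)=-f$ and Lemma \ref{Qp'lem} gives $Q'_{p_i}(f)=-f$ for all $f\in\mathcal{S}_{k}^{new}(\Gamma_0(N),\chi)$ and all admissible $i$, so $\mathcal{S}_{k}^{new}$ is contained in the intersection of the $-1$ eigenspaces. For the reverse inclusion I would argue by induction on the number of prime factors of $N$. Since $Q_p$ satisfies $(Q_p+1)(Q_p-p)=0$ (and likewise $Q'_p$), the whole space splits as the direct sum of the $-1$ and $p$ eigenspaces of $Q_p$; because $Q_p$ commutes with the Hecke operators away from $p$ and with the $Q_{q}, Q'_{q}$ for $q\neq p$ (they act on different tensor factors of the adelic picture), this splitting is compatible with everything else. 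Lemma \ref{lem2} shows $\mathcal{S}_{k}(\Gamma_0(M),\chi)$ lies in the $p$-eigenspace of $Q_p$, and Corollary \ref{corol} shows $V(p)\mathcal{S}_{k}(\Gamma_0(M),\chi)$ lies in the $p$-eigenspace of $Q'_p$. Hence any $f$ lying in the $-1$ eigenspace of \emph{both} $Q_p$ and $Q'_p$ has zero component in the two oldform pieces coming from level $M=N/p$. Running this over all $i$ with $\chi^{(p_i)}\equiv 1$ kills every summand in the oldform decomposition above (for primes $q$ with $\chi^{(q)}\not\equiv 1$ there is no corresponding old piece, since $\mathcal{S}_k(\Gamma_0(N/q),\chi)$ is then trivial), leaving only $\mathcal{S}_{k}^{new}(\Gamma_0(N),\chi)$.

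To make the last step rigorous I would invoke Proposition \ref{herm}: since $Q_{p_i}$ and $Q'_{p_i}$ are Hermitian for the Petersson product and commute with each other across different primes, the ambient space has an orthogonal basis of simultaneous eigenforms, and the common $-1$ eigenspace $\bigcap_i\ker(Q_{p_i}+1)\cap\ker(Q'_{p_i}+1)$ is exactly the orthogonal complement of the span of the $p_i$-eigenspaces; by the previous paragraph that span contains all of $\mathcal{S}_k^{old}$, and by dimension count (or by self-adjointness, since $\mathcal{S}_k^{new}$ is the orthogonal complement of $\mathcal{S}_k^{old}$) the two coincide. The main obstacle I anticipate is bookkeeping the commutativity and the compatibility of the eigenspace decompositions across the different primes $p_i$ — i.e. checking that restricting to the $-1$ eigenspace of $Q_{p_1}$ does not disturb the action of $Q_{p_2}, Q'_{p_2}$ and that one can peel off the primes one at a time — together with confirming that $W_p$ preserves the relevant subspaces so that $Q'_p=W_pQ_pW_p^{-1}$ is genuinely defined on all the pieces in play. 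Everything else is an assembly of the single-prime lemmas already proved.
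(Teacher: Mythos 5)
Your proposal is correct and is essentially the paper's own argument: the new-to-$(-1)$ direction is Lemmas \ref{Q_plem} and \ref{Qp'lem}, and the converse follows because the Hermitian property (Proposition \ref{herm}) forces the $-1$ eigenspace of $Q_p$ (resp.\ $Q_p'$) to be orthogonal to its $p$-eigenspace, which contains $\mathcal{S}_k(\Gamma_0(N/p),\chi)$ (resp.\ $V(p)\mathcal{S}_k(\Gamma_0(N/p),\chi)$) by Lemma \ref{lem2} and Corollary \ref{corol}. The commutativity, simultaneous diagonalization, and induction you worry about are not needed: the orthogonality argument is applied to each prime $p_i$ separately and already shows $f$ is orthogonal to every summand of the oldspace.
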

\begin{proof}
    If $f\in \mathcal{S}_{k}^{new}(\Gamma_0(N),\chi)$ then Lemma
    \ref{Q_plem} and Lemma \ref{Qp'lem} state that $Q_p(f)=Q_p'(f)=-f$.
    Let $f\in \mathcal{S}_{k}(\Gamma_0(N),\chi)$ be such that $Q_p(f)=Q_p'(f)=-f$  for every prime $p|N$. Since $Q_p,Q_p'$ are pseudo-Hermitian by Proposition \ref{herm}, $f$ is orthogonal to their resepctive $p$-eigenspaces. The $p-$eigenspace of $Q_p$ however contains $\mathcal{S}_{k}(\Gamma_0(M),\chi)$ by Lemma \ref{lem2} and similarly by Corollary \ref{corol} we have that $V(p)\mathcal{S}_{k}(\Gamma_0(M),\chi)$ is in the $p-$eigenspace of $Q_p'$. Thus $f$ is orthogonal to $\mathcal{S}_{k}(\Gamma_0(M),\chi)\oplus V(p)\mathcal{S}_{k}(\Gamma_0(M),\chi)$ for every $p|N$, completing the proof.
\end{proof}

\section{General Level Forms} \label{GenLev}
Let $N=p^nM: (M,p)=1$ and $\chi=\prod_{i=1}^m\chi^{({p_i}^{a_i})}=\chi^{(p^n)}\chi^{(M)}$. Consider the classical Hecke operator $S_{p^n,r}(f)=f+\sum_{j=r}^{n-1}\sum_{s\in{\mathbb{Z}_p}^*/1+p^{n-j}\mathbb{Z}_p}\bar{\chi}(A_{s,j})f|A_{s,j}$ where $A_{s,j}=\begin{pmatrix}
    a_{s,j} & b_{s,j} \\
    p^jM & p^{n-j}-sM
\end{pmatrix}$ with $a_{s,j}, b_{s,j}$ chosen such that $det(A_{s,j})=1$. From \cite[Lemma 5.8]{BarSom} then we use the following lemma:
\begin{lemma}
    For $1\leq r\leq n$ a set of right coset representatives of $\Gamma_0(N)$ in $\Gamma_0(p^rM)$ consists of $I$ and $A_{s,j}$ with $s\in{\mathbb{Z}_p}^*/1+p^{n-j}\mathbb{Z}_p$ and $r\leq j\leq n-1$.
\end{lemma}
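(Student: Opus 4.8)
The plan is to reduce the lemma to a count of indices together with a covering argument. Since $r\le n$ we have $p^rM\mid p^nM$, so $\Gamma_0(N)=\Gamma_0(p^nM)$ is a subgroup of $\Gamma_0(p^rM)$; because $r\ge 1$ both levels have the same prime divisors, and the standard index formula $[\mathrm{SL}_2(\mathbb{Z}):\Gamma_0(L)]=L\prod_{q\mid L}(1+q^{-1})$ gives $[\Gamma_0(p^rM):\Gamma_0(N)]=p^{n-r}$. On the other hand the proposed list has
\[
1+\sum_{j=r}^{n-1}\#\bigl(\mathbb{Z}_p^{\times}/(1+p^{n-j}\mathbb{Z}_p)\bigr)=1+(p-1)\sum_{j=r}^{n-1}p^{\,n-j-1}=p^{n-r}
\]
elements, so it will suffice to show that each listed matrix lies in $\Gamma_0(p^rM)$ and that every $\gamma\in\Gamma_0(p^rM)$ lies in one of the right cosets $\Gamma_0(N)A_{s,j}$ (or in $\Gamma_0(N)$ itself); distinctness of the cosets is then forced by the matching count, though it can also be read off directly from the $p$-adic valuations of the lower-left entries.

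Membership is immediate: $I\in\Gamma_0(p^rM)$, and $A_{s,j}=\left(\begin{smallmatrix}a_{s,j}&b_{s,j}\\ p^jM&p^{n-j}-sM\end{smallmatrix}\right)$ has determinant $1$ and lower-left entry $p^jM$, which is divisible by $p^rM$ precisely because $j\ge r$. For the covering statement I would analyze a given $\gamma=\left(\begin{smallmatrix}a&b\\ c&d\end{smallmatrix}\right)\in\Gamma_0(p^rM)$ according to $v_p(c)$: since $p^rM\mid c$ and $\gcd(p,M)=1$ one writes $c=p^jMc_0$ with $j=v_p(c)\ge r$ and $p\nmid c_0$. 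If $j\ge n$ then $p^nM\mid c$, so $\gamma\in\Gamma_0(N)$ and $\gamma$ lies in the coset of $I$. If $r\le j\le n-1$, then $ad-bc=1$ and $p\mid c$ force $d\in\mathbb{Z}_p^{\times}$, so one picks an integer $s$, coprime to $p$, with $s\equiv -d(c_0M)^{-1}\pmod{p^{n-j}}$. Using $A_{s,j}^{-1}=\left(\begin{smallmatrix}p^{n-j}-sM&-b_{s,j}\\ -p^jM&a_{s,j}\end{smallmatrix}\right)$ and $c=p^jMc_0$, the lower-left entry of $\gamma A_{s,j}^{-1}$ comes out to $p^jM\bigl(c_0p^{n-j}-c_0sM-d\bigr)$, and the choice of $s$ makes the bracketed factor divisible by $p^{n-j}$, hence this entry divisible by $p^nM=N$; since $\gamma A_{s,j}^{-1}$ also has integer entries and determinant $1$, it lies in $\Gamma_0(N)$, i.e. $\gamma\in\Gamma_0(N)A_{s,j}$.

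The step I expect to be the main obstacle is exactly the bookkeeping above: setting up the factorization $c=p^jMc_0$ from $M\mid c$ and $\gcd(M,p)=1$, and then isolating the single congruence on $s$ that controls membership in $\Gamma_0(N)$. A secondary point to dispatch is well-definedness of the coset $\Gamma_0(N)A_{s,j}$: it must not depend on the auxiliary entries $a_{s,j},b_{s,j}$ (two such matrices with the same bottom row and determinant $1$ differ on the left by an upper-triangular element of $\Gamma_0(N)$), nor on the representative of $s$ modulo $1+p^{n-j}\mathbb{Z}_p$ (a congruent choice changes $A_{s,j}$ by left multiplication by an element of $\Gamma_0(N)$, since the lower-left entry of $A_{s,j}A_{s',j}^{-1}$ is $p^jM^2(s-s')$, which is divisible by $p^nM$ exactly when $p^{n-j}\mid s-s'$). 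Granting these, the result follows; it is the character-free coset decomposition recorded as \cite[Lemma 5.8]{BarSom}, so within the paper it may simply be quoted.
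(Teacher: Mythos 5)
Your proof is correct. Note that the paper itself does not prove this lemma at all: it is imported verbatim from Baruch--Purkait (\cite[Lemma 5.8]{BarSom}), exactly as you anticipate in your closing sentence, so there is no in-paper argument to compare against. What you supply is a complete, self-contained, elementary verification: the index count $[\Gamma_0(p^rM):\Gamma_0(N)]=p^{n-r}$ matches the cardinality $1+(p-1)\sum_{j=r}^{n-1}p^{n-j-1}=p^{n-r}$ of the proposed list, membership of each $A_{s,j}$ in $\Gamma_0(p^rM)$ is immediate from $j\ge r$, and the covering step correctly sorts $\gamma$ by $j=v_p(c)$ and solves the single congruence $s\equiv -d(c_0M)^{-1}\pmod{p^{n-j}}$ (valid since $d,c_0,M$ are all $p$-adic units there) to place $\gamma A_{s,j}^{-1}$ in $\Gamma_0(N)$; distinctness then follows from the count, and your well-definedness checks (independence of $a_{s,j},b_{s,j}$ via the upper-triangular discrepancy, and of the representative of $s$ via the entry $p^jM^2(s-s')$) are exactly the right ones. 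This is essentially the argument one would find in the cited source; the only cosmetic gap is that you do not remark that $\gcd(p^jM,\,p^{n-j}-sM)=1$, which is what guarantees integers $a_{s,j},b_{s,j}$ with $\det(A_{s,j})=1$ exist in the first place, but that is a one-line observation.
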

We will use the lemma below (see \cite[p. 134]{Miy}) to show that assuming $\chi^{({p}^{n})}$ is of conductor at most $p^{c}$ we have that $S_{p^n,r}:\mathcal{S}_{k}(\Gamma_0(N),\chi)\to \mathcal{S}_{k}(\Gamma_0(p^rM),\chi)$. 
\begin{lemma} [\textbf{Miyake} p.134]
Let $\Gamma_1\subseteq\Gamma_2$ and suppose $f\in \mathcal{S}_k(\Gamma_1,\chi)$ where $\chi:\Gamma_2\to\mathbb{C}^{\times}$ is a character of $\Gamma_2$. If $\{A_1,...,A_n\}$ is a set of coset representatives of $\Gamma_1$ in $\Gamma_2$ then the operator $S(f)(z)=\sum_{i=1}^n\bar{\chi}(A_i)f|A_i(z)$ is an operator $S:\mathcal{S}_{k}(\Gamma_1,\chi)\to \mathcal{S}_{k}(\Gamma_2,\chi)$.
\end{lemma}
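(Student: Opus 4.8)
The plan is to verify directly that $S(f)$ satisfies the three defining properties of a member of $\mathcal{S}_k(\Gamma_2,\chi)$: holomorphy on the upper half-plane $\mathbb{H}$, the weight-$k$ transformation law $S(f)|\gamma=\chi(\gamma)S(f)$ for every $\gamma\in\Gamma_2$, and vanishing at every cusp of $\Gamma_2$. Holomorphy is immediate, since $S(f)$ is a finite $\mathbb{C}$-linear combination of the functions $f|A_i$, each holomorphic on $\mathbb{H}$ because $f$ is and every $A_i$ has positive determinant. Linearity of $S$ in $f$ is clear from the definition, so once the target is shown to be $\mathcal{S}_k(\Gamma_2,\chi)$ we are done.

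The heart of the argument is the modularity. Fix $\gamma\in\Gamma_2$. Since $\Gamma_2=\bigsqcup_{i=1}^n\Gamma_1 A_i$ and right translation by $\gamma$ is a bijection of $\Gamma_2$ that permutes these right cosets, there exist a permutation $\sigma$ of $\{1,\dots,n\}$ and elements $\delta_i\in\Gamma_1$ with $A_i\gamma=\delta_i A_{\sigma(i)}$. Using the cocycle identity $(f|A)|B=f|(AB)$ for the $\det$-normalized slash operator (valid on $GL_2(\mathbb{R})^+$, with scalar matrices acting trivially) together with $f|\delta_i=\chi(\delta_i)f$, which holds because $f\in\mathcal{S}_k(\Gamma_1,\chi)$, one computes
\[
S(f)|\gamma=\sum_{i=1}^n\bar\chi(A_i)\,f|(A_i\gamma)=\sum_{i=1}^n\bar\chi(A_i)\chi(\delta_i)\,f|A_{\sigma(i)}.
\]
Now, because $\chi$ is a genuine character of $\Gamma_2$ and $|\chi|\equiv 1$, the relation $\delta_i=A_i\gamma A_{\sigma(i)}^{-1}$ gives $\bar\chi(A_i)\chi(\delta_i)=\bar\chi(A_i)\chi(A_i)\chi(\gamma)\bar\chi(A_{\sigma(i)})=\chi(\gamma)\bar\chi(A_{\sigma(i)})$. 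Re-indexing the sum by $\sigma$ yields $S(f)|\gamma=\chi(\gamma)S(f)$.

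For cuspidality, recall that $f\in\mathcal{S}_k(\Gamma_1,\chi)$ implies that for every $\alpha\in SL_2(\mathbb{Z})$ the function $f|\alpha$ is holomorphic at $\infty$ with vanishing constant term in its $q$-expansion; the same then holds for any finite $\mathbb{C}$-linear combination of such functions. Applying this with $\alpha=A_i\beta$ for arbitrary $\beta\in SL_2(\mathbb{Z})$ shows that $S(f)|\beta$ has this property for all $\beta$, i.e. $S(f)$ vanishes at every cusp of $\Gamma_2$. Combined with the first two points, $S(f)\in\mathcal{S}_k(\Gamma_2,\chi)$.

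The argument is essentially formal, so there is no serious analytic obstacle; the only points requiring care are combinatorial. First, one must check that $\{A_i\gamma\}_{i}$ is again a full set of right coset representatives for $\Gamma_1$ in $\Gamma_2$, so that $\sigma$ is well defined and $\delta_i\in\Gamma_1$. Second, and more to the point, the identity $\bar\chi(A_i)\chi(\delta_i)=\chi(\gamma)\bar\chi(A_{\sigma(i)})$ uses both the multiplicativity of $\chi$ on all of $\Gamma_2$ and $|\chi|=1$; this is precisely where the hypothesis that $\chi$ extends to a character of $\Gamma_2$ enters, and in the intended application it is what forces $\chi^{(p^n)}$ to have conductor dividing $p^r$ in order that $\chi$ be a character of $\Gamma_0(p^rM)$. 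Everything else is bookkeeping around the cocycle property of the slash action.
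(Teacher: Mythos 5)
Your proof is correct and its core modularity argument is essentially identical to the paper's: both write $A_i\gamma=\delta_iA_{\sigma(i)}$ with $\delta_i\in\Gamma_1$, use $\bar\chi(A_i)\chi(A_i\gamma A_{\sigma(i)}^{-1})=\chi(\gamma)\bar\chi(A_{\sigma(i)})$, and re-index over the permutation $\sigma$. Your explicit verification of holomorphy and cuspidality is additional bookkeeping the paper leaves implicit, and your final transformation law $S(f)|\gamma=\chi(\gamma)S(f)$ is the consistent reading of the paper's computation (whose last line contains a stray bar over $\chi(\gamma')$).
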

\begin{proof}
    For every $\gamma'\in\Gamma_2$ and $i\in\{1,...,n\}$, there exists a $\gamma_i\in\Gamma_1:A_i\gamma'=\gamma_iA_j$ and distinct $i$ give rise to distinct $j$. Then $S(f)|\gamma'=\sum\limits_{i=1}^n\bar{\chi}(A_i)f|A_i\gamma'=\sum\limits_{i=1}^n\bar{\chi}(A_i)f|\gamma_iA_j=\sum\limits_{i=1}^n\bar{\chi}(A_i)\chi(\gamma_i)f|A_j=\sum\limits_{i=1}^n\bar{\chi}(A_i)\chi(A_i\gamma'A_j^{-1})f|A_j\overset{(**)}{=}\\ \sum\limits_{j=1}^n\bar{\chi}(A_j)\chi(\gamma')f|A_j=\bar{\chi}(\gamma')S(f)$. Equality $(**)$ follows because each $j$ appears exactly once for each $i$ and $\chi(A_i\gamma'A_j^{-1})=\chi(A_i)\chi(\gamma')\bar{\chi}(A_j)$.
\end{proof}
\begin{cor} \label{cor2}
    If $\chi^{({p}^{n})}$ is of conductor $p^c$ and $r\geq c$, then $S_{p^n,r}:\mathcal{S}_{k}(\Gamma_0(N),\chi)\to \mathcal{S}_{k}(\Gamma_0(p^rM),\chi)$.
\end{cor}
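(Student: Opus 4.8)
The plan is to deduce Corollary~\ref{cor2} by a direct application of Miyake's Lemma (the statement from \cite[p.~134]{Miy} quoted just above), taking $\Gamma_1=\Gamma_0(N)=\Gamma_0(p^nM)$ and $\Gamma_2=\Gamma_0(p^rM)$. Since $r\le n$ we have $p^rM\mid p^nM=N$, so $\Gamma_0(N)\subseteq\Gamma_0(p^rM)$ and the pair $(\Gamma_1,\Gamma_2)$ has the shape required by that lemma. By \cite[Lemma 5.8]{BarSom}, a complete set of right coset representatives of $\Gamma_0(N)$ in $\Gamma_0(p^rM)$ is $\{I\}$ together with the matrices $A_{s,j}$ for $r\le j\le n-1$ and $s\in\mathbb{Z}_p^{\times}/(1+p^{n-j}\mathbb{Z}_p)$. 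For this choice of representatives the operator built in Miyake's Lemma is exactly $\bar\chi(I)f+\sum_{j=r}^{n-1}\sum_{s}\bar\chi(A_{s,j})f|A_{s,j}=S_{p^n,r}(f)$, so it suffices to verify that the hypotheses of Miyake's Lemma hold for these data.

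The only hypothesis that is not immediate --- and the sole place where $r\ge c$ is used --- is that $\chi$, which a priori is a character only of $\Gamma_0(N)=\Gamma_0(p^nM)$, actually descends to a character of the larger group $\Gamma_2=\Gamma_0(p^rM)$ via $\gamma\mapsto\chi(d)$. Writing $\chi=\chi^{(p^n)}\chi^{(M)}$, the factor $\chi^{(M)}$ depends only on the residue of $d$ modulo $M$, while $\chi^{(p^n)}$, having conductor $p^c$ with $c\le r$, depends only on the residue of $d$ modulo $p^c$, hence modulo $p^r$; by coprimality of $p^r$ and $M$ the map $d\mapsto\chi(d)$ therefore factors through $(\mathbb{Z}/p^rM\mathbb{Z})^{\times}$. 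For $\gamma_1,\gamma_2\in\Gamma_0(p^rM)$ with lower-right entries $d_1,d_2$, the lower-left entry of $\gamma_1$ is divisible by $p^rM$, so the lower-right entry of $\gamma_1\gamma_2$ is congruent to $d_1d_2$ modulo $p^rM$; hence $\gamma\mapsto\chi(d)$ is multiplicative on $\Gamma_0(p^rM)$ and defines a character extending $\chi$. One also checks that each $A_{s,j}$ indeed lies in $\Gamma_0(p^rM)$, since $p^rM\mid p^jM$ for $j\ge r$, so that $\bar\chi(A_{s,j})$ appearing in $S_{p^n,r}$ equals $\bar\chi(p^{n-j}-sM)$, the value of this extended character; this value and the associated summand $\bar\chi(A_{s,j})f|A_{s,j}$ do not depend on the auxiliary choice of $a_{s,j},b_{s,j}$, since two admissible choices differ by left multiplication by an integral upper-triangular unipotent matrix, which lies in $\Gamma_0(N)$, has lower-right entry $1$, and acts trivially under both $\chi$ and the slash.

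With $\chi$ recognized as a genuine character of $\Gamma_2$, Miyake's Lemma applies verbatim and yields $S_{p^n,r}=S:\mathcal{S}_{k}(\Gamma_0(N),\chi)\to\mathcal{S}_{k}(\Gamma_0(p^rM),\chi)$, which is the assertion; holomorphy and cuspidality of $S_{p^n,r}(f)$ are automatic, being preserved by finite $\mathbb{C}$-linear combinations of slashes by matrices in $SL_2(\mathbb{Z})$. I expect the only real content to be the character-extension step of the second paragraph --- that the conductor bound $r\ge c$ is precisely what makes $\gamma\mapsto\chi(d)$ well defined on $\Gamma_0(p^rM)$ --- while the rest is bookkeeping: matching the coset representatives of \cite[Lemma 5.8]{BarSom} with the defining sum of $S_{p^n,r}$ and quoting Miyake's Lemma.
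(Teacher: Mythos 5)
Your proof is correct and follows exactly the route the paper intends: apply Miyake's Lemma with $\Gamma_1=\Gamma_0(N)$, $\Gamma_2=\Gamma_0(p^rM)$ and the coset representatives of \cite[Lemma 5.8]{BarSom}, the paper treating the corollary as immediate from these two statements. Your second paragraph merely makes explicit the point the paper leaves implicit --- that the conductor bound $r\geq c$ is what lets $\chi=\chi^{(p^n)}\chi^{(M)}$ descend to a character of $\Gamma_0(p^rM)$ --- which is a welcome but not divergent addition.
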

Below we will keep this assumption on the conductor of $\chi$ and every operator we define is obviously for $r\geq c$ unless otherwise stated. Corollary \ref{cor2} immediately leads to the following result:
\begin{prop} \label{iszer}
    Let $f\in \mathcal{S}_{k}^{new}(\Gamma_0(N),\chi)$, then $S_{p^n,r}(f)=0$ where $r\geq c$.
\end{prop}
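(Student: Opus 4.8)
The plan is to show that $S_{p^n,r}$ restricted to the newspace is the zero map by combining two facts: first, that $S_{p^n,r}$ is essentially the classical trace map from level $N = p^n M$ down to level $p^r M$ (up to the twist by $\bar\chi$), which lands in $\mathcal{S}_k(\Gamma_0(p^r M),\chi)$ by Corollary \ref{cor2}; second, that the image of a newform under this trace map, viewed inside the ambient space $\mathcal{S}_k(\Gamma_0(N),\chi)$, is automatically an oldform, while a newform is by definition orthogonal to the oldspace. Concretely, I would argue as follows. Let $f\in\mathcal{S}_k^{new}(\Gamma_0(N),\chi)$, and set $h = S_{p^n,r}(f)\in\mathcal{S}_k(\Gamma_0(p^r M),\chi)$. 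Since $r < n$, the space $\mathcal{S}_k(\Gamma_0(p^r M),\chi)$ degeneracy-embeds into $\mathcal{S}_k(\Gamma_0(N),\chi)$ (via the identity inclusion $\Gamma_0(N)\subseteq\Gamma_0(p^r M)$), and every form in the image of such an inclusion lies in $\mathcal{S}_k^{old}(\Gamma_0(N),\chi)$ by the very definition of the oldspace. Hence $h\in\mathcal{S}_k^{old}(\Gamma_0(N),\chi)$.

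To conclude $h=0$, I would use adjointness with respect to the Petersson inner product. The key identity is that the twisted trace operator $S_{p^n,r}$, when composed with the inclusion back into level $N$, is (a scalar multiple of) the adjoint of that inclusion; equivalently, for any $g\in\mathcal{S}_k(\Gamma_0(N),\chi)$ supported at level $p^r M$ (i.e. $g$ already of level $p^r M$), one has $\langle S_{p^n,r}(f), g\rangle_{p^r M} = c\,\langle f, g\rangle_{N}$ for a nonzero constant $c$ depending only on the index $[\Gamma_0(p^r M):\Gamma_0(N)]$ and the normalization of the slash action. Since $f$ is a newform it is orthogonal to all of $\mathcal{S}_k^{old}(\Gamma_0(N),\chi)$, and in particular to every $g$ coming from level $p^r M$; therefore $\langle h,g\rangle=0$ for all such $g$. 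But $h$ itself lies in $\mathcal{S}_k(\Gamma_0(p^r M),\chi)$, so taking $g=h$ forces $\|h\|^2=0$, hence $h=0$.

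Alternatively — and this is perhaps cleaner, avoiding the adjointness computation — I would argue purely at the level of eigenforms. Since $\mathcal{S}_k^{new}(\Gamma_0(N),\chi)$ has a basis of simultaneous Hecke eigenforms, it suffices to prove $S_{p^n,r}(f)=0$ for such an $f$. For a newform eigenform $f$ of exact level $N$, multiplicity one at level $p^r M$ together with $h = S_{p^n,r}(f)\in \mathcal{S}_k^{old}(\Gamma_0(N),\chi)$ shows that $h$, if nonzero, would be a nonzero oldform whose Hecke eigenvalues away from $p$ match those of $f$; but strong multiplicity one implies $h$ would then arise from a newform of level dividing $p^r M$ with the same away-from-$p$ eigensystem as $f$, contradicting that $f$ has exact level $N$ (this is where $r<n$ is essential). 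Hence $h=0$. In writing this up I would use the eigenform argument for concreteness but could fall back on the inner product argument if subtleties about oldform multiplicities arise.

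The main obstacle I anticipate is pinning down precisely the constant $c$ in the adjointness relation (or, in the eigenform approach, making the "exact level" contradiction airtight), since the normalizations of the slash operator and of the matrices $A_{s,j}$ must be tracked carefully; but for the statement as given — merely that $S_{p^n,r}(f)=0$ — the nonvanishing of $c$ is all that matters, and that follows from the trace map being a nonzero multiple of the Petersson adjoint of a nonzero degeneracy map. So the argument should go through without difficulty.
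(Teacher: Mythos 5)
Your primary argument is correct but takes a genuinely different route from the paper. The paper works with an eigenbasis of $\mathcal{S}_{k}^{new}(\Gamma_0(N),\chi)$: it uses the adelic identification $S_{p^n,r}=q(\mathcal{Y}_r)$ to see that $S_{p^n,r}$ commutes with $T_q$ for $q\nmid N$, so $S_{p^n,r}(f)$ shares the eigensystem of the newform $f$ while lying in the strictly smaller-level space $\mathcal{S}_{k}(\Gamma_0(p^rM),\chi)$, and then invokes Li's Theorem 2 to conclude it vanishes. Your adjointness argument instead uses only Corollary \ref{cor2} plus the standard unfolding identity $\langle S_{p^n,r}(f),g\rangle_{p^rM}=\langle f,g\rangle_{N}$ for $g$ of level $p^rM$ (which is exact, with $c=1$, for the unnormalized Petersson integrals, and in any case nonzero), together with the definition of the newspace as the orthogonal complement of the oldspace; taking $g=S_{p^n,r}(f)$ forces the vanishing. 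This is more elementary: it avoids both the adelic commutation argument and the appeal to multiplicity-one-type results, and it does not require passing to an eigenbasis. Your fallback eigenform argument is essentially the paper's proof, except that you leave unjustified the key step that the away-from-$N$ eigenvalues of $S_{p^n,r}(f)$ agree with those of $f$ --- that is exactly the commutation of the trace with $T_q$, which the paper establishes on the $p$-adic side; if you use that route you should prove the commutation (it also follows classically from a coset computation). Stick with the adjointness proof as written and the argument is complete.
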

\begin{proof}
    It suffices to prove the result for every form $f$ in an eigenbasis of the space $\mathcal{S}_{k}^{new}(\Gamma_0(N),\chi)$.
    Since $S_{p^n,r}=q(\mathcal{Y}_r)$ and $\mathcal{Y}_r$ commutes with $\mathcal{T}_q$ for every prime $q\nmid N$ we have that $S_{p^n,r}$ also commutes with $T_q$ classicaly. Then $T_q(S_{p^n,r}(f))=S_{p^n,r}(T_q(f))\implies T_q(S_{p^n,r}(f))=a(q)S_{p^n,r}(f)$ and thus $S_{p^n,r}(f)\sim f$. But by Corollary \ref{cor2}: $S_{p^n,r}(f)\in \mathcal{S}_{k}(\Gamma_0(p^rM))$ and is equivalent to a newform. By \cite[Theorem 2]{Li} it follows that $S_{p^n,r}(f)=0$.
\end{proof}
We now investigate the action of $S_{p^n,r}$ on the lower-level forms.  
\begin{lemma}
    Let $f\in \mathcal{S}_{k}(\Gamma_0(p^rM),\chi)$, then $S_{p^n,r}(f)=p^{n-r}f$. Equivalently: $\mathcal{S}_{k}(\Gamma_0(p^rM),\chi)$ lies in the $p^{n-r}$ eigenspace of $S_{p^n,r}$. 
\end{lemma}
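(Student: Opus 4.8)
The plan is to observe that every matrix $A_{s,j}$ occurring in the definition of $S_{p^n,r}$ already lies in $\Gamma_0(p^rM)$, so that each summand $f\mapsto\bar\chi(A_{s,j})f|A_{s,j}$ acts on $\mathcal{S}_k(\Gamma_0(p^rM),\chi)$ as the identity, and then simply to count how many such summands there are.

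First I would record two elementary facts about $A_{s,j}=\begin{pmatrix}a_{s,j}&b_{s,j}\\p^jM&p^{n-j}-sM\end{pmatrix}$ for $r\le j\le n-1$. Since $\det A_{s,j}=1$, i.e. $a_{s,j}(p^{n-j}-sM)-b_{s,j}p^jM=1$, the lower right entry $d_{s,j}:=p^{n-j}-sM$ is coprime to $p^jM$, hence (as $r\le j$) coprime to $p^rM$, so $\chi(d_{s,j})$ is a root of unity; and the lower left entry $p^jM$ is divisible by $p^rM$, so $A_{s,j}\in\Gamma_0(p^rM)$. Consequently, for $f\in\mathcal{S}_k(\Gamma_0(p^rM),\chi)$ the defining relation gives $f|A_{s,j}=\chi(d_{s,j})f$, whence $\bar\chi(A_{s,j})\,f|A_{s,j}=\bar\chi(d_{s,j})\chi(d_{s,j})f=|\chi(d_{s,j})|^2f=f$. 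The same observation shows that the summand does not depend on the auxiliary choice of $a_{s,j},b_{s,j}$: two choices differ by a left factor $\begin{pmatrix}1&*\\0&1\end{pmatrix}\in\Gamma_0(p^rM)$, on which $\chi$ is trivial.

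Next I would count. By the coset-representative lemma just above, $I$ together with the $A_{s,j}$ with $s\in\mathbb{Z}_p^\times/(1+p^{n-j}\mathbb{Z}_p)$ and $r\le j\le n-1$ forms a complete set of right coset representatives of $\Gamma_0(N)=\Gamma_0(p^nM)$ in $\Gamma_0(p^rM)$, so the number of $A_{s,j}$'s is
\[
\sum_{j=r}^{n-1}\bigl|\mathbb{Z}_p^\times/(1+p^{n-j}\mathbb{Z}_p)\bigr|=(p-1)\sum_{j=r}^{n-1}p^{\,n-j-1}=p^{\,n-r}-1 .
\]
Adding the leading term $f$ then yields $S_{p^n,r}(f)=f+(p^{n-r}-1)f=p^{n-r}f$, i.e. $\mathcal{S}_k(\Gamma_0(p^rM),\chi)$ lies in the $p^{n-r}$-eigenspace. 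I do not expect any genuine obstacle; the only points needing a little care are the well-definedness bookkeeping (independence of the summands from $a_{s,j},b_{s,j}$) and the fact that $\chi$ descends to a character of $\Gamma_0(p^rM)$ at all, which is exactly the standing hypothesis $r\ge c$ underlying Corollary~\ref{cor2}.
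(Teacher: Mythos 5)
Your proposal is correct and follows essentially the same route as the paper's proof: observe that each $A_{s,j}$ lies in $\Gamma_0(p^rM)$ so that $\bar\chi(A_{s,j})f|A_{s,j}=f$, then count the $p^{n-r}-1$ coset representatives besides the identity. The extra remarks on well-definedness and the explicit evaluation of the geometric sum are fine but not a different argument.
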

\begin{proof}
    Notice that $f\in \mathcal{S}_{k}(\Gamma_0(p^rM),\chi)\implies f|A_{s,j}=\chi(A_{s,j})f$. We then have $S_{p^n,r}(f)=f+\sum\limits_{j=r}^{n-1}\sum\limits_{s\in{\mathbb{Z}_p}^*/1+p^{n-j}\mathbb{Z}_p}\bar{\chi}(A_{s,j})\chi(A_{s,j})f= f+\sum\limits_{j=r}^{n-1}\sum\limits_{s\in{\mathbb{Z}_p}^*/1+p^{n-j}\mathbb{Z}_p}f=p^{n-r}f$.
\end{proof}
We also consider the operator $S_{p^n,r}'=W_{p^n}S_{p^n,r}W_{p^n}^{-1}$. Notice that from operator similarity we obtain that $S_{p^n,r}'(S_{p^n,r}'-p^{n-r})=0$. One important thing to note is that $S_{p^n,r}$ here has domain $\mathcal{S}_{k}(\Gamma_0(N),\bar{\chi}^{(p^n)}\chi^{(M)})$ since it takes arguments of the form $W_{p^n}^{-1}(f)$. 
\begin{lemma} \label{iszer2}
    Let $f\in \mathcal{S}_{k}^{new}(\Gamma_0(N),\chi)$, then $S_{p^n,r}'(f)=0$.
\end{lemma}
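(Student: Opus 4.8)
The plan is to reduce \ref{iszer2} to Proposition \ref{iszer} through the conjugation that defines $S'_{p^n,r}$. First I would recall from Atkin--Lehner--Li theory (see \cite{Li}) that for $p^n\parallel N$ the Atkin--Lehner operator $W_{p^n}$ restricts to an isomorphism $\mathcal{S}_k^{new}(\Gamma_0(N),\psi)\to\mathcal{S}_k^{new}(\Gamma_0(N),\bar{\psi}^{(p^n)}\psi^{(M)})$, carrying newforms to newforms while conjugating the $p$-part of the nebentypus. In particular, if $f\in\mathcal{S}_k^{new}(\Gamma_0(N),\chi)$ then $g:=W_{p^n}^{-1}(f)$ is again a newform, now of character $\bar{\chi}^{(p^n)}\chi^{(M)}$; this matches the remark preceding the lemma, which records that the instance of $S_{p^n,r}$ appearing inside $S'_{p^n,r}=W_{p^n}S_{p^n,r}W_{p^n}^{-1}$ must be the one defined on $\mathcal{S}_k(\Gamma_0(N),\bar{\chi}^{(p^n)}\chi^{(M)})$, i.e.\ with coefficients $\overline{\bar{\chi}^{(p^n)}\chi^{(M)}}(A_{s,j})$.

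Second, I would observe that $\bar{\chi}^{(p^n)}$ has the same conductor $p^c$ as $\chi^{(p^n)}$, so the standing hypothesis $r\ge c$ is still in force for the twisted character, and Corollary \ref{cor2} together with Proposition \ref{iszer} apply verbatim to $\mathcal{S}_k^{new}(\Gamma_0(N),\bar{\chi}^{(p^n)}\chi^{(M)})$. Indeed the proof of Proposition \ref{iszer} used only that $S_{p^n,r}=q(\mathcal{Y}_r)$ commutes with the Hecke operators $T_q$ for $q\nmid N$, that eigenforms in the new space are determined by these eigenvalues, and multiplicity one (\cite[Theorem 2]{Li}); none of this is sensitive to the choice of nebentypus. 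Hence $S_{p^n,r}(g)=0$.

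Finally, unwinding the definition gives $S'_{p^n,r}(f)=W_{p^n}\bigl(S_{p^n,r}(W_{p^n}^{-1}(f))\bigr)=W_{p^n}\bigl(S_{p^n,r}(g)\bigr)=W_{p^n}(0)=0$, which is the claim. The main obstacle is not computational but a matter of bookkeeping: one must correctly track the nebentypus under $W_{p^n}$ and invoke that $W_{p^n}$ sends newforms to newforms with the conjugated local character — precisely the content of the Atkin--Lehner--Li functional equation — so that Proposition \ref{iszer} becomes applicable to $g$ rather than to $f$ directly.
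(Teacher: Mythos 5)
Your argument coincides with the paper's proof: both use that $W_{p^n}$ carries newforms of character $\bar{\chi}^{(p^n)}\chi^{(M)}$ onto newforms of character $\chi$, write $f=W_{p^n}(g)$, and apply Proposition \ref{iszer} to $g$ so that $S'_{p^n,r}(f)=W_{p^n}(S_{p^n,r}(g))=0$. Your explicit remark that $\bar{\chi}^{(p^n)}$ has the same conductor as $\chi^{(p^n)}$, so the hypothesis $r\geq c$ persists, is a worthwhile detail the paper leaves implicit.
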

\begin{proof}
    By \cite[p. 224]{Li-Atkin}, $W_{p^n}$ and thus also $W_{p^n}^{-1}=\chi^{(p^n)}(-1)\bar{\chi}^{(M)}(p^n)W_{p^n}$ preserve newforms which implies  $W_{p^n}:\mathcal{S}_{k}^{new}(\Gamma_0(N),\bar{\chi}^{(p^n)}\chi^{(M)})\to \mathcal{S}_{k}^{new}(\Gamma_0(N),\chi)$ is onto. Writing any $f\in \mathcal{S}_{k}^{new}(\Gamma_0(N),\chi)$ as $f(z)=W_p(g)(z)$ where \\$g\in \mathcal{S}_{k}^{new}(\Gamma_0(N),\bar{\chi}^{(p^n)}\chi^{(M)})$ immediately implies the claim.
\end{proof}
\begin{lemma} \label{W_pn}
    For $c\leq r\leq n$ we have that $W_{p^n}$ maps $\mathcal{S}_{k}(\Gamma_0(p^rM),\bar{\chi}^{(p^n)}\chi^{(M)})$ onto $V(p^{n-r})\mathcal{S}_{k}(\Gamma_0(p^rM),\chi^{(p^n)}\chi^{(M)})$. 
\end{lemma}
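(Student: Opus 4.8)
The plan is to imitate the structure of Lemma \ref{lem1}, which is the $r=n-1$ (square-free-in-the-$p$-part) prototype of this statement, and to track carefully how the Atkin-Lehner matrix $\begin{pmatrix} p^n\beta & 1 \\ N\gamma & p^n \end{pmatrix}$ acts on a form of level $p^rM$. First I would fix $f\in\mathcal{S}_{k}(\Gamma_0(p^rM),\bar{\chi}^{(p^n)}\chi^{(M)})$ and compute $W_{p^n}(f)(z)=f|\begin{pmatrix} p^n\beta & 1 \\ N\gamma & p^n\end{pmatrix}(z)$ directly. The key algebraic move is to factor the defining Atkin-Lehner matrix (up to an element of $\Gamma_0(p^rM)$ acting on the left and a scalar) as a product of a matrix of the form $\begin{pmatrix} p^r\beta' & 1 \\ p^rM\gamma' & p^r\end{pmatrix}$ — a genuine Atkin-Lehner matrix at level $p^rM$ — times $\begin{pmatrix} p^{n-r} & 0 \\ 0 & 1\end{pmatrix}$, exactly as in the proof of Lemma \ref{lem1} but with the extra power $p^{n-r}$ replacing the single $p$. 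Since $\begin{pmatrix} p^r\beta' & 1 \\ p^rM\gamma' & p^r\end{pmatrix}\in\Gamma_0(p^rM)$ acts on $f$ by the scalar $(\bar{\chi}^{(p^n)}\chi^{(M)})$ of its lower-right entry, which is a unit times $p^r$, I would extract the resulting character value (this is where the $\bar{\chi}^{(p^n)}$ twists into a $\chi^{(p^n)}$ upon conjugation, matching the target character in the statement), leaving $W_{p^n}(f)$ as a constant multiple of $f|\begin{pmatrix} p^{n-r} & 0 \\ 0 & 1\end{pmatrix}(z)$, i.e. a constant times $f(p^{n-r}z)=V(p^{n-r})f(z)$.

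Next I would verify that this map is onto rather than merely into. For injectivity one notes $W_{p^n}$ is invertible (its square is a scalar, by the standard relation $W_{p^n}^{-1}=\chi^{(p^n)}(-1)\bar{\chi}^{(M)}(p^n)W_{p^n}$ already invoked in Lemma \ref{iszer2}), so it suffices to compare dimensions: $V(p^{n-r})$ is injective on cuspforms, so $\dim V(p^{n-r})\mathcal{S}_{k}(\Gamma_0(p^rM),\chi^{(p^n)}\chi^{(M)})=\dim\mathcal{S}_{k}(\Gamma_0(p^rM),\chi^{(p^n)}\chi^{(M)})$, and since complex conjugation of the character gives an isomorphism $f\mapsto\bar f$ between $\mathcal{S}_{k}(\Gamma_0(p^rM),\bar{\chi}^{(p^n)}\chi^{(M)})$ and $\mathcal{S}_{k}(\Gamma_0(p^rM),\chi^{(p^n)}\bar{\chi}^{(M)})$ (or one argues directly that conjugation by $W_{p^n}$ is a bijection on the relevant spaces), the source and target have equal dimension, so the injective linear map is onto.

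The main obstacle I anticipate is purely bookkeeping: getting the character twist exactly right. Conjugating by $W_{p^n}$ swaps the role of the upper-left and lower-right entries of $\Gamma_0$-matrices, which is precisely why $\bar{\chi}^{(p^n)}$ in the source turns into $\chi^{(p^n)}$ in the target, while the prime-to-$p$ part $\chi^{(M)}$ is unaffected because $W_{p^n}$ is the identity away from $p$; I would need to be scrupulous about whether the various powers of $p^{n-r}$ land inside the character (they do not, being $p$-adic units times powers of $p$, and $\chi^{(p^n)}$ only sees units) and about the automorphy factors $(cz+d)^{-k}$ and the determinant normalization $(\det)^{k/2}$ picking up the correct power of $p^{n-r}$, so that the scalar in front of $V(p^{n-r})f$ is genuinely nonzero and the map is well-defined between the claimed spaces. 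A secondary subtlety is ensuring the factored matrix $\begin{pmatrix} p^r\beta' & 1 \\ p^rM\gamma' & p^r\end{pmatrix}$ really does lie in $\Gamma_0(p^rM)$ with determinant $p^r$ — this requires choosing $\beta',\gamma'$ solving $p^{2r}\beta'-p^rM\gamma'=p^r$, i.e. $p^r\beta'-M\gamma'=1$, which is solvable since $(p,M)=1$ — and that the discrepancy between the genuine $W_{p^n}$-matrix and this product is absorbed by a left $\Gamma_0(p^rM)$-factor on which $f$ transforms by a character.
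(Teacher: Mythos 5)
There is a genuine gap at the central step. After pulling out $\begin{pmatrix} p^{n-r} & 0 \\ 0 & 1\end{pmatrix}$ from the $W_{p^n}$-matrix, the residual factor $\begin{pmatrix} p^r\beta' & 1 \\ p^rM\gamma' & p^r\end{pmatrix}$ has determinant $p^{2r}\beta'-p^rM\gamma'=p^r(p^r\beta'-M\gamma')=p^r$, so for $r\geq 1$ it is \emph{not} an element of $\Gamma_0(p^rM)$ (you even ask it to ``lie in $\Gamma_0(p^rM)$ with determinant $p^r$'', which is contradictory, since $\Gamma_0(p^rM)$ consists of determinant-one matrices), and it does not act on $f$ by a character value. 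It is the Atkin--Lehner matrix $W_{p^r}$ at level $p^rM$, and $f|W_{p^r}$ is a genuinely new form, not a scalar multiple of $f$. This matters doubly: had your computation gone through, $W_{p^n}f$ would be a constant times $f(p^{n-r}z)$, which lies in $V(p^{n-r})\mathcal{S}_k(\Gamma_0(p^rM),\bar\chi^{(p^n)}\chi^{(M)})$ --- the \emph{wrong} character space. The mechanism by which $\bar\chi^{(p^n)}$ becomes $\chi^{(p^n)}$ is precisely that $W_{p^r}$ intertwines the two spaces; it cannot be produced by evaluating a character on a $\Gamma_0$-element. Your prototype, Lemma \ref{lem1}, is the degenerate case where the residual matrix has determinant $1$ and the form has level prime to $p$; that is exactly the feature that fails to generalize to $r\geq 1$.

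The paper's proof does what your first step should do: it shows $W_{p^n}(f)(z)=p^{(n-r)k/2}\,(f|W_{p^r})(p^{n-r}z)$ and then uses that $W_{p^r}$ maps $\mathcal{S}_k(\Gamma_0(p^rM),\bar\chi^{(p^n)}\chi^{(M)})$ onto $\mathcal{S}_k(\Gamma_0(p^rM),\chi^{(p^n)}\chi^{(M)})$, with inverse controlled by the identity $f|W_{p^r}|W_{p^r}=\chi^{(p^n)}(-1)\chi^{(M)}(p^r)f$; surjectivity is then exhibited by the explicit preimage $p^{(r-n)k/2}g'$ with $g'=\chi^{(p^n)}(-1)\bar\chi^{(M)}(p^r)\,g|W_{p^r}$. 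Your fallback surjectivity argument by dimension count also has a flaw as written: complex conjugation identifies $\mathcal{S}_k(\Gamma_0(p^rM),\psi)$ with $\mathcal{S}_k(\Gamma_0(p^rM),\bar\psi)$, so applied to $\psi=\bar\chi^{(p^n)}\chi^{(M)}$ it lands in the $\chi^{(p^n)}\bar\chi^{(M)}$ space, not the $\chi^{(p^n)}\chi^{(M)}$ space you need; the clean way to get the required equality of dimensions is again the involution $W_{p^r}$ itself, at which point the explicit preimage is already in hand and the dimension count is superfluous.
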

\begin{proof}
	For any $f\in \mathcal{S}_{k}(\Gamma_0(p^rM),\bar{\chi}^{(p^n)}\chi^{(M)})$ we obtain that $W_{p^n}(f)=\\p^{\frac{nk}{2}}(p^rM\gamma(p^{n-r}z)+p^n)^{-k}f(\frac{p^r\beta(p^{n-r}z)+1}{p^rM\gamma(p^{n-r}z)+p^n})=p^{\frac{(n-r)k}{2}}f|W_{p^r}(p^{n-r}z)$. It remains to show that $\forall g\in \mathcal{S}_{k}(\Gamma_0(p^rM),\chi^{(p^n)}\chi^{(M)})$, there exists \\$g'\in \mathcal{S}_{k}(\Gamma_0(p^rM),\bar{\chi}^{(p^n)}\chi^{(M)})$ such that $g(z)=W_{p^r}(g)(z)$. We thus set $g'(z)=\chi^{(p^n)}(-1)\bar{\chi}^{(M)}(p^r)g|W_{p^r}(z)$ and the claim $g(z)=W_{p^r}(g)(z)$ follows from the identity $f|W_{p^r}|W_{p^r}=\chi^{(p^n)}(-1)\chi^{(M)}(p^r)f$. Applying the above calculation then: $W_{p^n}(p^{\frac{(r-n)k}{2}}g')(z)=g'|W_{p^r}(p^{n-r}z)=g(p^{n-r}z)$ for any choice of $g\in \mathcal{S}_{k}(\Gamma_0(p^rM),\chi^{(p^n)}\chi^{(M)})$ proving our map is onto.
\end{proof}

Since the oldspace can be decomposed as: \[\mathcal{S}^{old}_{k}(\Gamma_0(N),\chi)=\sum\limits_{p|N: \chi^{(p^n)}\text{non-primitive}}\mathcal{S}_{k}(\Gamma_0(N/p),\chi^{(N/p)})\oplus V(p)\mathcal{S}_{k}(N/p),\chi^{(N/p)})\] we only need to consider $S_{p^n,n-1}(f)=f+\sum\limits_{s\in{\mathbb{Z}_p}^*/1+p\mathbb{Z}_p}\bar{\chi}(A_s)f|A_s$, where $A_s\in SL_2(\mathbb{Z})$ is any matrix of the form $\begin{pmatrix}
    a_{s} & b_{s} \\
    p^{n-1}M & p-sM
\end{pmatrix}$.
\begin{cor}
    Lemma \ref{W_pn} implies that $V(p)\mathcal{S}_{k}(\Gamma_0(p^{n-1}M),\chi)$ is contained in the $p$-eigenspace of $S_{p^n,n-1}'$.
\end{cor}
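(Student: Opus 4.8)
The plan is to mimic, one level higher, the argument used for Corollary~\ref{corol}. Fix $h\in V(p)\mathcal{S}_{k}(\Gamma_0(p^{n-1}M),\chi)$; since $p^{n-1}M\mid N$ and $V(p)$ raises the level by $p$, the form $h$ lies in $\mathcal{S}_{k}(\Gamma_0(N),\chi)$, which is the domain of $S_{p^n,n-1}'$. Recall also that throughout this section $\chi^{(p^n)}$ is imprimitive, of conductor $p^{c}$ with $c\le n-1$, so that $\bar{\chi}^{(p^n)}\chi^{(M)}$ is genuinely a character of $\Gamma_0(p^{n-1}M)$.

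First I would apply Lemma~\ref{W_pn} with $r=n-1$: it says $W_{p^n}$ carries $\mathcal{S}_{k}(\Gamma_0(p^{n-1}M),\bar{\chi}^{(p^n)}\chi^{(M)})$ onto $V(p)\mathcal{S}_{k}(\Gamma_0(p^{n-1}M),\chi)$. Hence I may write $h=W_{p^n}(g')$ with $g'\in\mathcal{S}_{k}(\Gamma_0(p^{n-1}M),\bar{\chi}^{(p^n)}\chi^{(M)})$, so $W_{p^n}^{-1}(h)=g'$. Next I would invoke the lemma preceding Lemma~\ref{W_pn} --- namely that $\mathcal{S}_{k}(\Gamma_0(p^{r}M),\cdot)$ lies in the $p^{n-r}$-eigenspace of $S_{p^n,r}$ --- with $r=n-1$ and with $\bar{\chi}^{(p^n)}\chi^{(M)}$ playing the role of the nebentypus, which gives $S_{p^n,n-1}(g')=p\,g'$ (its proof only uses $g'|A_{s}=(\bar{\chi}^{(p^n)}\chi^{(M)})(A_{s})\,g'$). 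Conjugating back, $S_{p^n,n-1}'(h)=W_{p^n}S_{p^n,n-1}W_{p^n}^{-1}(h)=W_{p^n}(p\,g')=p\,h$, which is exactly the assertion.

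The only real point requiring care --- and the main, if minor, obstacle --- is the bookkeeping of characters: one must make sure that the copy of $S_{p^n,n-1}$ sitting inside $S_{p^n,n-1}'$ is the operator attached to $\bar{\chi}^{(p^n)}\chi^{(M)}$ (this is the meaning of the earlier remark that $S_{p^n,r}'$ ``takes arguments of the form $W_{p^n}^{-1}(f)$''), that this character genuinely descends to $\Gamma_0(p^{n-1}M)$ --- which is precisely where the imprimitivity of $\chi^{(p^n)}$, i.e.\ the existence of oldforms at $p$, is used --- and that the eigenspace lemma is stated for an arbitrary nebentypus so that it applies here. Once these are in place the computation above goes through verbatim.
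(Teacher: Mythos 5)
Your proposal is correct and is exactly the argument the paper intends: the corollary is stated without proof, and your chain --- Lemma~\ref{W_pn} with $r=n-1$ to write $h=W_{p^n}(g')$, the preceding eigenvalue lemma (valid for the nebentypus $\bar{\chi}^{(p^n)}\chi^{(M)}$, since its proof only uses $g'|A_s=\chi(A_s)g'$) to get $S_{p^n,n-1}(g')=pg'$, then conjugation --- is the direct analogue of Corollary~\ref{corol} in the square-free case. Your remarks on the character bookkeeping match the paper's own caveat that the inner $S_{p^n,n-1}$ acts on forms with nebentypus $\bar{\chi}^{(p^n)}\chi^{(M)}$, so nothing is missing.
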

We now establish again the Hermitian property for $S_{p^n,n-1}$ and the pseudo-Hermitian property for $S_{p^n,n-1}'$.
\begin{prop}
    The operator $S_{p^n,n-1}$ is Hermitian and thus $S_{p^n,n-1}'$ is Hermitian.
\end{prop}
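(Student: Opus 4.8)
The plan is to mirror the proof of Proposition \ref{herm} and work adelically. Recall from section \ref{AdeltoCl} that $S_{p^n,n-1}=q(\mathcal{Y}_{n-1}^{\chi})$ for the element $\mathcal{Y}_{n-1}^{\chi}=\mathcal{V}_{n-1}^{\chi}+\mathcal{V}_n^{\chi}$ of $\mathcal{H}(K/\!\!/K_0(p^n),\bar\chi_{id})$, and that Gelbart's isometry $\langle\Phi_f,\Phi_g\rangle_p=\langle f,g\rangle$ (\cite[p.88]{Gel}) turns the Petersson product into the $p$-adic one. Thus it suffices to show that the convolution operator $\pi_R(\mathcal{Y}_{n-1}^{\chi})$ on $A_k(N,\chi_{id})$ is self-adjoint for $\langle\cdot,\cdot\rangle_p$. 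I would first record the general identity $\langle\pi_R(\Phi)a,b\rangle_p=\langle a,\pi_R(\Phi^{\vee})b\rangle_p$, where $\Phi^{\vee}(g)=\overline{\Phi(g^{-1})}$; this is a one-line change of variables in the defining integral, using unimodularity of Haar measure, and one checks that the complex conjugation is exactly what makes $\Phi^{\vee}$ land back in $\mathcal{H}(K/\!\!/K_0(p^n),\bar\chi_{id})$.

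The crux is then to verify $(\mathcal{Y}_{n-1}^{\chi})^{\vee}=\mathcal{Y}_{n-1}^{\chi}$. The summand $\mathcal{V}_n^{\chi}$ is the identity and is fixed by $\vee$. For $\mathcal{V}_{n-1}^{\chi}$, supported on $K_0(p^n)y(p^{n-1})K_0(p^n)$, I would use the conjugation identity $y(p^{n-1})^{-1}=y(-p^{n-1})=\mathrm{diag}(-1,1)\,y(p^{n-1})\,\mathrm{diag}(-1,1)$ with $\mathrm{diag}(-1,1)\in K_0(p^n)$ (the identity already exploited in section \ref{HeckeAlg}). This shows the double coset is stable under $g\mapsto g^{-1}$, so $(\mathcal{V}_{n-1}^{\chi})^{\vee}$ is supported on the same coset and transforms under $K_0(p^n)$ by the same character $\bar\chi_{id}$; evaluating at $y(p^{n-1})$ and using $\bar\chi_{id}(\mathrm{diag}(-1,1))=\bar\chi_{id}(1)=1$ (we extend by the lower-right entry) yields $(\mathcal{V}_{n-1}^{\chi})^{\vee}(y(p^{n-1}))=\overline{\mathcal{V}_{n-1}^{\chi}(y(p^{n-1}))}=1=\mathcal{V}_{n-1}^{\chi}(y(p^{n-1}))$. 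Since an element supported on one double coset is determined by its value at the representative, $(\mathcal{V}_{n-1}^{\chi})^{\vee}=\mathcal{V}_{n-1}^{\chi}$, hence $(\mathcal{Y}_{n-1}^{\chi})^{\vee}=\mathcal{Y}_{n-1}^{\chi}$, and de-adelizing gives that $S_{p^n,n-1}$ is Hermitian.

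For the conjugate $S_{p^n,n-1}'=W_{p^n}S_{p^n,n-1}W_{p^n}^{-1}$ I would argue as for $Q_p'$ in Proposition \ref{herm}: the inner copy of $S_{p^n,n-1}$ is the operator attached to the character $\bar\chi^{(p^n)}\chi^{(M)}$, and the previous paragraph applies verbatim to show it is self-adjoint on $\mathcal{S}_k(\Gamma_0(N),\bar\chi^{(p^n)}\chi^{(M)})$. Since $W_{p^n}$ is an isometry of Petersson products, $W_{p^n}^{*}$ is a scalar multiple of $W_{p^n}^{-1}$ with the scalar of modulus one (by Lemma \ref{iszer2} and Atkin-Lehner-Li theory \cite{Li-Atkin}), so the chain $\langle S_{p^n,n-1}'f,g\rangle=\langle S_{p^n,n-1}W_{p^n}^{-1}f,W_{p^n}^{-1}g\rangle=\langle W_{p^n}^{-1}f,S_{p^n,n-1}W_{p^n}^{-1}g\rangle=\langle f,S_{p^n,n-1}'g\rangle$ finishes the proof, the unimodular scalar cancelling.

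The step I expect to require the most care is not deep but delicate bookkeeping: confirming that $\vee$ respects the character twist (forcing the conjugation in $\Phi^{\vee}$ and hence $\bar\chi_{id}$ rather than $\chi_{id}$ in the transformation laws), and, in the $S_{p^n,n-1}'$ step, that the intermediate space genuinely carries the nebentypus $\bar\chi^{(p^n)}\chi^{(M)}$ and that $W_{p^n}^{*}$ differs from $W_{p^n}^{-1}$ only by a unit scalar so it drops out. A purely classical sanity check is also available -- write $S_{p^n,n-1}(f)=f+\sum_{s}\bar\chi(A_s)f|A_s$ with $A_s\in SL_2(\mathbb{Z})$, use $\langle f|A_s,g\rangle=\langle f,g|A_s^{-1}\rangle$, and match $\{A_s^{-1}\}$ with $\{A_s\}$ modulo $\Gamma_0(N)$ via the transformation law of forms in $\mathcal{S}_k(\Gamma_0(N),\chi)$ -- and it is precisely this coset matching that the adelic computation packages cleanly.
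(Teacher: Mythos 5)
Your argument is correct, but your main route is genuinely different from the paper's. The paper proves this proposition \emph{classically}: it quotes from \cite{BarSom} that for each coset representative $A_s$ there is a unique $A_t$ with $A_tA_s=\gamma_s\in\Gamma_0(N)$, writes $A_s^{-1}=\gamma_s^{-1}A_t$, and uses the nebentypus relation $\chi(\gamma_s^{-1})=\bar\chi(A_s)\bar\chi(A_t)$ to see that the adjoint of $\sum_s\bar\chi(A_s)f|A_s$ is the same sum reindexed by $t$ --- exactly the ``coset matching'' you relegate to a sanity check in your last paragraph. You instead lift the whole question to the Hecke algebra and verify $(\mathcal{Y}_{n-1}^{\chi})^{\vee}=\mathcal{Y}_{n-1}^{\chi}$ under the standard involution $\Phi^{\vee}(g)=\overline{\Phi(g^{-1})}$, using the stability of $K_0(p^n)y(p^{n-1})K_0(p^n)$ under inversion via conjugation by $\mathrm{diag}(-1,1)$ and the fact that a bi-equivariant function on a single double coset is determined by its value at the representative. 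That computation is sound (the unitarity of $\chi$ is what keeps $\Phi^{\vee}$ in the $\bar\chi_{id}$-algebra, as you note), and it is more in the spirit of the paper's Proposition \ref{herm} for $Q_p$, which is also proved $p$-adically; what it buys is that the self-adjointness of every $S_{p^n,r}$ (not just $r=n-1$) follows uniformly from the inversion-stability of the double cosets, whereas the paper's classical argument is shorter and avoids having to justify the adjoint formula for convolution operators and the transport of the inner product through $q$. Two small points to tighten: your citation of Lemma \ref{iszer2} for the unitarity of $W_{p^n}$ is misplaced (that lemma is about preservation of newforms; the isometry statement is from Atkin--Lehner--Li directly), and you should make explicit, as the paper does, that the inner $S_{p^n,n-1}$ in $S_{p^n,n-1}'=W_{p^n}S_{p^n,n-1}W_{p^n}^{-1}$ acts on $\mathcal{S}_k(\Gamma_0(N),\bar\chi^{(p^n)}\chi^{(M)})$, so the self-adjointness you invoke there is for the operator attached to that character.
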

\begin{proof}
    Consider $S_{p^n,n-1}(f)=f+\sum_{s\in{\mathbb{Z}_p}^*/1+p\mathbb{Z}_p}\bar{\chi}(A_s)f|A_s$. From the proof of \cite[Proposition 5.14]{BarSom} we know that $\forall s\in{\mathbb{Z}_p}^*/1+p\mathbb{Z}_p,\exists !t\in{\mathbb{Z}_p}^*/1+p\mathbb{Z}_p:A_tA_s=\gamma_s\in\Gamma_0(N)$. Then $\langle S_{p^n,n-1}(f),g\rangle=\sum\limits_{s\in{\mathbb{Z}_p}^*/1+p\mathbb{Z}_p}\bar{\chi}(A_s)\langle f|A_s,g\rangle=\sum\limits_{s\in{\mathbb{Z}_p}^*/1+p\mathbb{Z}_p}\langle f,\chi(A_s)g|A_s^{-1}\rangle=\sum\limits_{s\in{\mathbb{Z}_p}^*/1+p\mathbb{Z}_p}\langle f,\chi(A_s)g|\gamma_s^{-1}A_t\rangle\overset{\chi(\gamma_s^{-1})=\bar{\chi}(A_s)\bar{\chi}(A_t)}{=}\sum\limits_{s\in{\mathbb{Z}_p}^*/1+p\mathbb{Z}_p}\langle f,\bar{\chi}(A_t)g|A_t\rangle= \langle f,S_{p^n,n-1}(g)\rangle$. \par
    The proof for $S_{p^n,n-1}$ follows in the same way as $Q_p'$. 
\end{proof}
We are finally in position to prove \textbf{Theorem} \ref{mainth}:
\begin{proof}
    Let $p|N$ be prime. If $p^2\nmid N$ we repeat the proof of Theorem \ref{sqfr}. Otherwise from Proposition \ref{iszer} and Lemma \ref{iszer2} we know that $f\in \mathcal{S}_{k}^{new}(\Gamma_0(N),\chi) \implies S_{p^n,n-1}(f)=S'_{p^n,n-1}(f)=0$. In the other direction, suppose $S_{p^n,n-1}(f)=S'_{p^n,n-1}(f)=0$ for every prime such that $p^2|N$. We know by the Hermitian property of $S_{p^n,n-1}$ that $f$ is orthogonal to it's $p$-eigenspace and similarly for the $p$-eigenspace of $S'_{p^n,n-1}$. But since the $p$-eigenspace of $S_{p^n,n-1}$ contains $\mathcal{S}_{k}(\Gamma_0(p^{n-1}M),\chi)$ and similarly the $p$-eigenspace of $S_{p^n,n-1}'$ contains $V(p)\mathcal{S}_{k}(\Gamma_0(p^{n-1}M),\chi)$, we have that $f$ is orthogonal to $\mathcal{S}_{k}(\Gamma_0(p^{n-1}M),\chi)\oplus V(p)\mathcal{S}_{k}(\Gamma_0(p^{n-1}M),\chi)$ for every divisor of $N$, including the square-free primes from Theorem \ref{sqfr}. This concludes the proof.
\end{proof}

\end{document}